\date{}
\title{New sufficient condition for the two-dimensional real Jacobian conjecture through the Newton diagram}
\author{ Yuzhou Tian$^{\text{a}}$ and Xiuli Cen$^{\text{b},}$\footnote{Corresponding author. }\\
	\it\footnotesize Department of Mathematical Sciences, Tsinghua University, Beijing 100084, China$^{\text{a}}$, \\
	\it\footnotesize E-mail: tianyuzhou2016@163.com\\
	\it\footnotesize School of Mathematics and Statistics, HNP-LAMA, Central South University, Changsha,
Hunan 410083, China$^{\text{b}}$\\
	\it\footnotesize E-mail: cenxiuli2010@163.com}
\newtheorem {theorem*}{Theorem}
\newtheorem {theorem} {Theorem}
\newtheorem{example}{Example}
\newtheorem{proposition}{Proposition}
\newtheorem{lemma}{Lemma}
\newtheorem{corollary}{Corollary}
\newtheorem{remark}{Remark}
\numberwithin{equation}{section}
\begin{document}
\maketitle
\noindent {\bf Abstract} The present paper is devoted to investigating the two-dimensional real Jacobian conjecture. This conjecture claims that if $F=\left(f,g\right):\mathbb{R}^2\rightarrow \mathbb{R}^2$ is a polynomial map with $\det DF\left(x,y\right)\ne0$ for all  $\left(x,y\right)\in\mathbb{R}^2$, then $F$ is globally injective.  With the help of the Newton diagram, we provide a new sufficient condition such that the two-dimensional real Jacobian conjecture holds.  Moreover, this sufficient condition generalizes the main result of [J. Differential Equations {\bf 260} (2016), 5250-5258]. Furthermore, two new classes of polynomial maps satisfying the two-dimensional real Jacobian conjecture are given.

\smallskip

\noindent {\bf 2020 Math Subject Classification: } Primary  34C05.  Secondary 34C08.  Tertiary 14R15

\smallskip

\noindent {\bf Keywords:} {Real Jacobian conjecture; Newton diagram; Bendixson compactification; Monodromic singular point}

\section{Introduction and statement of the main result}\label{se-1}
Consider a smooth map
\begin{align*}
&F\left(x,y\right)=\left(f\left(x,y\right),g\left(x,y\right)\right):\mathbb{R}^2\rightarrow\mathbb{R}^2
\end{align*}
with $\det DF\left(x,y\right)\neq0$  for all $(x,y)\in\mathbb{R}^2$, where $f\left(x,y\right)$ and $g\left(x,y\right)$ are smooth functions.  As we know, the smooth map $F$ is a local diffeomorphism, but
it is not always globally injective in $\mathbb{R}^2$.  Generally, the smooth map $F$ is a global diffeomorphism under some restricted conditions,  %{\textcolor{red}{In fact, the smooth map $F$ is a global diffeomorphism only when some appropriate conditions are satisfied}},
see \cite{cobo2002injectivity,fernandes2004global,Braun2017On}.

 In 1939, Keller restricted his attention to the polynomial map $F$ in $\mathbb{C}^2$ and proposed a conjecture:  if $F:\mathbb{C}^2\rightarrow\mathbb{C}^2$ is a polynomial map such that  $\det DF\left(x,y\right)\equiv\text{constant}\neq0$ for all  $\left(x,y\right)\in\mathbb{C}^2$, then $F$ is globally injective.  This conjecture  is well-known \emph{Jacobian conjecture}.  In 1998, Smale \cite{MR1631413} selected $18$ great mathematical problems for the 21th century, and listed Jacobian conjecture as the 16th  problem.  Up to now, it is still unsettled  and is notoriously difficult.  Fortunately, there has been quite a bit of progress, see for example  \cite{bass1982jacobian, MR714106, MR714105,MR1487631,MR3866897, MR4242820}, etc.

Randall \cite{MR713265} considered the polynomial map $F$ in $\mathbb{R}^2$ and he gave another  conjecture:
if $F:\mathbb{R}^2\rightarrow\mathbb{R}^2$ is a polynomial map such that $\det DF\left(x,y\right)\ne0$ for all  $\left(x,y\right)\in\mathbb{R}^2$, then $F$ is globally injective.  This conjecture is known as \emph{real Jacobian conjecture}. However,  Pinchuk \cite{MR1292168} in 1994 constructed a counterexample for the real Jacobian conjecture.  Pinchuck's  counterexample leads to an interesting problem, i.e., to  find  some suitable  conditions  such that the real Jacobian conjecture holds.  This  problem has gained wide attention  from mathematicians. A stream of important results have been obtained in recent years. With the help of the structure of polynomial maps, the authors \cite{cima1995global,MR1362759} gave some sufficient conditions.  Gwo\'{z}dziewicz in \cite{MR1839866} showed that the real Jacobian conjecture holds if $\text{deg}\;f\leq3$ and $\text{deg}\;g\leq3$.  Afterwards Braun et al. \cite{MR2552779,MR3514314} improved this result and proved that the conjecture is correct if $\text{deg}\;f\leq4$, independent of $\text{deg}\;g$.
In the above works, the main tools involve algebra, analysis and geometry. The following global dynamical result established by Sabatini in \cite{MR1636592} allows us to explore the real Jacobian conjecture by some dynamical systems  techniques.
\begin{theorem}\label{th-1}{\rm (see \cite{MR1636592})}
	Let $F=\left(f,g\right):\mathbb{R}^2\rightarrow \mathbb{R}^2$ be a polynomial map such that $F\left(0,0\right)=\left(0,0\right)$ and $\det DF\left(x,y\right)\ne0$ for all  $\left(x,y\right)\in\mathbb{R}^2$. Then the following statements are equivalent.
	\begin{itemize}
		\item [\emph{(a)}] The origin is a global center for the Hamiltonian polynomial vector field
		\begin{align}\label{eq-1}
			&\mathcal{X}=\left(-ff_y-gg_y,ff_x+gg_x\right)\triangleq\left(\Lambda\left(x,y\right),\Omega\left(x,y\right)\right).
		\end{align}
		\item [\emph{(b)}] $F$ is a global diffeomorphism of the plane onto itself.
	\end{itemize}
\end{theorem}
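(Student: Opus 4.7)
The plan is to recognize \eqref{eq-1} as the Hamiltonian system associated with the first integral
\[
H(x,y)=\tfrac12\bigl(f(x,y)^2+g(x,y)^2\bigr),
\]
since $\partial_x H=ff_x+gg_x=\Omega$ and $-\partial_y H=-(ff_y+gg_y)=\Lambda$. The equilibria of $\mathcal{X}$ are the zeros of $\nabla H$ and solve the linear system $DF(x,y)^{\top}(f,g)^{\top}=0$; the hypothesis $\det DF\ne 0$ forces $(f,g)=(0,0)$, so the equilibrium set of $\mathcal{X}$ coincides with $F^{-1}(0,0)$ and always contains the origin.

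For the direction (b)$\Rightarrow$(a), I would observe that a global diffeomorphism of $\mathbb{R}^2$ is proper, so $|F(x,y)|\to\infty$ as $|(x,y)|\to\infty$; consequently $H$ is proper and attains a strict global minimum only at the origin. Every level set $\{H=c\}$ with $c>0$ is then a compact invariant set of $\mathcal{X}$ free of equilibria, so any orbit in it is a periodic orbit surrounding the origin. This is exactly the global-center property.

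For (a)$\Rightarrow$(b), the heart of the argument is to upgrade the global-center hypothesis to properness of $F$. Writing $D_p$ for the bounded open region enclosed by the periodic orbit $\gamma_p$ through a non-origin point $p$, the family $\{D_p\}$ is totally ordered by inclusion around the origin, since distinct orbits are disjoint, and the only critical point of $H$ inside $\overline{D_p}$ is the origin, a local minimum, so $\max_{\overline{D_p}}H=H(p)$. If $H$ were not proper, there would be a sequence $p_n\to\infty$ with $H(p_n)\le M$; any $q\ne 0$ outside $\bigcup_n D_{p_n}$ would satisfy $D_{p_n}\subseteq D_q$ for every $n$, forcing the sequence $(p_n)$ into the bounded set $\overline{D_q}$, which is absurd. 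Hence $\bigcup_n D_{p_n}=\mathbb{R}^2$ and $H\le M$ on the whole plane, making $f^2+g^2$ a bounded polynomial and therefore $f$ and $g$ constant, contradicting $\det DF\ne 0$. So $H$, and hence $F$, is proper. Being a proper local diffeomorphism between connected $2$-manifolds, $F$ is a covering map; simple connectivity of $\mathbb{R}^2$ makes the covering trivial, and since $F^{-1}(0,0)$ contains exactly one point the covering has a single sheet, so $F$ is a global diffeomorphism.

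The main obstacle I anticipate is precisely the properness step in (a)$\Rightarrow$(b): the global-center hypothesis is local in character (every non-trivial orbit is periodic), while properness concerns behavior at infinity, and bridging the gap requires two non-trivial ingredients, namely the planar Jordan-curve nesting of the periodic orbits around the origin, and the rigidity provided by the polynomial hypothesis through the fact that no non-constant polynomial is bounded on $\mathbb{R}^2$. Without polynomiality this direction would genuinely fail, which is why the theorem is stated in the polynomial category. Once properness is in hand, the covering-space conclusion is standard, and the reverse implication (b)$\Rightarrow$(a) is essentially immediate from the Hamiltonian structure.
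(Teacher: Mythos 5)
The paper does not prove this statement at all: Theorem \ref{th-1} is quoted from Sabatini \cite{MR1636592} and used as a black box, so there is no internal proof to compare against. Your argument is, however, essentially a correct self-contained proof in the spirit of Sabatini's original one: you correctly identify $H=\tfrac12(f^2+g^2)$ as the Hamiltonian, observe that the singular points of $\mathcal{X}$ are exactly $F^{-1}(0,0)$ because $\det DF\ne 0$ forces $DF^{\top}(f,g)^{\top}=0$ to have only the trivial solution, and the two implications go through: (b)$\Rightarrow$(a) via properness of a global homeomorphism and regular compact level sets, and (a)$\Rightarrow$(b) via the nesting of the periodic orbits, properness of $H$, and the proper-local-diffeomorphism-is-a-covering argument with the fiber over $(0,0)$ being the single equilibrium. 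Two small steps deserve explicit justification. First, in (b)$\Rightarrow$(a) the claim that each closed orbit \emph{surrounds the origin} needs the standard index (or Poincar\'e--Bendixson) fact that a periodic orbit of a planar vector field encloses a singular point, the origin being the only one. Second, at the end of the properness argument, boundedness of the polynomial $f^2+g^2$ gives that it is \emph{constant}, but "therefore $f$ and $g$ constant" is not immediate from that alone; the clean finish is that $f^2+g^2\equiv c$ yields $DF^{\top}(f,g)^{\top}\equiv 0$, hence $(f,g)\equiv(0,0)$ by $\det DF\ne0$, so $\mathcal{X}\equiv 0$, contradicting the global center (or contradicting $\det DF\ne 0$ directly). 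You should also say one word on why $F$ is onto (the image is open since $F$ is a local diffeomorphism and closed since $F$ is proper), which is what makes the covering argument over all of $\mathbb{R}^2$ legitimate. With these routine insertions your proof is complete; it is a reasonable reconstruction of the cited result rather than a different route from anything in this paper.
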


Following Theorem \ref{th-1},  a lot of novel sufficient conditions for the validity of the real Jacobian conjecture are obtained by using the qualitative theory of dynamical systems.
For instance, the authors  in \cite{MR3448779} gave an important result as follows.
\begin{theorem}[see \cite{MR3448779}]\label{th3}
Let $F=\left(f,g\right):\mathbb{R}^2\rightarrow \mathbb{R}^2$ be a polynomial map such that $F\left(0,0\right)=\left(0,0\right)$ and $\det DF\left(x,y\right)\ne0$ for all  $\left(x,y\right)\in\mathbb{R}^2$. If the higher homogeneous terms of the polynomials $ff_x+gg_x$ and $ff_y+gg_y$ do not have real linear factors in common, then $F$ is a global injective.
\end{theorem}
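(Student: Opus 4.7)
My plan is to invoke Theorem \ref{th-1} and reduce the problem to showing that the origin is a global center for the Hamiltonian vector field $\mathcal{X}$, which has Hamiltonian $H(x,y)=\tfrac{1}{2}\bigl(f(x,y)^2+g(x,y)^2\bigr)$ since $\mathcal{X}=(-H_y,H_x)$. Every trajectory of $\mathcal{X}$ is contained in a level set of $H$, so the global center property is equivalent to each level curve $\{H=c\}$ with $c>0$ being a Jordan curve around the origin traced out by a single periodic orbit.

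I would first verify that the origin is a local center. A direct computation gives $\nabla H=(f,g)\cdot DF$; since $\det DF$ never vanishes, the critical points of $H$ coincide with $F^{-1}(0,0)$, and at each such point the Hessian of $H$ equals $(DF)^{\top}DF$, hence is positive definite. Therefore the origin is a strict local minimum of $H$, so sufficiently small level curves are smooth ovals and the origin is a local center of $\mathcal{X}$.

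The core of the argument is to show that no trajectory of $\mathcal{X}$ escapes to infinity. I would apply the Poincar\'e (or Bendixson) compactification to $\mathcal{X}$ and study its singularities on the line at infinity. In each chart of the compactification these singularities are controlled by the top homogeneous parts $\Omega_d$ and $\Lambda_d$ of $\Omega=ff_x+gg_x$ and $\Lambda=-ff_y-gg_y$. The hypothesis that $\Omega_d$ and $\Lambda_d$ share no common real linear factor translates into the statement that on the equator the compactified vector field has only isolated singularities, none of which admits a characteristic orbit arriving from the interior disk. Consequently every semi-orbit of $\mathcal{X}$ is bounded.

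With all orbits bounded and $\mathcal{X}$ Hamiltonian (so there are no limit cycles, sources, or sinks), the Poincar\'e--Bendixson theorem forces every $\omega$- and $\alpha$-limit set of a regular orbit to be a union of singularities together with possible connecting separatrices. But every singularity of $\mathcal{X}$ is a strict local minimum of $H$, so it cannot lie in the limit set of a regular orbit; hence every orbit of $\mathcal{X}$ is periodic and surrounds some critical point of $H$. A standard topological argument on nested periodic orbits of Hamiltonian planar fields then identifies the origin as the unique critical point of $H$ and exhibits the periodic orbits as a foliation of $\mathbb{R}^2\setminus\{(0,0)\}$, yielding the global center. The main obstacle, and the place where the algebraic hypothesis is genuinely used, is the infinity analysis: converting the no-common-real-linear-factor condition on $\Omega_d$ and $\Lambda_d$ into the dynamical statement that orbits cannot reach the equator requires a careful case-by-case inspection of the singularities of the compactified field in each chart and of the characteristic directions entering the interior.
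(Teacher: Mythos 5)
Your reduction via Theorem \ref{th-1} and the local analysis are fine: with $H=\tfrac12\left(f^2+g^2\right)$ one has $\nabla H=(f,g)\,DF$, so the critical points of $H$ are exactly $F^{-1}(0,0)$, the Hessian there is $(DF)^{\top}DF$, and the origin is a strict local minimum, hence a local center. The genuine gap is the step you yourself flag as ``the main obstacle'': you assert, but do not prove, that the hypothesis implies that no equatorial singularity of the compactified field admits a characteristic orbit coming from the finite plane, hence that all orbits of $\mathcal{X}$ are bounded. That assertion is essentially the whole content of the theorem, and it is not a routine translation of the algebraic condition. Indeed, the hypothesis does not even exclude singular points at infinity: for $\mathcal{X}=(-H_y,H_x)$ of degree $d$ the infinite singular points correspond to the real linear factors of $x\Omega_d-y\Lambda_d$, which by Euler's identity is a multiple of the top homogeneous part $H_{2d'}$ of $H$ only in the balanced case $\deg H_x=\deg H_y$; there the hypothesis kills all real factors because $H_{2d'}$ is a nonnegative form, so its real factors would be common factors of both partials. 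But when $\deg H_x\neq\deg H_y$ (equivalently $H_{2d'}=ax^{2d'}$ or $by^{2d'}$, the situations treated as Cases 2 and 3 in the paper's proof of Theorem \ref{th2}), the two ``higher homogeneous terms'' in the hypothesis have different degrees, there \emph{is} a degenerate singular point at infinity, and excluding characteristic orbits (parabolic sectors) at it requires desingularization or a monodromy criterion --- precisely the case-by-case work you defer. Without it, the claim ``every semi-orbit of $\mathcal{X}$ is bounded'' is unsupported, and the subsequent Poincar\'e--Bendixson argument has nothing to stand on. (Your final nested-orbits paragraph is also only sketched --- one must still show the period annulus of the origin exhausts $\mathbb{R}^2\setminus\{(0,0)\}$ even though $F^{-1}(0,0)$ could a priori contain other points --- but that part is standard; the infinity analysis is the essential defect.)

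For comparison, the paper does not obtain Theorem \ref{th3} through boundedness of orbits at all: Theorem \ref{th2} translates the no-common-real-linear-factor hypothesis into the condition that every bounded edge of the Newton diagram of the Bendixson compactification $b\left(\mathcal{X}\right)$ has an associated Hamiltonian free of factors $v^{t_1}-\lambda u^{t_2}$ (its three cases are exactly the balanced and unbalanced situations above), Theorems \ref{th-9} and \ref{th-12} then yield monodromy of the origin of $b\left(\mathcal{X}\right)$, and Theorem \ref{th-3} converts monodromy into global injectivity. If you want to complete your route instead, you must actually carry out the local study of the degenerate equatorial singularities (blow-ups, or a criterion such as Theorem \ref{th-12}) in the unbalanced cases; that is where the hypothesis is genuinely consumed.
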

\noindent  Notice that the sufficient condition given by Theorem \ref{th3} is not necessary
\cite{MR3448779, MR4219338}, thus the paper \cite{MR4219338} provided new sufficient conditions for the validity of the real Jacobian conjecture in which the higher homogeneous terms of the polynomials
$ff_x + gg_x$ and $ff_y + gg_y$ have real linear factors in common of multiplicity one.
Other references on this topic include the papers \cite{MR3443401,Jackson,tian2021necessary,MR4213674,vall2021}.

In algebraic geometry,  the \emph{Newton diagram}  is a powerful tool for investigating the behaviour of polynomials over local fields, see Section \ref{se-2} for the definition of the Newton diagram.  By algebraic methods, the scholars established a connection between the Jacobian conjecture and the Newton diagram,  and used this  connection to obtain several  valuable results, as shown in Chapter $10$ of  \cite{van2012polynomial}. So, it is natural to ask what happens when the real Jacobian conjecture encounters the Newton diagram.  %So, it is natural to ask how to apply the Newton diagram to study the real Jacobian conjecture.

In this work, we introduce the Newton diagram to present  a new sufficient condition such that the real Jacobian conjecture holds.  The qualitative theory of dynamical systems plays a crucial role in our proofs.

Let $d=\max\left\{\text{deg}\;\Lambda\left(x,y\right),\text{deg}\;\Omega\left(x,y\right)\right\}$. Denote by $b\left(\mathcal{X}\right)$ the \emph{Bendixson compactification}  of the Hamiltonian vector field $\mathcal{X}$ defined in \eqref{eq-1}.  For more details about Bendixson compactification, we refer to
Chapter $13$ of \cite{MR0350126} and Chapter $5$ of \cite{Dumortier2006qualitative}. The explicit expression for $b\left(\mathcal{X}\right)$ is
\begin{align}\label{eq-5}
	\begin{cases}
		\dot{u}=\left(u^2+v^2\right)^d\left[\left(v^2-u^2\right)\Lambda\left(\dfrac{u}{u^2+v^2},\dfrac{v}{u^2+v^2}\right)-2uv\Omega\left(\dfrac{u}{u^2+v^2},\dfrac{v}{u^2+v^2}\right)\right],\\
		\specialrule{0em}{3pt}{3pt}
		\dot{v}=\left(u^2+v^2\right)^d\left[\left(u^2-v^2\right)\Omega\left(\dfrac{u}{u^2+v^2},\dfrac{v}{u^2+v^2}\right)-2uv\Lambda\left(\dfrac{u}{u^2+v^2},\dfrac{v}{u^2+v^2}\right)\right].
	\end{cases}
\end{align}

Our main result is described as follows.
\begin{theorem}\label{th-11}
 Consider a polynomial map  $F=\left(f\left(x,y\right),g\left(x,y\right)\right):\mathbb{R}^2\rightarrow \mathbb{R}^2$ such that $F\left(0,0\right)=\left(0,0\right)$ and  $\det DF\left(x,y\right)\neq0$ for all  $\left(x,y\right)\in\mathbb{R}^2$. Denote by $b\left(\mathcal{X}\right)$ the Bendixson compactification of the Hamiltonian vector field $\mathcal{X}$ given in \eqref{eq-1}.
	Let $\mathcal{N}\left(b\left(\mathcal{X}\right)\right)$ be the Newton diagram of $b\left(\mathcal{X}\right)$. For each bounded edge of type $\mathbf{t}$ in $\mathcal{N}\left(b\left(\mathcal{X}\right)\right)$, if its associated Hamiltonian does not have any factor of the form $v^{t_1}-\lambda u^{t_2}$ with $\mathbf{t}= \left(t_1,t_2\right)\in\mathbb{N}^2$ and $\lambda\in\mathbb{R}\setminus\{0\}$, then $F$ is injective.
\end{theorem}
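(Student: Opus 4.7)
The plan is to apply Theorem \ref{th-1} and reduce the problem to showing that the origin is a global center for the Hamiltonian vector field $\mathcal{X}$ defined in \eqref{eq-1}. The polynomial $H(x,y) = (f^2+g^2)/2$ is a first integral of $\mathcal{X}$, attaining a strict local minimum at the origin since $F(0,0)=(0,0)$ and $\det DF(0,0)\neq 0$. A direct computation of the Jacobian of $\mathcal{X}$ at the origin yields a matrix with zero trace and positive determinant $\left(\det DF(0,0)\right)^{2}$, so its eigenvalues are purely imaginary and the origin is a non-degenerate local center. Moreover, since $\det DF$ never vanishes, the origin is the unique finite singular point of $\mathcal{X}$, and because $\mathcal{X}$ is Hamiltonian it admits no limit cycles. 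Thus the only possible obstruction to a global center is the escape of orbits to infinity, and ruling this out is exactly what the Bendixson compactification $b(\mathcal{X})$ together with its Newton diagram are designed to handle.

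Next I would analyse the singular point of $b(\mathcal{X})$ at the origin of the $(u,v)$-chart, which encodes the behaviour of $\mathcal{X}$ at infinity. For each bounded edge $\Delta$ of type $\mathbf{t}=(t_1,t_2)$ in $\mathcal{N}\left(b(\mathcal{X})\right)$, the principal $\mathbf{t}$-quasi-homogeneous part of $b(\mathcal{X})$ associated with $\Delta$ is itself Hamiltonian, generated by the associated quasi-homogeneous polynomial $H_\Delta(u,v)$ in the statement. A real characteristic direction at the origin of $b(\mathcal{X})$, that is, an invariant half-ray along which an orbit may enter or leave the singularity, corresponds precisely to a real factor of some $H_\Delta$ of the form $v^{t_1}-\lambda u^{t_2}$ with $\lambda\in\mathbb{R}\setminus\{0\}$. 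The hypothesis excludes such factors on every bounded edge, so the desingularisation of the origin of $b(\mathcal{X})$ carried out by successive quasi-homogeneous (Newton) blow-ups indexed by the edges introduces no real separatrix. The standard characterisation of monodromy via the Newton diagram then forces this singularity to be monodromic.

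To close the argument, I would use the fact that $b(\mathcal{X})$ inherits from $H$ a real-analytic first integral on any $(u,v)$-neighbourhood of the origin, so a monodromic singularity cannot be a focus and must be a center. Consequently, periodic orbits of $\mathcal{X}$ exist at arbitrarily large amplitude surrounding the origin. Combining this with the absence of limit cycles and of further finite singularities, a standard Poincar\'e-Bendixson argument applied to any annulus bounded by a small and a large periodic orbit shows that every orbit of $\mathcal{X}$ is periodic, so the origin is a global center. Invoking Theorem \ref{th-1} then yields the injectivity of $F$. The principal technical difficulty lies in the middle step: turning the algebraic hypothesis on the edge Hamiltonians into the dynamical conclusion of monodromy at the infinity point, which demands a careful iterative quasi-homogeneous blow-up along $\mathcal{N}\left(b(\mathcal{X})\right)$ and a verification at each stage that no real invariant half-line is produced.
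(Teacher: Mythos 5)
Your overall strategy (reduce to a monodromy statement for the singular point at the origin of $b(\mathcal{X})$ and then come back to injectivity) is the same skeleton as the paper, which however passes through Theorem \ref{th-3} (monodromy of the origin of $b(\mathcal{X})$ is \emph{equivalent} to $F$ being a global diffeomorphism) rather than your route via a local center, absence of limit cycles and a Poincar\'e--Bendixson argument; note also that your auxiliary claim that $b(\mathcal{X})$ ``inherits from $H$ a real-analytic first integral on any $(u,v)$-neighbourhood of the origin'' is false, since the pull-back of $H$ under the Bendixson change of variables is not analytic at $(u,v)=(0,0)$, so the center/focus dichotomy is not settled the way you suggest. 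These issues could be repaired, but they are not the main problem.

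The genuine gap is in the middle step, which you explicitly defer: you claim that the absence of factors $v^{t_1}-\lambda u^{t_2}$, $\lambda\neq 0$, in the edge Hamiltonians already ``forces this singularity to be monodromic'' by the standard Newton-diagram characterisation. That characterisation (Theorem \ref{th-12}, due to Algaba, Garc\'{\i}a and Reyes) requires much more than the factor condition: (a) all vertices of the Newton diagram must have even coordinates; (b) there must be two exterior vertices whose vector coefficients $(a,0)$ and $(0,b)$ satisfy $ab<0$; (c) every inner vertex must satisfy $\beta>0$; and (d) every bounded-edge Hamiltonian must be \emph{non-null} in addition to having no such factors. None of (a)--(c), nor the non-nullity in (d), follows from the hypothesis of Theorem \ref{th-11}; indeed characteristic orbits can be produced by a null edge Hamiltonian with nonzero dissipative part $\mu_{r_{\mathbf{t}}}$ (Proposition \ref{pr1}), by inner vertices with $\beta<0$ (Proposition \ref{pr-7}), or along the coordinate axes (factors with $\lambda=0$ are explicitly allowed by the hypothesis and do occur in the paper's examples), so ``no real factor $v^{t_1}-\lambda u^{t_2}$ with $\lambda\neq0$'' does not by itself exclude real characteristic directions. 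Verifying (a)--(c) and the non-nullity for the specific field $b(\mathcal{X})$ is precisely the substance of the paper's proof: it uses Proposition \ref{pr-5} (no parabolic sectors at the origin of $b(\mathcal{X})$, a consequence of $\det DF\neq0$), Proposition \ref{pr-6} together with Lemmas \ref{le-2}--\ref{le-4} (only the ``diagonal'' quasi-homogeneous components $\mathcal{F}_{4d+1-2i}$, whose Hamiltonians are $(u^2+v^2)^{2d+1-2i}(f_i^2+g_i^2)$ up to a constant, contribute vertices, whence all vertices are even and the exterior vector coefficients have opposite signs), all assembled in Theorem \ref{th-9}. Your proposal contains no argument for any of these points, and your assertion that each edge's principal part is itself Hamiltonian is also inaccurate (the quasi-homogeneous components carry a dissipative part $\mu_k\mathbf{D}_0$ in general); so as written the proof does not go through.
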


%\begin{remark}
% \emph{In Section \ref{se-4}, we will give an example to show that Theorem \ref{th-11} improves the results in \cite{MR3448779} and \cite{MR4219338} }.
%\end{remark}

The following theorem reveals the relationship between Theorems \ref{th-11} and \ref{th3}, which shows that the sufficient condition in Theorem \ref{th-11} is weaker than the sufficient condition in Theorem \ref{th3}. Thus, our
result generalizes the main result of \cite{MR3448779}.
\begin{theorem}\label{th2}
 Consider a polynomial map  $F=\left(f\left(x,y\right),g\left(x,y\right)\right):\mathbb{R}^2\rightarrow \mathbb{R}^2$ such that $F\left(0,0\right)=\left(0,0\right)$ and  $\det DF\left(x,y\right)\neq0$ for all  $\left(x,y\right)\in\mathbb{R}^2$.
 Denote by $b\left(\mathcal{X}\right)$ the Bendixson compactification of the Hamiltonian vector field $\mathcal{X}$ given in \eqref{eq-1}, and by $\mathcal{N}\left(b\left(\mathcal{X}\right)\right)$ the Newton diagram of $b\left(\mathcal{X}\right)$. If the higher homogeneous terms of the polynomials $ff_x+gg_x$ and $ff_y+gg_y$ do not have real linear factors in common, then  for each bounded edge of type $\mathbf{t}$ in $\mathcal{N}\left(b\left(\mathcal{X}\right)\right)$, its associated Hamiltonian does not have any factor of the form $v^{t_1}-\lambda u^{t_2}$ with $\mathbf{t}= \left(t_1,t_2\right)\in\mathbb{N}^2$ and $\lambda\in\mathbb{R}\setminus\{0\}$.
\end{theorem}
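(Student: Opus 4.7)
The plan is to reduce the statement to a single clean algebraic fact about the top homogeneous part $H_{d+1}$ of $H=(f^{2}+g^{2})/2$, namely that under the hypothesis, $H_{d+1}$ has no real linear factor at all. I would establish first that $\mathcal{N}(b(\mathcal{X}))$ has a unique bounded edge, of type $\mathbf{t}=(1,1)$, whose associated Hamiltonian is $H_{d+1}(u,v)$, and then exploit the non-negativity of $H$ together with Euler's identity to rule out any factor of the form $v-\lambda u$ with $\lambda\neq 0$.

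First I would expand $b(\mathcal{X})$ into its homogeneous components. Writing $\Lambda=\sum_{m=0}^{d}\Lambda_{m}$ and $\Omega=\sum_{m=0}^{d}\Omega_{m}$ and substituting into \eqref{eq-5}, one obtains a polynomial vector field whose lowest-degree homogeneous part is
\begin{align*}
P_{d+2}(u,v) &= (v^{2}-u^{2})\Lambda_{d}(u,v)-2uv\,\Omega_{d}(u,v),\\
Q_{d+2}(u,v) &= (u^{2}-v^{2})\Omega_{d}(u,v)-2uv\,\Lambda_{d}(u,v),
\end{align*}
and whose higher-degree homogeneous parts range over degrees $d+3,\dots,2d+2$. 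The key elementary computation is that substituting $v=\lambda u$ into $(P_{d+2},Q_{d+2})$ yields a $2\times 2$ linear system in $(\Lambda_{d}(u,\lambda u),\Omega_{d}(u,\lambda u))$ whose determinant is $-(1+\lambda^{2})^{2}\neq 0$ for real $\lambda$; analogous checks handle $u$ and $v$. Hence $(P_{d+2},Q_{d+2})$ has no common real linear factor whenever $(\Lambda_{d},\Omega_{d})$ does not, and in particular the monomials $u^{d+2}$ and $v^{d+2}$ each occur in at least one of $P_{d+2},Q_{d+2}$. After the standard shifts ($P$ by $(-1,0)$ and $Q$ by $(0,-1)$), the support on the line $a+b=d+1$ therefore reaches both extreme corners, while all higher-degree homogeneous parts populate only the strictly higher lines $a+b\geq d+2$. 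I conclude that $\mathcal{N}(b(\mathcal{X}))$ has exactly one bounded edge, lying on $a+b=d+1$ and of type $\mathbf{t}=(1,1)$.

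To identify the associated Hamiltonian of this edge, I use the first integral $\hat{H}(u,v)=H(u/r^{2},v/r^{2})$ of $b(\mathcal{X})$ (which is meromorphic at the origin) and its polynomial companion
\begin{align*}
(u^{2}+v^{2})^{d+1}\hat{H}(u,v)=H_{d+1}(u,v)+r^{2}H_{d}(u,v)+\cdots+r^{2(d+1)}H_{0},
\end{align*}
whose lowest-degree homogeneous part is $H_{d+1}(u,v)$; this is the associated Hamiltonian of the $(1,1)$-edge. To finish, I show $H_{d+1}$ has no factor of the form $v-\lambda u$ with $\lambda\ne 0$. First, $H_{d+1}\geq 0$ on $\mathbb{R}^{2}$: since $H=(f^{2}+g^{2})/2\geq 0$ and $t^{-(d+1)}H(tx,ty)\to H_{d+1}(x,y)$ as $t\to\infty$, the top homogeneous part of a non-negative polynomial is itself non-negative. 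Second, $H_{d+1}$ is square-free with respect to real linear factors: if $(\alpha u+\beta v)^{2}\mid H_{d+1}$, then direct differentiation forces $\alpha u+\beta v$ to divide both $(H_{d+1})_{u}=\Omega_{d}$ and $(H_{d+1})_{v}=-\Lambda_{d}$, contradicting the hypothesis (the converse of this implication uses Euler's identity $u(H_{d+1})_{u}+v(H_{d+1})_{v}=(d+1)H_{d+1}$). A non-negative homogeneous polynomial can vanish on the line $v=\lambda u$ only to even order, so square-freeness forbids any real linear factor; in particular, $H_{d+1}$ has no factor of the form $v-\lambda u$ with $\lambda\neq 0$, as required. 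The main obstacle is the structural step that reduces the Newton diagram to a single edge; once this reduction is secured, the algebraic conclusion is an immediate application of Euler's identity combined with the sign of $H$.
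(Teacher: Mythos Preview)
Your argument tacitly assumes that the degree-$d$ homogeneous parts $\Lambda_d$ and $\Omega_d$ are \emph{both} nonzero and coprime, where $d=\max\{\deg\Lambda,\deg\Omega\}$. But the hypothesis of the theorem concerns the top homogeneous term of $ff_x+gg_x$ and the top homogeneous term of $ff_y+gg_y$ \emph{separately}, and these two polynomials need not have the same degree. When $\deg H_x>\deg H_y$ (the paper's Case~2), the top part $H_{2d}$ of $H$ is a pure power $a_{2d}x^{2d}$; then $\Lambda_d\equiv 0$ and $\Omega_d=2da_{2d}x^{2d-1}$, so
\[
P_{d+2}=-2uv\,\Omega_d=-4da_{2d}u^{2d}v,\qquad Q_{d+2}=(u^2-v^2)\Omega_d=2da_{2d}u^{2d-1}(u^2-v^2),
\]
and neither contains the monomial $v^{d+2}$. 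Consequently the support on the lowest line does \emph{not} reach the vertex on the $v$-axis, and $\mathcal{N}(b(\mathcal{X}))$ acquires a second bounded edge (in the paper, of type $(2d-d_1,d)$) that your proof never examines. The paper handles this case by locating the extra inner vertex $V_{2,2d}=(2d,2)$ and computing the associated Hamiltonian of the second edge to be proportional to $\bigl(a_{2d}u^{2d}+b\,v^{2(2d-d_1)}\bigr)v^{2}$, which has no factor $v^{t_1}-\lambda u^{t_2}$ because $a_{2d},b>0$. Case~3 is symmetric. Thus your reduction to a single $(1,1)$-edge is only valid in Case~1, and you need either to treat Cases~2 and~3 directly or to prove they are vacuous under the standing assumptions (the paper does not claim the latter).

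A minor point: even in Case~1 the associated Hamiltonian of the $(1,1)$-edge, computed from $\frac{1}{k+|\mathbf t|}(uQ_{k+1}-vP_{k+1})$, is $\frac{d+1}{d+3}(u^2+v^2)H_{d+1}(u,v)$, not $H_{d+1}$ itself. Your first-integral heuristic gives the right answer up to the harmless positive factor $(u^2+v^2)$, so the final conclusion about factors $v-\lambda u$ survives, but the identification as stated is not quite correct.
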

As the applications of Theorem \ref{th-11}, two new classes of polynomial maps satisfying the real Jacobian conjecture in $\mathbb{R}^2$ are given, in which cases the higher homogeneous terms of the polynomials $ff_x+gg_x$ and $ff_y+gg_y$ have real linear factors in common of multiplicity more than one, thus the verification of these examples can not be tackled using the existing methods in \cite{MR3448779} and \cite{MR4219338}.
\begin{example}\label{ex1} {\rm Let}
$$F=\left(f,g\right)=\left(\sum_{i=0}^na_ix^{2i+1},y+\sum_{i=1}^mb_ix^{2i}\right),$$
{\rm where $a_i\geq0,\ i=0,1,\ldots,n$, $b_i\geq0,\ i=1,2,\ldots,m$, and $a_0>0, a_{n}>0, b_m>0, n\geq m\geq1$.
The map $F$ is injective.

Clearly, $\det DF\left(x,y\right)=\sum_{i=0}^n(2i+1)a_ix^{2i}>0$.  The Hamiltonian vector
 field \eqref{eq-1} associated to $F$  is
\begin{equation*}\begin{split}
\mathcal{X}&=\left(-ff_y-gg_y,ff_x+gg_x\right)\\
&=\left(-y-\sum_{i=1}^mb_ix^{2i},
y\sum_{i=1}^m2ib_ix^{2i-1}
+\sum_{i=1}^mb_ix^{2i}\sum_{i=1}^{m}2ib_ix^{2i-1}+\sum_{i=0}^na_ix^{2i+1}\sum_{i=0}^n(2i+1)a_ix^{2i}\right).
\end{split}\end{equation*}
The higher
homogeneous terms of $ff_x+gg_x$ and $ff_y+gg_y$  have the linear common factor $x$ with multiplicity $2m\geq2$. Thus, neither Theorem \ref{th3} nor Theorem 2 in \cite{MR4219338} does work for this example.}
\end{example}
\begin{example}\label{ex2}{\rm Let}
$$F=\left(f,g\right)=\left(\sum_{i=0}^{m_1}a_iy^{2i+1}+\sum_{i=0}^{m_2}b_ix^{2i+1},
\sum_{i=0}^{m_3}c_iy^{2i+1}-\sum_{i=0}^{m_2}d_ix^{2i+1}\right),$$
{\rm where $a_i\geq0,\ i=0,1,\ldots,m_1$, $b_i\geq0,\ i=0,1,\ldots,m_2$, $c_i\geq0,\ i=0,1,\ldots,m_3$,
$d_i\geq0,\ i=0,1,\ldots,m_2$, and $b_0c_0+a_0d_0>0, a_{m_1}>0, b_{m_2}>0, d_{m_2}>0, m_1>\max\left\{m_2,m_3\right\}\geq0$.
The map $F$ is injective.

Obviously, $$\det DF\left(x,y\right)=\sum_{i=0}^{m_2}(2i+1)b_ix^{2i}\sum_{i=0}^{m_3}(2i+1)c_iy^{2i}+
\sum_{i=0}^{m_1}(2i+1)a_iy^{2i}\sum_{i=0}^{m_2}(2i+1)d_ix^{2i}\geq b_0c_0+a_0d_0>0.$$  The Hamiltonian vector
 field \eqref{eq-1} associated to $F$  is
{\small{\begin{equation*}\begin{split}
\mathcal{X}=&\left(-ff_y-gg_y,ff_x+gg_x\right)\\
=&\left(-\left(\sum_{i=0}^{m_1}a_iy^{2i+1}+\sum_{i=0}^{m_2}b_ix^{2i+1}\right)\sum_{i=0}^{m_1}(2i+1)a_iy^{2i}
-\left(\sum_{i=0}^{m_3}c_iy^{2i+1}-\sum_{i=0}^{m_2}d_ix^{2i+1}\right)\sum_{i=0}^{m_3}(2i+1)c_iy^{2i},\right.\\
&\left.\left(\sum_{i=0}^{m_1}a_iy^{2i+1}+\sum_{i=0}^{m_2}b_ix^{2i+1}\right)\sum_{i=0}^{m_2}(2i+1)b_ix^{2i}
-\left(\sum_{i=0}^{m_3}c_iy^{2i+1}-\sum_{i=0}^{m_2}d_ix^{2i+1}\right)\sum_{i=0}^{m_2}(2i+1)d_ix^{2i}\right).
\end{split}\end{equation*}
}}It is easy to verify that the higher
homogeneous terms of $ff_x+gg_x$ and $ff_y+gg_y$  have the linear common factor $y$ with multiplicity $2m_1+1>2$. Thus, Theorem \ref{th3} and Theorem 2 in \cite{MR4219338} also can not be applied to this example.}
\end{example}
\begin{remark}
\emph{By Theorem \ref{th-1}, Examples \ref{ex1} and \ref{ex2} present two new classes of polynomial Hamiltonian vector fields with a global center. For the Hamiltonian vector fields, the characterization of the global center is a particularly challenging problem due to the fact that it relates to the  complex global dynamical analysis. Constructing such Hamiltonian vector fields has great significance to find limit cycles of planar polynomial vector fields by bifurcation theory.}
\end{remark}
The paper is organized as follows. Some preliminary definitions and results will be given in Section \ref{se-2}. In Section \ref{se-3}, we prove Theorems \ref{th-11} and \ref{th2}. At last, we solve these two examples to illustrate the practicability of our main result in Section \ref{se-4}.
%The last section will further discuss that map $F$ does not satisfy Theorem \ref{th-11}, that is, the case of existing factor of the form $v^{t_1}-\lambda u^{t_2}$ with $\lambda$ non-zero real.

\section{Preliminary definitions and results}\label{se-2}
In this section, we recall some basic  definitions and known results for the proof of Theorem \ref{th-11}.

\subsection{Newton diagram}\label{sub-1}

We consider the  following  analytic autonomous differential system
\begin{align}\label{eq-3}
	&\dot{x}=P\left(x,y\right),\quad\dot{y}=Q\left(x,y\right).
\end{align}
The vector field $\mathscr{X}$ associated to system \eqref{eq-3} is defined by $\mathscr{X}=\left(P,Q\right)$ or
\begin{align*}
	&\mathscr{X}=P\left(x,y\right)\frac{\partial}{\partial x}+Q\left(x,y\right)\frac{\partial}{\partial y}.
\end{align*}

Let $\text{\bf t}=\left(t_1,t_2\right)\neq\mathbf{0}$ with $t_1$ and $t_2$ coprime non-negative integers. A polynomial $R\left(x,y\right)$ is \emph{quasi-homogeneous} of type $\mathbf{t}$ and degree $k$ if $R\left(\lambda^{t_1}x,\lambda^{t_2}y\right)=\lambda^kR\left(x,y\right)$ for all $\lambda\in\mathbb{R}^+$. We denote by $\text{deg}_xR$ and $\text{deg}_yR$ the degree of the polynomial $R\left(x,y\right)$ with respect to $x$ and $y$, respectively.  The vector
space of quasi-homogeneous polynomials of type $\mathbf{t}$ and degree $k$ is denoted by $\mathcal{P}_{k}^{\mathbf{t}}$. The vector field $\mathscr{X}_k=\left(P_{k+t_1},Q_{k+t_2}\right)$ is \emph{quasi-homogeneous} of type $\mathbf{t}$ and degree $k$ if $P_{k+t_1}\in \mathcal{P}_{k+t_1}^{\mathbf{t}}$ and $Q_{k+t_2}\in \mathcal{P}_{k+t_2}^{\mathbf{t}}$. We denote the vector space of the quasi-homogeneous polynomial vector fields of type $\mathbf{t}$ and degree $k$ by $\mathcal{Q}_k^{\mathbf{t}}$.

Any analytic vector field $ \mathscr{X}$ can be written as  the sum of its  quasi-homogeneous components, that is,
\begin{align*}
&\mathscr{X}=\mathscr{X}_r+\mathscr{X}_{r+1}+\cdots=\sum_{k=r}^\infty\mathscr{X}_k,
\end{align*}
where $r\in \mathbb{N}$, $\mathscr{X}_r \not\equiv0$ and $\mathscr{X}_k=\left(P_{k+t_1},Q_{k+t_2}\right)\in \mathcal{Q}_k^{\mathbf{t}}$. For every quasi-homogeneous vector field $\mathscr{X}_k\in \mathcal{Q}_k^{\mathbf{t}}$, it can be expressed as
\begin{align}\label{eq-16}
&\mathscr{X}_k=\mathbf{X}_{h_{k+|\text{\bf t}|}}+\mu_k \mathbf{D}_0,
\end{align}
where
\begin{align*}
	&|\text{\bf t}|=t_1+t_2,\;\mathbf{D}_0:=\left(t_1x,t_2y\right),\;\mu_k:=\frac{1}{k+|\text{\bf t}|}\text{div}\left(\mathscr{X}_k\right)\in \mathcal{P}_{k}^{\mathbf{t}},\\
	&h_{k+|\text{\bf t}|}:=\frac{1}{k+|\text{\bf t}|}\left(\mathbf{D}_0\wedge\mathscr{X}_k\right)=\frac{1}{k+|\text{\bf t}|}\left(t_1xQ_{k+t_2}-t_2yP_{k+t_1}\right)\in \mathcal{P}_{k+|\text{\bf t}|}^{\mathbf{t}},\\
	&\mathbf{X}_{h_{k+|\text{\bf t}|}}:=\left(-\frac{\partial h_{k+|\text{\bf t}|}}{\partial y},\frac{\partial h_{k+|\text{\bf t}|}}{\partial x}\right),
\end{align*}
see for instance \cite{algaba2009integrability}.  Equation \eqref{eq-16} is  the classical \emph{conservative-dissipative splitting of a quasi-homogeneous vector field}. The quasi-homogeneous polynomial $h_{k+|\text{\bf t}|}$ is called the \emph{Hamiltonian  associated to $\mathscr{X}_k$}.

We briefly introduce  the Newton diagram as follows, see \cite{MR2819283,berezovskaya1993complicated,brunella1990topological,medvedeva2006analytic} for more details. Let $yP\left(x,y\right)=\sum a_{ij}x^iy^j$, $xQ\left(x,y\right)=\sum b_{ij}x^iy^j$ and $R\left(x,y\right)=\sum c_{ij}x^iy^j$. The \emph{support} of $\mathscr{X}$ is defined to be
$$\text{supp}\left(\mathscr{X}\right)=\left\{\left(i,j\right)\;|\;\left(a_{ij},b_{ij}\right)\neq\mathbf{0}\right\}\subset\mathbb{R}^2.$$
The vector $\left(a_{ij},b_{ij}\right)$  is called the \emph{vector coefficient} of $\left(i,j\right)$ in the support. The set
$$\text{supp}\left(R\right)=\left\{\left(i,j\right)\;|\;c_{ij}\neq0\right\}\subset\mathbb{R}^2$$
 is also said to be the \emph{support} of the polynomial $R\left(x,y\right)$.

Consider the convex hull $\Gamma$ of the set
\begin{align}\label{eq-14}
&\bigcup\limits_{\left(i,j\right)\in\text{supp}\left(\mathscr{X}\right)}\left(\left(i,j\right)+\mathbb{R}_+^2\right),
\end{align}
where $\mathbb{R}_+^2$ is the positive quadrant. The boundary of the convex hull $\Gamma$ consists of two open rays and a polygonal line that may be just a single point. This polygonal line is called the \emph{Newton diagram} of the vector field $\mathscr{X}$. Analogously to the above definitions, we can define the \emph{Newton diagram} of the  polynomial $R\left(x,y\right)$.

We say that the line segments of the polygonal line are \emph{edges} of the Newton diagram and their end points are \emph{vertices} of the Newton diagram. If a vertex of the Newton diagram lies on a coordinates axis, then it is said to be an \emph{exterior vertex}; otherwise, it is an \emph{inner vertex}. If the boundary of the convex hull $\Gamma$ of set \eqref{eq-14} contains a ray that does not lie on the coordinate axis, we endow this ray with an \emph{unbounded edge} of the Newton diagram.

Let $\left(a_{ij},b_{ij}\right)$ be the vector coefficient associated with the vertex $\left(i,j\right)$. The \emph{exponent of a vertex} $\left(i,j\right)$ is defined as the number
 \begin{equation*}\label{eq-13}
  \alpha:=
  \begin{cases}
  \frac{b_{ij}}{a_{ij}},\quad\text{if}\quad a_{ij}\neq0,\\
\infty,\quad\text{if}\quad a_{ij}=0. \\
   \end{cases}
\end{equation*}
The \emph{exponent of a bounded edge} $\ell$ of the Newton diagram is a positive rational number $t_2/t_1$  that is equal to the modulus of the tangent of the angle between the edge and the ordinate axis.  The exponent of the edge $\ell$ is denoted by $\alpha_\ell$, and the pair $\mathbf{t}=\left(t_1,t_2\right)$ is called \emph{type} of the edge $\ell$. If the Newton diagram has an unbounded horizontal edge, then we define its exponent as $\infty$ and its type as $\left(0,1\right)$, and if there is an unbounded vertical edge, then we define its exponent as $0$ and its type as $\left(1,0\right)$.

For each inner vertex $V$ of the Newton diagram of the vector field $\mathscr{X}$, there exist $\mathbf{t}=\left(t_1,t_2\right)$ type edge $\ell$ and $\mathbf{s}=\left(s_1,s_2\right)$ type edge $\tilde{\ell}$ such that $\ell$ and $\tilde{\ell}$ are its upper and lower adjacent edges, respectively, i.e., $\alpha_{\ell}=t_2/t_1<s_2/s_1=\alpha_{\tilde{\ell}}$, with $h_{r_{\mathbf{t}}+|\mathbf{t}|}h_{r_{\mathbf{s}}+|\mathbf{s}|}\not\equiv0$. To each inner vertex $V$, we introduce a constant
\begin{equation*}\label{eq-17}
\beta=c_{i_0}\tilde{c}_{j_0},
\end{equation*}
where $i_0=\min\left\{i\geq0\;|\;c_i\neq0\right\}$, $j_0=\min\left\{j\geq0\;|\;\tilde{c}_j\neq0\right\}$, and $c_i$ and $\tilde{c}_j$ are the coefficients of the polynomials $h_{r_{\mathbf{t}}+|\mathbf{t}|}$ and $h_{r_{\mathbf{s}}+|\mathbf{s}|}$, ordered from the highest to the lowest exponent in $x$ and $y$, respectively.

\subsection{Criterion of  monodromic singular point}\label{sub-2}
Let the origin be a singular point of the vector field $\mathscr{X}$.  The origin is \emph{monodromic} if there exists a neighborhood of the origin such that the orbits of $\mathscr{X}$ turn around the origin either $t\rightarrow+\infty$ or $t\rightarrow-\infty$. In  the neighborhood of the origin, we define the following sets
$$W_{\mathbf{t},\mathbf{s}}^{\left(\sigma_1,\sigma_2\right)}=\left\{(x,y)\in \mathbb{R}^2\;|\;\epsilon x^{s_2/s_1}\leq y\leq \frac{1}{\epsilon}x^{t_2/t_1},\;(-1)^{\sigma_1}x\geq0,\;(-1)^{\sigma_2}\epsilon>0\right\},$$
with $\sigma_1,\sigma_2\in\left\{0,1\right\}$.

The following results related to the sufficient conditions of the non-monodromic or monodromic singular point were proved in \cite{MR2819283}.
\begin{proposition}[see \cite{MR2819283}]\label{pr1}
If the Newton diagram of the vector field $\mathscr{X}$ exists an edge with  type $\mathbf{t}$ such that $h_{r_{\mathbf{t}}+|\mathbf{t}|}\equiv0$ and $\mu_{r_{\mathbf{t}}}\not\equiv0$, then the origin is a node (i.e, the origin has  parabolic sectors).
\end{proposition}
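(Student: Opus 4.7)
The plan is to combine the conservative--dissipative splitting \eqref{eq-16} with a quasi-homogeneous blow-up directed by the edge of type $\mathbf{t}$. The hypothesis $h_{r_{\mathbf{t}}+|\mathbf{t}|}\equiv 0$ immediately collapses the lowest quasi-homogeneous component attached to this edge to a purely dissipative radial field,
\[
\mathscr{X}_{r_{\mathbf{t}}}=\mu_{r_{\mathbf{t}}}(x,y)\,\mathbf{D}_0=\mu_{r_{\mathbf{t}}}(x,y)\bigl(t_1 x,\,t_2 y\bigr),
\]
whose integral curves are exactly the quasi-homogeneous curves $|y|^{t_1}=c\,|x|^{t_2}$, traversed in the direction fixed by the sign of $\mu_{r_{\mathbf{t}}}$. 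Thus, even before any blow-up, the principal part already forces every orbit to be tangent at the origin to a definite quasi-homogeneous direction; the task is to show that this tangency persists under the higher-order perturbations encoded by the remainder of the Newton diagram.

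First I would introduce the directional quasi-homogeneous blow-up $(u,v)\mapsto(u^{t_1},u^{t_2}v)$, together with its $y$-axis counterpart so as to cover the entire sector attached to the edge. Substituting into $\mathscr{X}=\mathscr{X}_{r_{\mathbf{t}}}+\mathscr{X}_{r_{\mathbf{t}}+1}+\cdots$, factoring out the common $u^{r_{\mathbf{t}}}$ and rescaling time, a short computation turns the blown-up field into
\begin{align*}
\frac{du}{d\tau}=u\,\mu_{r_{\mathbf{t}}}(1,v)+O(u^{2}),\qquad \frac{dv}{d\tau}=O(u).
\end{align*}
Hence the exceptional divisor $\{u=0\}$ is a line of equilibria whose normal eigenvalue at $(0,v_0)$ equals $\mu_{r_{\mathbf{t}}}(1,v_0)$, which is nonzero outside the finite zero set of the polynomial $v\mapsto\mu_{r_{\mathbf{t}}}(1,v)$ (not identically zero because $\mu_{r_{\mathbf{t}}}\not\equiv 0$).

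Next I would apply the semi-hyperbolic invariant-manifold theorem at each such point: for every $v_0$ with $\mu_{r_{\mathbf{t}}}(1,v_0)\neq 0$ there is a unique strong (un)stable curve transverse to $\{u=0\}$, on which orbits tend to $(0,v_0)$ in forward or backward time according to the sign of $\mu_{r_{\mathbf{t}}}(1,v_0)$. Blowing back down, this curve becomes an orbit of $\mathscr{X}$ that reaches the origin tangent to $y^{t_1}=v_0\,x^{t_2}$. As $v_0$ varies over a connected component of $\{\mu_{r_{\mathbf{t}}}(1,\cdot)\neq 0\}$, I obtain a one-parameter family of such characteristic orbits filling an open sub-sector, all traversed in the same time-direction. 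This is by definition a parabolic sector, so the origin has parabolic sectors and must be a node.

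The main obstacle I anticipate is the behaviour at the isolated zeros of $\mu_{r_{\mathbf{t}}}(1,v)$ and at the two endpoints of the edge, where the dominant quasi-homogeneous balance changes and the blown-up equilibria cease to be semi-hyperbolic. At those exceptional values a second (possibly nested) quasi-homogeneous blow-up is needed, and one has to verify that no elliptic, hyperbolic or focal sector can appear at an intermediate stage. Since the troublesome values form a finite set and every generic fibre nearby already carries the parabolic foliation just described, a standard inductive desingularisation argument on the degree of $\mu_{r_{\mathbf{t}}}$ should close these gaps and confirm that a full neighbourhood of the origin is filled with characteristic orbits, establishing the nodal structure claimed by the proposition.
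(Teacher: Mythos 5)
The paper never proves Proposition \ref{pr1} --- it is quoted verbatim from \cite{MR2819283} --- so there is no internal proof to compare with; judging your argument on its own merits, it is essentially the standard blow-up proof behind that reference and it is sound. Indeed, $h_{r_{\mathbf{t}}+|\mathbf{t}|}\equiv0$ forces $\mathscr{X}_{r_{\mathbf{t}}}=\mu_{r_{\mathbf{t}}}\mathbf{D}_0$ by \eqref{eq-16}; in the $\mathbf{t}$-directional chart $(x,y)=(u^{t_1},u^{t_2}v)$ the tangential component of the lowest-order term on the divisor is proportional to $h_{r_{\mathbf{t}}+|\mathbf{t}|}(1,v)$ and hence vanishes, which is exactly why your normal form $du/d\tau=u\,\mu_{r_{\mathbf{t}}}(1,v)+O(u^2)$, $dv/d\tau=O(u)$ is correct after dividing by $u^{r_{\mathbf{t}}}$; and $\mu_{r_{\mathbf{t}}}(1,\cdot)\not\equiv0$ because distinct monomials of a quasi-homogeneous polynomial of type $\mathbf{t}$ with $t_1\geq1$ have distinct $y$-exponents, so $\mu_{r_{\mathbf{t}}}\not\equiv0$ transfers to the restricted polynomial. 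Over a compact subinterval on which $\mu_{r_{\mathbf{t}}}(1,\cdot)$ has constant sign the divisor is a normally hyperbolic curve of equilibria, the strong leaves fill a one-sided neighbourhood, and blowing down gives an open sector of characteristic orbits tending to the origin in one time direction, i.e.\ a parabolic sector. Two remarks. First, your closing paragraph is unnecessary: in the sense in which the paper uses the proposition (see the parenthetical, and the proof of Theorem \ref{th-9}(a), where only the existence of parabolic sectors is played against Proposition \ref{pr-5}), the conclusion is reached as soon as one such sub-sector is produced, so no further desingularisation at the zeros of $\mu_{r_{\mathbf{t}}}(1,\cdot)$ or at the endpoints of the edge is required; conversely, the literal claim that the origin ``must be a node'' does not follow from your construction (other edges can still generate hyperbolic sectors), so you should drop that phrase rather than try to repair it by nested blow-ups. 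Second, minor points: the tangency class should read $y=v_0x^{t_2/t_1}$ (equivalently $y^{t_1}=v_0^{t_1}x^{t_2}$), orientation is preserved because the rescaling factor $u^{r_{\mathbf{t}}}$ is positive in the chart $u>0$ (the opposite chart and the $y$-directional charts, which you mention, handle the remaining directions and the degenerate types $(1,0)$ and $(0,1)$).
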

\begin{proposition}[see \cite{MR2819283}]\label{pr-7}
Assume that $V$ is an inner vertex of the Newton diagram of the vector field $\mathscr{X}$ with $h_{r_{\mathbf{t}}+|\mathbf{t}|}h_{r_{\mathbf{s}}+|\mathbf{s}|}\not\equiv0$. The region $W_{\mathbf{t},\mathbf{s}}^{\left(0,0\right)}$ is a parabolic (resp. hyperbolic) sector of the origin if and only if $\beta<0$ (resp. $\beta>0$).
\end{proposition}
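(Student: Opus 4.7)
The natural strategy is to perform two quasi-homogeneous blow-ups of the origin---one adapted to the upper edge $\ell$ (type $\mathbf{t}$) and one to the lower edge $\tilde{\ell}$ (type $\mathbf{s}$)---and classify the sector $W_{\mathbf{t},\mathbf{s}}^{(0,0)}$ by checking the direction of the flow across its two boundary curves $y=(1/\epsilon)x^{t_2/t_1}$ and $y=\epsilon x^{s_2/s_1}$. First I would apply the directional blow-up $(x,u)\mapsto(x, u x^{t_2/t_1})$ and, after rescaling time by a suitable power of $x$, restrict the pulled-back vector field to the exceptional divisor $\{x=0\}$. By the conservative--dissipative splitting \eqref{eq-16}, the reduced dynamics there is encoded by $h_{r_\mathbf{t}+|\mathbf{t}|}(1,u)$ and $\mu_{r_\mathbf{t}}(1,u)$. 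Choosing $\epsilon$ small enough that $1/\epsilon$ exceeds the largest positive root of $h_{r_\mathbf{t}+|\mathbf{t}|}(1,\cdot)$, the upper boundary $u=1/\epsilon$ is transverse to the flow with a definite sign. The symmetric blow-up $(v,y)\mapsto(vy^{s_1/s_2},y)$ handles the lower boundary analogously, with $h_{r_\mathbf{t}+|\mathbf{t}|}(1,u)$ replaced by $h_{r_\mathbf{s}+|\mathbf{s}|}(v,1)$.

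Next I would isolate the leading coefficients controlling the two transversality signs. As $u\to 0^+$ in the first chart, $h_{r_\mathbf{t}+|\mathbf{t}|}(1,u)$ has dominant term $c_{i_0}u^{j_V}$, where $V=(i_V,j_V)$ is the vertex; symmetrically, as $v\to 0^+$ in the second chart, $h_{r_\mathbf{s}+|\mathbf{s}|}(v,1)$ has dominant term $\tilde c_{j_0}v^{i_V}$. By quasi-homogeneity, after propagating these leading terms back to the original coordinates, the transversal flow components on the upper and lower boundaries reduce, up to a common positive factor, to $\mathrm{sgn}(c_{i_0})$ and $\mathrm{sgn}(\tilde c_{j_0})$ respectively. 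Applying the standard sector classification for analytic planar vector fields (see e.g.\ \cite{MR2819283, Dumortier2006qualitative}), the wedge is a hyperbolic sector exactly when the flow enters through one boundary and exits through the other, and a parabolic sector when it enters or exits through both. Accounting for the opposite outward-normal orientations of the two boundary curves, this dichotomy becomes $\beta=c_{i_0}\tilde c_{j_0}>0$ hyperbolic versus $\beta<0$ parabolic, as claimed.

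The main obstacle I anticipate is controlling the intermediate regime $t_2/t_1<\alpha<s_2/s_1$ in the interior of $W$, where neither principal part $\mathscr{X}_{r_\mathbf{t}}$ nor $\mathscr{X}_{r_\mathbf{s}}$ dominates and, a priori, additional characteristic directions, closed orbits, or elliptic subsectors could in principle appear. The key point is a Newton--Puiseux bookkeeping: the geometry of the convex hull $\Gamma$ forces the unique weighted-degree-minimizing support point for every weight $\alpha\in(t_2/t_1,s_2/s_1)$ to be the vertex $V$ itself, so that the leading-order dynamics in the wedge interior is governed by the single-monomial field associated to $V$, whose flow interpolates smoothly between the two blow-up analyses and preserves the sign dichotomy determined by $\beta$. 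Once this uniform domination is established, Proposition \ref{pr-7} follows by gluing the two boundary analyses through this intermediate region.
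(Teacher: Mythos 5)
First, a point of comparison: the paper does not prove this proposition at all. It is imported verbatim from Algaba--Garc\'{\i}a--Reyes \cite{MR2819283} (``The following results \dots\ were proved in \cite{MR2819283}''), so there is no internal proof to measure your argument against; what can be assessed is whether your sketch would stand on its own. Its overall shape --- quasi-homogeneous blow-ups adapted to the two adjacent edges, transversality of the flow on the two boundary curves of $W_{\mathbf{t},\mathbf{s}}^{(0,0)}$ governed by the edge Hamiltonians, and domination by the vertex monomial in the intermediate cone of exponents --- is indeed the kind of analysis carried out in the cited reference, so the plan is pointed in the right direction.

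As a proof, however, it has genuine gaps. (i) The key reduction ``the transversal components on the two boundaries reduce, up to a positive factor, to $\mathrm{sgn}(c_{i_0})$ and $\mathrm{sgn}(\tilde c_{j_0})$'' is asserted, not derived; it is precisely here that the splitting \eqref{eq-16} must be used quantitatively, showing that on $u=1/\epsilon$ the conservative part $h_{r_{\mathbf{t}}+|\mathbf{t}|}$ dominates the dissipative part $\mu_{r_{\mathbf{t}}}$ \emph{and} the higher-order quasi-homogeneous terms, and that the surviving sign is the coefficient of $h_{r_{\mathbf{t}}+|\mathbf{t}|}$ at the common vertex (this also needs the hypothesis $h_{r_{\mathbf{t}}+|\mathbf{t}|}h_{r_{\mathbf{s}}+|\mathbf{s}|}\not\equiv0$ and a justified choice of $\epsilon$, including absence of other singular points on the boundary curves). (ii) The step you yourself flag as the obstacle is the crux and remains a gesture: knowing that the vertex monomial minimizes the weighted degree for every intermediate weight does not by itself give that every orbit in the wedge either crosses it or tends monotonically to the origin; one must rule out characteristic directions, singular points of the truncated field, and elliptic subsectors \emph{inside} the wedge, typically via a quasi-homogeneous polar/Lyapunov estimate showing a definite sign of the ``radial'' component there. (iii) Relatedly, ``flow exits (or enters) through both boundaries $\Rightarrow$ parabolic sector'' is not a standard classification fact for a wedge; it needs the interior estimate of (ii) to conclude that trapped orbits actually converge to the origin rather than accumulate elsewhere in the wedge. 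Until (i)--(iii) are carried out, the argument is a plausible plan consistent with \cite{MR2819283}, but not yet a proof of the equivalence.
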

\begin{theorem}[see \cite{MR2819283}]\label{th-12}
Assume that the Newton diagram of the vector field $\mathscr{X}$ satisfies
the following assumptions:

 \begin{itemize}
   \item  [\emph{(a)}] All its vertices have even coordinates.
   \item[\emph{(b)}]  It has two exterior vertices. Moreover, if $(a,0)$ and $(0,b)$ are the vector coefficients of the exterior vertices, then $ab<0$.
   \item [\emph{(c)}] All its inner vertices $V$ verify $\beta>0$.
   \item [\emph{(d)}]  For each bounded edge, its associated Hamiltonian is non-null and does not have any factor of the form $y^{t_1}-\tilde{a}x^{t_2}$ with $\tilde{a}\in\mathbb{R}\setminus\left\{0\right\}$.
 \end{itemize}
Then the origin of the vector field $\mathscr{X}$ is a monodromic singular point.
\end{theorem}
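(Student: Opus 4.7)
The plan is to prove monodromy by exhibiting a finite decomposition of a punctured neighborhood $U\setminus\{0\}$ of the origin into sectorial regions such that on each region the flow is a transit flow, and then verifying that these transits chain together into a single consistent rotational circulation around the origin. The regions are dictated by the Newton diagram: for every bounded edge $\ell$ of type $\mathbf{t}$ there is an edge-region concentrated around the characteristic curve $y^{t_1}=Cx^{t_2}$, and for every inner vertex $V$ with upper and lower adjacent edges of types $\mathbf{t}$ and $\mathbf{s}$ there are the four vertex-wedges $W_{\mathbf{t},\mathbf{s}}^{(\sigma_1,\sigma_2)}$ defined in Subsection~\ref{sub-2}; together with two axial-end regions these tile $U\setminus\{0\}$.

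For each bounded edge $\ell$ of type $\mathbf{t}$, I would apply the quasi-homogeneous directional blow-up $x=r^{t_1}\bar u,\, y=r^{t_2}\bar v$ (or the analogous quasi-homogeneous polar chart), rescale the time by a suitable power of $r$, and reduce $\mathscr{X}$ at $r=0$ to the angular equation driven by the conservative part of the leading quasi-homogeneous component $\mathscr{X}_{r_{\mathbf{t}}}=\mathbf{X}_{h_{r_{\mathbf{t}}+|\mathbf{t}|}}+\mu_{r_{\mathbf{t}}}\mathbf{D}_0$ given in~\eqref{eq-16}. The angular zeros of this reduced system correspond exactly to real factors of $h_{r_{\mathbf{t}}+|\mathbf{t}|}$ of the form $y^{t_1}-\tilde a x^{t_2}$; condition~(d) rules these out and also rules out $h_{r_{\mathbf{t}}+|\mathbf{t}|}\equiv 0$, which by Proposition~\ref{pr1} would otherwise create a parabolic node. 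Consequently every orbit entering the edge-region crosses it transversally with a definite rotational sign determined by the sign of $h_{r_{\mathbf{t}}+|\mathbf{t}|}$ on the corresponding angular interval.

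For each inner vertex~$V$, assumption~(c) and Proposition~\ref{pr-7} say that $W_{\mathbf{t},\mathbf{s}}^{(0,0)}$ is hyperbolic, i.e.\ a transit sector whose orbits enter through one boundary curve and leave through the other. Assumption~(a) makes the leading forms invariant under $(x,y)\mapsto(\pm x,\pm y)$, so the same transit behaviour propagates to the remaining three wedges $W^{(\sigma_1,\sigma_2)}$ at $V$. For the two axial-end regions I would use assumption~(b): the even-coordinate support means the exterior vertices $(a,0)$ and $(0,b)$ carry the monomials whose vector coefficients dominate along the coordinate axes, and the sign condition $ab<0$ fixes that $\dot y$ on the $x$-axis and $\dot x$ on the $y$-axis have opposite overall signs, forcing orbits to cross each axis half-line in a single consistent rotational direction. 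Matching the rotational signs across neighbouring edge-regions and vertex-wedges by continuity of the angular equation, one obtains a nonvanishing cyclic rotation number, which is precisely monodromy.

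The main obstacle will be step two: the quasi-homogeneous angular reduction is standard, but checking that condition~(d) literally forbids \emph{all} angular equilibria (not merely those on the half-line in the open first quadrant) requires tracking the real zeros of $h_{r_{\mathbf{t}}+|\mathbf{t}|}$ along each of the four angular arcs produced by the blow-up of the type-$\mathbf{t}$ edge, which is where the "even coordinates" hypothesis~(a) does real work. A secondary technical difficulty is the gluing at inner vertices: one must show that the transit direction read off the upper edge of $V$ (via its angular equation) agrees with the one read off the hyperbolic-sector classification of $W^{(0,0)}$ from Proposition~\ref{pr-7}, and that in turn agrees with the transit direction on the lower edge of $V$; the sign invariant $\beta=c_{i_0}\tilde c_{j_0}$ is precisely what encodes this compatibility, and its positivity under~(c) is what ultimately closes the circulation.
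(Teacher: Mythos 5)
The paper does not prove this theorem: it is quoted from the reference \cite{MR2819283} (Algaba, Garc\'{\i}a and Reyes) and used as a black box, so there is no in-paper argument to compare yours against. Judged on its own, your sketch reconstructs the strategy of that reference correctly in outline: tile a punctured neighbourhood by edge-regions and vertex-wedges read off the Newton diagram, show each is a transit region free of characteristic orbits, and chain the transits into a rotation. Condition (d) killing the angular equilibria of the quasi-homogeneous angular equation on each bounded edge (and, via Proposition \ref{pr1}, excluding a null Hamiltonian), condition (c) together with Proposition \ref{pr-7} making the vertex-wedges transit sectors, and condition (b) controlling the crossings of the coordinate half-axes are all placed where they belong.

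One step as written is wrong and needs repair: assumption (a) does \emph{not} make the leading forms invariant under $(x,y)\mapsto(\pm x,\pm y)$. Only the \emph{vertices} are required to have even coordinates; interior lattice points of an edge, hence monomials of the associated Hamiltonian $h_{r_{\mathbf{t}}+|\mathbf{t}|}$, may carry odd exponents. The paper's own Example \ref{ex2} exhibits exactly this: the support point $\left(2\left(2m_1-m_2\right)+3,\,2m_1+1\right)$ with both coordinates odd lies on a bounded edge, and the corresponding Hamiltonian contains the cross term in $\left(a_{m_1}v^{2m_1+1}+b_{m_2}u^{2\left(2m_1-m_2\right)+1}\right)^2$, so it is not even in either variable. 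The correct mechanism for passing from the first quadrant to the other three is that the \emph{hypotheses} (a)--(d), not the leading forms, are invariant under the reflections $x\mapsto-x$ and $y\mapsto-y$: the support and the vertices are unchanged, the evenness of the vertex coordinates guarantees that the vector coefficients of the vertices --- hence the signs entering (b) and the constants $\beta$ of (c), which are built from the extreme coefficients $c_{i_0}$, $\tilde{c}_{j_0}$ sitting at the vertices --- transform consistently, and a prohibited factor $y^{t_1}-\tilde{a}x^{t_2}$ is carried to another factor of the same prohibited form. One then reapplies the first-quadrant analysis (the angular equation and Proposition \ref{pr-7}) to each reflected vector field. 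Without this correction, your propagation of the transit behaviour to the wedges $W^{\left(\sigma_1,\sigma_2\right)}$ with $\left(\sigma_1,\sigma_2\right)\neq\left(0,0\right)$, and hence the closing of the circulation, is unjustified.
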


\section{Proof of Theorems \ref{th-11} and \ref{th2}}\label{se-3}
Our goal of this section is to prove Theorems \ref{th-11} and \ref{th2}.  Firstly, we will give several properties of the Newton diagram for the vector field $b\left(\mathcal{X}\right)$.

\begin{lemma}\label{le-3}
Let $\mathscr{X}$ be a polynomial vector field  and the points $p_i\in \emph{supp}\left(\mathscr{X}\right)$ with $i=1,\ldots,m+1$. The following statements hold.
\begin{itemize}
\item  [\emph{(a)}]  If the points $p_1$, $p_2$ and $p_3$ are collinear  (i.e., $p_1$, $p_2$ and $p_3$ lie on the same line) and $p_2$ is located between $p_1$ and $p_3$, then $p_2$ is not a vertex of the Newton diagram of the vector field $\mathscr{X}$.
\item  [\emph{(b)}] If $p_1,\ldots,p_m$ are the vertices of the polygon in Figure \ref{Fig-4} and $p_{m+1}$  is located in the interior of the polygon, then $p_{m+1}$ is not a vertex of the Newton diagram of the vector field $\mathscr{X}$.
\end{itemize}
\begin{figure}[H]
\centering
\begin{minipage}{0.25\linewidth}
\centering
\psfrag{1}{$p_1$}
\psfrag{2}{$p_2$}
\psfrag{3}{$p_3$}
\psfrag{4}{$p_4$}
\psfrag{5}{$p_m$}
\psfrag{6}{$p_{m+1}$}
\psfrag{7}{$l_m$}
\psfrag{8}{$l_1$}
\psfrag{9}{$l_2$}
\psfrag{10}{$l_3$}
\centerline{\includegraphics[width=1\textwidth]{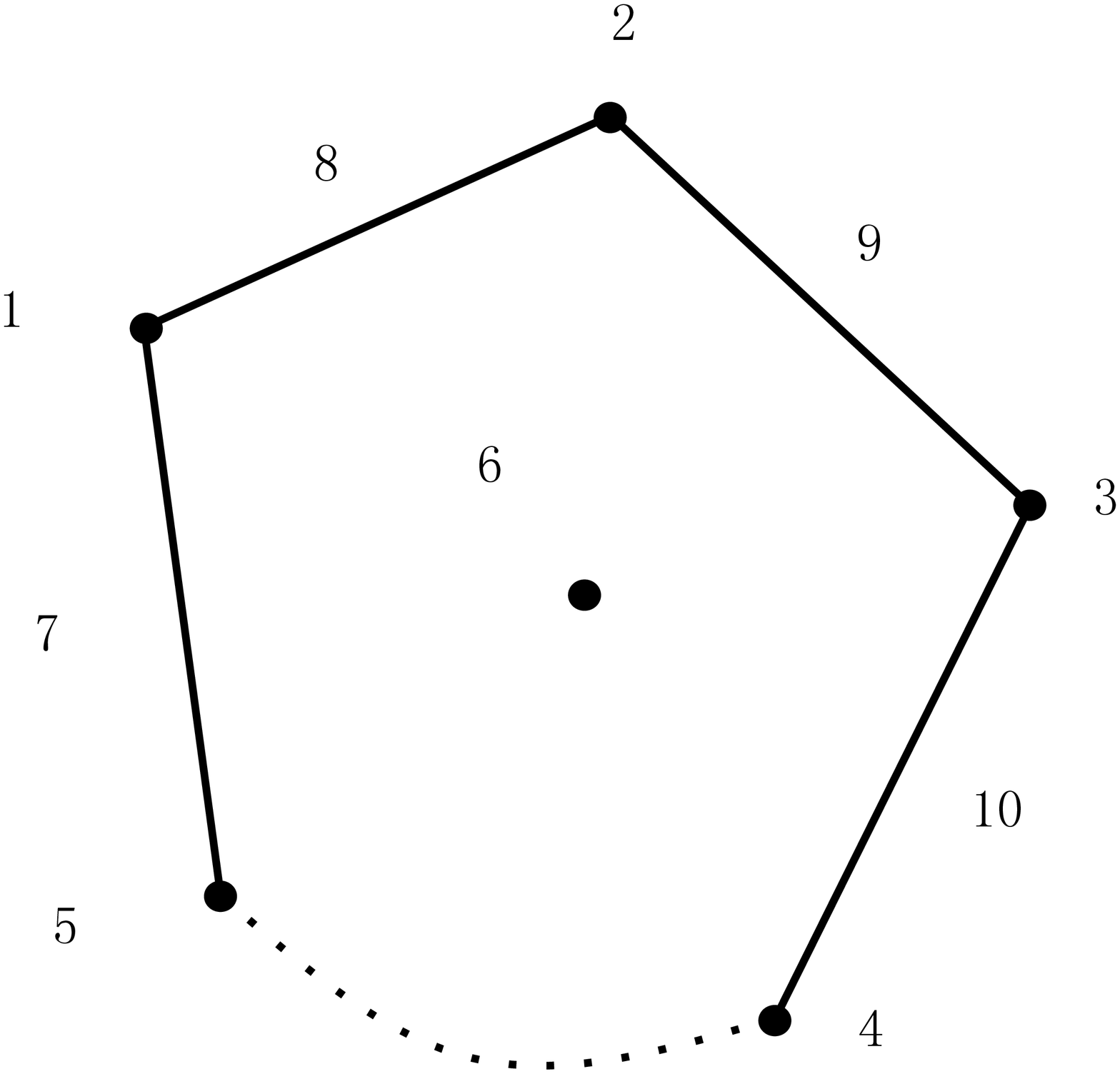}}
\end{minipage}
\caption{Polygon.}\label{Fig-4}
\end{figure}
\end{lemma}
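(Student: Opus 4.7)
The plan is to derive both statements from a single elementary convexity principle: a support point that admits a nontrivial convex representation by other elements of $\Gamma$ cannot be an extreme point of $\Gamma$, and therefore cannot be a vertex of the Newton diagram of $\mathscr{X}$. Indeed, by \eqref{eq-14} the Newton diagram is the polygonal part of the lower-left boundary of the convex hull $\Gamma$, so its vertices are precisely the extreme points of $\Gamma$ lying on this polygonal line. Since every support point is already contained in $\Gamma$, it is enough in each part to exhibit such a nontrivial convex representation.

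For part (a), the collinearity of $p_1, p_2, p_3$ together with the assumption that $p_2$ lies strictly between $p_1$ and $p_3$ yields a unique $\lambda\in(0,1)$ with $p_2=\lambda p_1+(1-\lambda)p_3$. Since $p_1, p_3\in\mathrm{supp}(\mathscr{X})\subset\Gamma$ and both differ from $p_2$, this exhibits $p_2$ as a nontrivial convex combination of two points of $\Gamma$ distinct from itself, so $p_2$ is not extreme in $\Gamma$ and hence not a vertex of the Newton diagram.

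For part (b), the same mechanism applies: if $p_{m+1}$ lies in the topological interior of the polygon with vertices $p_1,\ldots,p_m$, then by the standard barycentric description of a convex polygon there exist strictly positive weights $\lambda_1,\ldots,\lambda_m$ with $\sum_i\lambda_i=1$ and $p_{m+1}=\sum_{i=1}^m\lambda_i p_i$. Every $p_i$ belongs to $\Gamma$ and differs from $p_{m+1}$, so once more $p_{m+1}$ is not extreme in $\Gamma$ and therefore is not a Newton-diagram vertex.

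The one point that needs care is that extremality is taken with respect to $\Gamma$ rather than $\mathrm{conv}(\mathrm{supp}(\mathscr{X}))$; but Minkowski-adding $\mathbb{R}_+^2$ can only delete extreme points of $\mathrm{conv}(\mathrm{supp})$, never promote an interior point to an extreme one, so the convex representations above remain valid in $\Gamma$. I do not anticipate any further obstacle, as the whole argument is a direct unpacking of the definition of the Newton diagram as the bounded portion of the boundary of $\Gamma$.
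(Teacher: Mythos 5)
Your proof is correct and follows essentially the same route as the paper: both arguments reduce the claim to the elementary convexity fact that a vertex of the Newton diagram is a corner (extreme point) of the convex hull $\Gamma$, which the collinearity and interior-point hypotheses rule out. The only difference is cosmetic — you certify non-extremality by exhibiting a nontrivial convex combination of support points, while the paper runs the dual supporting-line argument by contradiction; note also that your closing remark on Minkowski addition of $\mathbb{R}_+^2$ is superfluous, since $\mathrm{supp}\left(\mathscr{X}\right)\subset\Gamma$ already places your convex representations inside $\Gamma$.
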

\begin{proof}
$\rm (a)$\; Suppose that $p_2$ is a vertex of the Newton diagram of the vector field $\mathscr{X}$. This means that there exists a line $l$ through $p_2$ such that the convex hull $\Gamma$ of the set
\begin{align}\label{eq14}
	&\bigcup\limits_{\left(i,j\right)\in\text{supp}\left(\mathscr{X}\right)}\left(\left(i,j\right)+\mathbb{R}_+^2\right)
\end{align}
 is located completely in one half-plane separated by the line $l$.  If $p_1\in l$, then $p_3\in l$ and thus $p_2$ is not a vertex, which contradicts with the assumption. If $p_1\not\in l$, then $p_1\not\in\Gamma$ or $p_3\not\in\Gamma$, which is in contradiction with $p_1,p_3\in \Gamma$. Thus the statement $\rm(a)$ is proved.

$\rm(b)$\;Suppose that $p_{m+1}$ is a vertex of the Newton diagram of the vector field $\mathscr{X}$. Then there exists a straight line $l$ through $p_{m+1}$ such that the convex hull $\Gamma$ of the set \eqref{eq14} is located completely in one half-plane separated by the line $l$. Obviously, the line $l$ must intersect one of the edges $\overline{p_ip_{i+1}}$ of the polygon. We get that $p_i\not\in\Gamma$ or $p_{i+1}\not\in\Gamma$, which is in contradiction with $p_i,p_{i+1}\in \Gamma$. The proof is completed.
\end{proof}
\begin{lemma}\label{le-2}
Assume that the polynomial $R\left(x,y\right)\in\mathcal{P}_{k}^{\mathbf{t}}$ and the vector field $\mathscr{X}_k=\left(P_{k+t_1},Q_{k+t_2}\right)\in \mathcal{Q}_k^{\mathbf{t}}$. Let $m_1=\max\left\{\emph{deg}_xP_{k+t_1},1+\emph{deg}_xQ_{k+t_2}\right\}$, $m_2=\max\left\{1+\emph{deg}_yP_{k+t_1},\emph{deg}_yQ_{k+t_2}\right\}$, $m_3=\emph{deg}_xR$ and $m_4=\emph{deg}_yR$. The following statements hold.
  \begin{itemize}
    \item[\emph{(a)}] The support of the vector field $\mathscr{X}_k$ lies on the straight line $t_1x+t_2y=k+|\mathbf{t}|$, and $$V_1=\left(m_1,1+\frac{k+\left(1-m_1\right)t_1}{t_2}\right)\quad and\quad V_2=\left(1+\frac{k+\left(1-m_2\right)t_2}{t_1},m_2\right)$$
        are the vertices of the Newton diagram of $\mathscr{X}_k$.
    \item [\emph{(b)}] The support of the polynomial $R\left(x,y\right)$ lies on the straight line $t_1x+t_2y=k$, and
    $$V_1=\left(m_3,\frac{k-t_1m_3}{t_2}\right)\quad and \quad V_2=\left(\frac{k-t_2m_4}{t_1},m_4\right)$$
    are the vertices of the Newton diagram of $R\left(x,y\right)$.
  \end{itemize}
\end{lemma}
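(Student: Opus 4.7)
The key observation is that quasi-homogeneity forces the support to be collinear, so in both (a) and (b) the bounded part of the Newton diagram degenerates to a single line segment, and the only work is to identify its two endpoints. I would treat the cleaner case (b) first and then adapt the argument to (a).

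For part (b), I would start from the defining identity $R(\lambda^{t_1}x,\lambda^{t_2}y)=\lambda^k R(x,y)$ and expand $R=\sum c_{ij}x^iy^j$. Comparing coefficients of like monomials in $\lambda$ forces $t_1 i + t_2 j = k$ for every $(i,j)\in\mathrm{supp}(R)$, so $\mathrm{supp}(R)$ is contained in the affine line $t_1 x + t_2 y = k$. Because the support is collinear with negative slope, the bounded portion of the boundary of the convex hull $\Gamma$ in \eqref{eq-14} reduces to a single segment whose endpoints are the two extreme points of $\mathrm{supp}(R)$ in the horizontal and vertical directions. The extreme $x$-coordinate is $m_3=\deg_x R$; substituting $x=m_3$ into $t_1 x + t_2 y = k$ yields $V_1=(m_3,(k-t_1 m_3)/t_2)$. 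An analogous argument using the extreme $y$-coordinate $m_4$ produces $V_2=((k-t_2 m_4)/t_1,m_4)$.

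For part (a), I would apply the same idea, but recall from Section \ref{sub-1} that $\mathrm{supp}(\mathscr{X}_k)$ is defined through the polynomials $yP_{k+t_1}$ and $xQ_{k+t_2}$, not through $P_{k+t_1}$ and $Q_{k+t_2}$ directly. Writing $P_{k+t_1}=\sum p_{ij}x^iy^j$, quasi-homogeneity in degree $k+t_1$ forces $t_1 i + t_2 j = k + t_1$; multiplying by $y$ shifts $j$ to $j+1$ and gives $t_1 i + t_2(j+1) = k + t_1 + t_2 = k + |\mathbf{t}|$. The parallel computation for $xQ_{k+t_2}$ yields the same line, so $\mathrm{supp}(\mathscr{X}_k)\subset\{t_1 x + t_2 y = k+|\mathbf{t}|\}$. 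The maximum $x$-coordinate in the support is attained either from the $yP_{k+t_1}$ contribution (points $(i,j+1)$ with $i\le\deg_x P_{k+t_1}$) or from the $xQ_{k+t_2}$ contribution (points $(i+1,j)$ with $i+1\le 1+\deg_x Q_{k+t_2}$); the larger of these is exactly $m_1$, and solving $t_1 m_1 + t_2 y = k+|\mathbf{t}|$ rearranges to $y=1+(k+(1-m_1)t_1)/t_2$, recovering $V_1$. The endpoint $V_2$ is obtained by the same reasoning applied in the $y$-direction using $m_2$.

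No step constitutes a genuine obstacle: the content of the lemma is the combinatorial fact that quasi-homogeneity collapses the Newton diagram to a single segment, combined with the bookkeeping observation that the conventions defining $\mathrm{supp}(\mathscr{X}_k)$ via $yP$ and $xQ$ introduce the ``$+1$'' shifts appearing in the definitions of $m_1$ and $m_2$. The only mild subtlety is the degenerate situation in which $\mathrm{supp}(R)$ or $\mathrm{supp}(\mathscr{X}_k)$ consists of a single point; then $V_1=V_2$ and a direct substitution shows that both formulas produce this common vertex.
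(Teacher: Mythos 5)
Your proposal is correct and takes essentially the same route as the paper: you compare coefficients under $(x,y)\mapsto(\lambda^{t_1}x,\lambda^{t_2}y)$ to force $\mathrm{supp}$ of $yP_{k+t_1}$, $xQ_{k+t_2}$ (resp. $R$) onto the line $t_1x+t_2y=k+|\mathbf{t}|$ (resp. $t_1x+t_2y=k$), then read off the two endpoints from the extreme $x$- and $y$-degrees, with the ``$+1$'' shifts coming from the $yP$/$xQ$ convention in the definition of the support. This is exactly the paper's argument for (a), and the paper likewise disposes of (b) by the analogous computation; your extra remark on the degenerate single-point case is a harmless addition.
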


\begin{proof}
  $\rm (a)$\;  Let $yP_{k+t_1}\left(x,y\right)=\sum a_{ij}x^iy^j$ and $xQ_{k+t_2}\left(x,y\right)=\sum b_{ij}x^iy^j$. For arbitrary $\lambda\in \mathbb{R}^+$, we have
$$P_{k+t_1}\left(\lambda^{t_1}x,\lambda^{t_2}y\right)=\sum a_{ij}\lambda^{it_1+\left(j-1\right)t_2}x^iy^{j-1}=\lambda^{k+t_1}P_{k+t_1}\left(x,y\right)=\lambda^{k+t_1}\sum a_{ij}x^iy^{j-1}$$
and
$$Q_{k+t_2}\left(\lambda^{t_1}x,\lambda^{t_2}y\right)=\sum b_{ij}\lambda^{\left(i-1\right)t_1+jt_2}x^{i-1}y^j=\lambda^{k+t_2}Q_{k+t_2}\left(x,y\right)=\lambda^{k+t_2}\sum b_{ij}x^{i-1}y^j.$$
  Thus, $a_{ij}\lambda^{it_1+jt_2}=a_{ij}\lambda^{k+|\mathbf{t}|}$ and  $b_{ij}\lambda^{it_1+jt_2}=b_{ij}\lambda^{k+|\mathbf{t}|}$. To each $\left(i,j\right)\in \text{supp}\left(\mathscr{X}_k\right)$, it satisfies $it_1+jt_2=k+|\mathbf{t}|$ due to the fact that the corresponding vector coefficient $\left(a_{ij},b_{ij}\right)$ is nonzero. This means that the Newton diagram of $\mathscr{X}_k$ is a line segment and its end points
   $$V_1=\left(m_1,1+\frac{k+\left(1-m_1\right)t_1}{t_2}\right)\quad and\quad V_2=\left(1+\frac{k+\left(1-m_2\right)t_2}{t_1},m_2\right)$$
  are the vertices. Hence the conclusion $\rm (a)$ is confirmed.

  The statement $\rm (b)$ can be proved in a similar way.
\end{proof}

From Lemma \ref{le-2}, we have a corollary below.
\begin{corollary}\label{co-2}
Consider the vector field $\mathscr{X}=\sum\mathscr{X}_i$ with $\mathscr{X}_i\in\mathcal{Q}_i^{\mathbf{t}}$. Let $V_{1i}$ and $V_{2i}$ be the vertices of the Newton diagram of $\mathscr{X}_i$. Then the vertices of the Newton diagram of  $\mathscr{X}$ are contained in the set $\bigcup\limits_i\left\{V_{1i},V_{2i}\right\}\subset\emph{supp}\left(\mathscr{X}\right)$.
\end{corollary}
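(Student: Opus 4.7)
The plan is to combine Lemma \ref{le-2}(a) with Lemma \ref{le-3}(a) and argue by elimination: every point of $\text{supp}(\mathscr{X})$ that is not one of the endpoints $V_{1i},V_{2i}$ must lie strictly inside one of the line segments provided by Lemma \ref{le-2}(a), and such interior points cannot be vertices.

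First I would observe that for each $i$, Lemma \ref{le-2}(a) tells us that $\text{supp}(\mathscr{X}_i)$ is contained in the line $\ell_i:\ t_1 x + t_2 y = i + |\mathbf{t}|$, with $V_{1i}$ and $V_{2i}$ as the two extreme points of $\text{supp}(\mathscr{X}_i)$ along $\ell_i$. Since the lines $\ell_i$ for distinct $i$ are parallel and disjoint, no cancellation can occur when summing quasi-homogeneous components of different degrees, and hence
\begin{equation*}
\text{supp}(\mathscr{X}) \;=\; \bigcup_i \text{supp}(\mathscr{X}_i),
\end{equation*}
with each $\text{supp}(\mathscr{X}_i)\subset\overline{V_{1i}V_{2i}}$. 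In particular, $\bigcup_i\{V_{1i},V_{2i}\}\subset\text{supp}(\mathscr{X})$, which gives the inclusion asserted in the statement.

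Next I would take an arbitrary vertex $V$ of the Newton diagram of $\mathscr{X}$. By definition a vertex belongs to $\text{supp}(\mathscr{X})$, so there is some index $i$ with $V\in\text{supp}(\mathscr{X}_i)\subset\overline{V_{1i}V_{2i}}$. Either $V\in\{V_{1i},V_{2i}\}$, in which case $V$ already lies in $\bigcup_j\{V_{1j},V_{2j}\}$ and we are done, or else $V$ lies strictly between $V_{1i}$ and $V_{2i}$ on $\ell_i$. In the latter case the three points $V_{1i}$, $V$, $V_{2i}$ are collinear elements of $\text{supp}(\mathscr{X})$ (using the inclusion from the previous paragraph to place $V_{1i},V_{2i}$ in the full support) with $V$ in the middle, so Lemma \ref{le-3}(a) rules out $V$ being a vertex, a contradiction.

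There is no real obstacle here: the whole argument reduces to the geometric observation that the quasi-homogeneous decomposition stratifies $\text{supp}(\mathscr{X})$ along the parallel lines $\ell_i$, together with the collinearity lemma already proved. The only minor point to state carefully is that the supports on distinct $\ell_i$ cannot cancel in the sum, so that $\text{supp}(\mathscr{X})$ really equals the union of the supports of its quasi-homogeneous pieces, which is what allows the applications of Lemma \ref{le-3}(a) to go through.
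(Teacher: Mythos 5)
Your proposal is correct and follows essentially the same route as the paper: identify the index $i_0$ with $V\in\mathrm{supp}(\mathscr{X}_{i_0})$, use Lemma \ref{le-2}(a) to see that this support lies on the segment $\overline{V_{1i_0}V_{2i_0}}$, and invoke Lemma \ref{le-3}(a) to exclude interior points. The only difference is that you spell out the non-cancellation fact $\mathrm{supp}(\mathscr{X})=\bigcup_i\mathrm{supp}(\mathscr{X}_i)$, which the paper uses implicitly; this is a welcome but minor clarification.
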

\begin{proof}
  Let $V$ be a vertex of the Newton diagram of  $\mathscr{X}$. Since $V\in\text{supp}\left(\mathscr{X}\right)=\bigcup\limits_{i}\text{supp}\left(\mathscr{X}_i\right)$, there exists $i_0$ such that $V\in \text{supp}\left\{\mathscr{X}_{i_0}\right\}$. By the statement $\rm (a)$ of Lemma \ref{le-2}, the Newton diagram of $\mathscr{X}_{i_0}$ is a line segment with the end points $V_{1i_0}$ and $V_{2i_0}$. Using the statement $\rm (a)$ of Lemma \ref{le-3}, we have $V\in\left\{V_{1i_0},V_{2i_0}\right\}$. The proof is finished.
\end{proof}

\begin{lemma}\label{le-4}
Consider the vector field
  \begin{align}\label{eq-18}
 &\mathscr{Y}=\left(\left(x^2-y^2\right)\partial_y\mathcal{H}-2xy\partial_x\mathcal{H},\left(x^2-y^2\right)\partial_x\mathcal{H}+2xy\partial_y\mathcal{H}\right)
  \end{align}
  with $\mathcal{H}$ a homogeneous polynomial of degree $k$ in the variables $x$, $y$. Let the homogeneous  polynomial $\mathcal{H}=\sum_{i=0}^{m}h_{m-i,k-m+i}x^{m-i}y^{k-m+i}=\sum_{i=0}^n\tilde{h}_{k-n+i,n-i}x^{k-n+i}y^{n-i}$, where $m=\emph{deg}_x\mathcal{H}$, $n=\emph{deg}_y\mathcal{H}$ and $h_{m,k-m}\tilde{h}_{k-n,n}\neq0$. The following statements hold.
  \begin{itemize}
   \item[\emph{(a)}] $\emph{supp}\left(\left(x^2+y^2\right)\mathcal{H}\left(x,y\right)\right)\subset\emph{supp}\left(\mathscr{Y}\right)$.
    \item[\emph{(b)}] The Newton diagrams of the vector field $\mathscr{Y}$ and the polynomial $\left(x^2+y^2\right)\mathcal{H}\left(x,y\right)$ have the same vertices $V_1=\left(m+2,k-m\right)$ and $V_2=\left(k-n,n+2\right)$.
   \item[\emph{(c)}]  For the Newton diagram of the vector field $\mathscr{Y}$, the vector coefficients  associated with the vertices $V_1$ and $V_2$ are
  $\left(\left(k\!-\!m\right)h_{m,k-m},\left(2k\!-\!m\right)h_{m,k-m}\right)$ and $\left(\left(n\!-\!2k\right)\tilde{h}_{k-n,n},\left(n\!-\!k\right)\tilde{h}_{k-n,n}\right)$, respectively.
  \end{itemize}
\end{lemma}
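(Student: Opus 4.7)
The plan is a direct computation driven by homogeneity. Since $\mathcal{H}$ is homogeneous of degree $k$, the polynomial $(x^2+y^2)\mathcal{H}$ is homogeneous of degree $k+2$ and each component of $\mathscr{Y}$ is homogeneous of degree $k+1$; writing $\mathscr{Y}=(P,Q)$, both $yP$ and $xQ$, as well as $(x^2+y^2)\mathcal{H}$, are therefore homogeneous of degree $k+2$, so $\text{supp}(\mathscr{Y})$ and $\text{supp}((x^2+y^2)\mathcal{H})$ both lie on the line $x+y=k+2$. By Lemma \ref{le-3}(a), the Newton diagram of each consists of a single line segment on this line whose vertices are exactly the leftmost and rightmost points of its support.

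The first concrete step is to expand the action of $\mathscr{Y}$ on a single monomial. A term $h_{p,q}x^p y^q$ of $\mathcal{H}$ (with $p+q=k$) contributes $h_{p,q}\bigl[q\,x^{p+2}y^q-(k+p)\,x^p y^{q+2}\bigr]$ to $yP$ and $h_{p,q}\bigl[(k+q)\,x^{p+2}y^q-p\,x^p y^{q+2}\bigr]$ to $xQ$, after using $2p+q=k+p$ and $p+2q=k+q$. Summing over all monomials of $\mathcal{H}$, the coefficient of $x^a y^b$ with $a+b=k+2$ in $yP$ is $b\,h_{a-2,b}-(k+a)\,h_{a,b-2}$ and in $xQ$ is $(k+b)\,h_{a-2,b}-a\,h_{a,b-2}$, with the convention $h_{p,q}=0$ outside $\text{supp}(\mathcal{H})$.

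To prove (a), I fix $(a,b)\in\text{supp}((x^2+y^2)\mathcal{H})$, so at least one of $h_{a-2,b}$ and $h_{a,b-2}$ is nonzero, and assume for contradiction that both coefficient formulas above vanish. If exactly one of the two $h$-entries is nonzero, one of the two equations immediately forces it to vanish as well (since $k+a,k+b>0$), a contradiction. If both are nonzero, eliminating the ratio $h_{a-2,b}/h_{a,b-2}$ between the two equations yields $(k+a)(k+b)=ab$, which combined with $a+b=k+2$ simplifies to $2k(k+1)=0$, impossible for $k\geq 1$. For (b) and (c), I evaluate the coefficient formulas at the two extreme points of the line segment. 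The rightmost monomial of $(x^2+y^2)\mathcal{H}$ is $h_{m,k-m}x^{m+2}y^{k-m}$ and the leftmost is $\tilde{h}_{k-n,n}x^{k-n}y^{n+2}$, giving the claimed vertices $V_1$ and $V_2$. At $V_1$ only the monomial $h_{m,k-m}$ contributes (since $h_{m+2,k-m-2}=0$), yielding the vector coefficient $((k-m)h_{m,k-m},(2k-m)h_{m,k-m})$, whose second component is nonzero because $2k-m\geq k\geq 1$; an analogous computation at $V_2$ gives $((n-2k)\tilde{h}_{k-n,n},(n-k)\tilde{h}_{k-n,n})$ with nonzero first component. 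Thus $V_1,V_2\in\text{supp}(\mathscr{Y})$ and, being the extreme points on the line $x+y=k+2$, they are the vertices of the Newton diagram of $\mathscr{Y}$, confirming (b) and (c) simultaneously.

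The main obstacle is the cancellation analysis in (a): because two distinct monomials of $\mathcal{H}$ may contribute to the same point of $\text{supp}(\mathscr{Y})$, ruling out simultaneous cancellation in both $yP$ and $xQ$ requires combining the two coefficient equations with the linear constraint $a+b=k+2$ into an arithmetic identity in $k$ alone. The parallel issue at the vertices $V_1$ and $V_2$ is avoided automatically because only one monomial of $\mathcal{H}$ contributes to each extreme point of the support.
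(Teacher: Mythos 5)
Your proposal is correct, and it reaches the conclusion by a route that differs from the paper's in an interesting way. For statement (a) the paper uses the one-line Euler-type identity $x\bigl(\left(x^2-y^2\right)\partial_x\mathcal{H}+2xy\partial_y\mathcal{H}\bigr)-y\bigl(\left(x^2-y^2\right)\partial_y\mathcal{H}-2xy\partial_x\mathcal{H}\bigr)=k\left(x^2+y^2\right)\mathcal{H}$, so a nonzero coefficient of $\left(x^2+y^2\right)\mathcal{H}$ forces a nonzero vector coefficient of $\mathscr{Y}$ at the same point; you instead derive the explicit per-monomial formulas (coefficient of $x^ay^b$ equal to $b\,h_{a-2,b}-(k+a)h_{a,b-2}$ in $yP$ and $(k+b)h_{a-2,b}-a\,h_{a,b-2}$ in $xQ$) and rule out simultaneous cancellation via the determinant identity $(k+a)(k+b)-ab=k(k+a+b)=2k(k+1)\neq0$. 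Your argument is longer but actually proves slightly more: it shows every point with $h_{a-2,b}\neq0$ or $h_{a,b-2}\neq0$ lies in $\mathrm{supp}\left(\mathscr{Y}\right)$, whereas the paper's identity only captures points where $h_{a-2,b}+h_{a,b-2}\neq0$. For (b) and (c) the two proofs are close in spirit, but yours is more uniform: evaluating your coefficient formulas at the extreme points $\left(m+2,k-m\right)$ and $\left(k-n,n+2\right)$ gives the vector coefficients at once, with the nonvanishing of $2k-m$ and $n-2k$ guaranteeing membership in the support, while the paper computes $\deg_x$ of $yP$ and $xQ$ with a case split $m<k$ versus $m=k$ and then invokes Lemma \ref{le-2}. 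Two small points worth making explicit in your write-up: from your formulas one should note that no point with $a>m+2$ (resp.\ $b>n+2$) can lie in $\mathrm{supp}\left(\mathscr{Y}\right)$, since then both $h_{a-2,b}$ and $h_{a,b-2}$ vanish, which is what makes $V_1$ and $V_2$ the extreme points of $\mathrm{supp}\left(\mathscr{Y}\right)$ and not merely of $\mathrm{supp}\left(\left(x^2+y^2\right)\mathcal{H}\right)$; and both your argument and the paper's implicitly require $k\geq1$ (true in all applications, where $k=i+j\geq2$), which you at least invoke explicitly.
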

\begin{proof}
$\rm(a)$\; Since $x\left(\left(x^2-y^2\right)\partial_x\mathcal{H}+2xy\partial_y\mathcal{H}\right)
-y\left(\left(x^2-y^2\right)\partial_y\mathcal{H}-2xy\partial_x\mathcal{H}\right)=k\left(x^2+y^2\right)\mathcal{H}$,
one has the statement $\rm(a)$ holds.

Next we prove the statements $\rm(b)$ and $\rm(c)$.

For $m=0$, the conclusion holds obviously.  For $m\geq1$,
we distinguish between the cases $m<k$ and $m=k$.

If $m<k$, then
$$\partial_x\mathcal{H}=\sum_{i=0}^{m-1}\left(m-i\right)h_{m-i,k-m+i}x^{m-i-1}y^{k-m+i}$$
and
  $$\partial_y\mathcal{H}=\sum_{i=0}^{m}\left(k-m+i\right)h_{m-i,k-m+i}x^{m-i}y^{k-m+i-1}.$$ We have
  \begin{equation*}\label{eq-19}
  y\left(\left(x^2-y^2\right)\partial_y\mathcal{H}-2xy \partial_x\mathcal{H}\right)=\left(k-m\right)h_{m,k-m}x^{m+2}y^{k-m}+\varphi\left(x,y\right)\\[2ex]
  \end{equation*}
  and
  \begin{equation*}\label{eq-20}
x\left(\left(x^2-y^2\right)\partial_x\mathcal{H}+2xy \partial_y\mathcal{H}\right)=\left(2k-m\right)h_{m,k-m}x^{m+2}y^{k-m}+\psi\left(x,y\right),\\[2ex]
  \end{equation*}
where $\text{deg}_x\varphi<m+2$ and $\text{deg}_x\psi<m+2$. So,
  $$\text{deg}_x\left(y\left(\left(x^2-y^2\right)\partial_y\mathcal{H}-2xy\partial_x\mathcal{H}\right)\right)=
  \text{deg}_x\left(x\left(\left(x^2-y^2\right)\partial_x\mathcal{H}+2xy\partial_y\mathcal{H}\right)\right)=m+2.$$

If $m=k$, then  $\partial_x\mathcal{H}=\sum_{i=0}^{k-1}\left(k-i\right)h_{k-i,i}x^{k-i-1}y^{i}$ and
  $\partial_y\mathcal{H}=\sum_{i=1}^kih_{k-i,i}x^{k-i}y^{i-1}$. We get
\begin{equation*}\label{eq-21}
\text{deg}_x\left(y\left(\left(x^2-y^2\right)\partial_y\mathcal{H}-2xy\partial_x\mathcal{H}\right)\right)<k+2\\[2ex]
\end{equation*}
and
 \begin{equation*}\label{eq-22}
x\left(\left(x^2-y^2\right)\partial_x\mathcal{H}+2xy\partial_y\mathcal{H}\right)=kh_{k,0}x^{k+2}+\widetilde{\psi}\left(x,y\right)\\[2ex]
 \end{equation*}
with $\text{deg}_x\widetilde{\psi}<k+2.$

On the other hand, we have $\mathscr{Y}\in\mathcal{Q}_k^{\mathbf{t}}$ and $\mathcal{H}\in\mathcal{P}_{k}^{\mathbf{t}}$ with $\mathbf{t}=\left(1,1\right)$. Thus, by Lemma \ref{le-2}, the Newton diagrams of the vector field $\mathscr{Y}$ and the polynomial $\left(x^2+y^2\right)\mathcal{H}\left(x,y\right)$ have the vertex $V_1=\left(m+2,k-m\right)$.  For the Newton diagram of $\mathscr{Y}$, the vector coefficient of the vertex $V_1$ is  $\left(\left(k-m\right)h_{m,k-m},\left(2k-m\right)h_{m,k-m}\right)$.
\iffalse
{\bf{For $1\leq m\leq k$,
$$\partial_x\mathcal{H}=\sum_{i=0}^{m-1}\left(m-i\right)h_{m-i,k-m+i}x^{m-i-1}y^{k-m+i}$$
and
  $$\partial_y\mathcal{H}=\sum_{i=0}^{m}\left(k-m+i\right)h_{m-i,k-m+i}x^{m-i}y^{k-m+i-1}.$$ We have
  \begin{align}\label{eq-19}
  &y\left(\left(x^2-y^2\right)\partial_y\mathcal{H}-2xy \partial_x\mathcal{H}\right)=\left(k-m\right)h_{m,k-m}x^{m+2}y^{k-m}+\varphi\left(x,y\right)
  \end{align}
  and
  \begin{align}\label{eq-20}
&x\left(\left(x^2-y^2\right)\partial_x\mathcal{H}+2xy \partial_y\mathcal{H}\right)=\left(2k-m\right)h_{m,k-m}x^{m+2}y^{k-m}+\psi\left(x,y\right),
  \end{align}
where $\text{deg}_x\varphi<m+2$ and $\text{deg}_x\psi<m+2$. So,
$$\text{deg}_x\left(y\left(\left(x^2-y^2\right)\partial_y\mathcal{H}-2xy\partial_x\mathcal{H}\right)\right)\leq m+2,$$
$$\text{deg}_x\left(x\left(\left(x^2-y^2\right)\partial_x\mathcal{H}+2xy\partial_y\mathcal{H}\right)\right)=m+2.$$

On the other hand, we have $\mathscr{Y}\in\mathcal{Q}_k^{\mathbf{t}}$ and $\mathcal{H}\in\mathcal{P}_{k}^{\mathbf{t}}$ with $\mathbf{t}=\left(1,1\right)$. Thus, by Lemma \ref{le-2}, the Newton diagrams of the vector field $\mathscr{Y}$ and the polynomial $\left(x^2+y^2\right)\mathcal{H}\left(x,y\right)$ have the vertex $V_1=\left(m+2,k-m\right)$.  For the Newton diagram of $\mathscr{Y}$, the vector coefficient of the vertex $V_1$ is  $\left(\left(k-m\right)h_{m,k-m},\left(2k-m\right)h_{m,k-m}\right)$.}}
\fi

The proof of the vertex $V_2$ is analogous to the proof of the vertex $V_1$, and thus we omit it here.
\end{proof}

Let $f\left(x,y\right)=\sum\nolimits_{i=1}^{n}f_i\left(x,y\right)$, $g\left(x,y\right)=\sum\nolimits_{j=1}^{m}g_j\left(x,y\right)$ and $d=\max\{n,m\}$, where $f_i\left(x,y\right)$ and $g_j\left(x,y\right)$ are homogeneous polynomials of degree $i$ and $j$, respectively. From the equation \eqref{eq-5}, the expression of $b\left(\mathcal{X}\right)$  is given by
\begin{align}\label{eq-6}
	\begin{cases}
		\dot{u}=&\sum\limits_{i=1}^d\sum\limits_{j=1}^d\left(u^2+v^2\right)^{2d-i-j}\left[\left(u^2-v^2\right)\left(f_i\left(u,v\right)f_{jy}\left(u,v\right)
		+g_i\left(u,v\right)g_{jy}\left(u,v\right)\right)\right.\\
		&\left.-2uv\left(f_i\left(u,v\right)f_{jx}\left(u,v\right)+g_i\left(u,v\right)g_{jx}\left(u,v\right)\right)\right],\\
		\specialrule{0em}{3pt}{3pt}
		\dot{v}=&\sum\limits_{i=1}^d\sum\limits_{j=1}^d\left(u^2+v^2\right)^{2d-i-j}\left[\left(u^2-v^2\right)\left(f_i\left(u,v\right)f_{jx}\left(u,v\right)
		+g_i\left(u,v\right)g_{jx}\left(u,v\right)\right)\right.\\
		&\left.+2uv\left(f_i\left(u,v\right)f_{jy}\left(u,v\right)+g_i\left(u,v\right)g_{jy}\left(u,v\right)\right)\right].
	\end{cases}
\end{align}
Furthermore, the vector field $b\left(\mathcal{X}\right)$ can be rewritten as  the sum of its  homogeneous components
\begin{align}\label{eq-24}
	&b\left(\mathcal{X}\right)=\frac{1}{2}\sum_{i=1}^d\mathcal{F}_{4d+1-2i}+\sum_{1\leq i<j\leq d}\mathcal{F}_{4d+1-i-j},
\end{align}
where
\begin{small}
	\begin{align}\label{eq-25}
		&\mathcal{F}_{4d+1-i-j}=\left(
		\begin{array}{l}
			\left(u^2+v^2\right)^{2d-i-j}\left[\left(u^2-v^2\right)\partial_v\left(f_if_j+g_ig_j\right)
			-2uv\partial_u\left(f_if_j+g_ig_j\right)\right]\\
			\left(u^2+v^2\right)^{2d-i-j}\left[\left(u^2-v^2\right)\partial_u\left(f_if_j+g_ig_j\right)
			+2uv\partial_v\left(f_if_j+g_ig_j\right)\right]\\
		\end{array}
		\right)^T\in\mathcal{Q}_{4d+1-i-j}^{\left(1,1\right)}
	\end{align}
\end{small}for $i,j=1,\ldots,d$. Note that $f_{jx}(u,v)=f_{ju}(u,v)$, $f_{jy}(u,v)=f_{jv}(u,v)$, $g_{jx}(u,v)=g_{ju}(u,v)$ and $g_{jy}(u,v)=g_{jv}(u,v)$.
\begin{proposition}\label{pr-6}
Consider the vector field
\begin{align}\label{eq-23}
&\widetilde{\mathscr{Y}}=\frac{1}{2}\sum_{i=1}^d\mathcal{F}_{4d+1-2i},
\end{align}
where $\mathcal{F}_{4d+1-2i}$ are given by the equation \eqref{eq-25} with $i=1,\ldots,d$. Then the Newton diagrams of $\widetilde{\mathscr{Y}}$ and $b\left(\mathcal{X}\right)$ given in \eqref{eq-24} have the same vertices.
\end{proposition}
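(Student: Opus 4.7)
The plan is to use Lemma~\ref{le-4} to identify the Newton-diagram vertices of each quasi-homogeneous piece $\mathcal{F}_{4d+1-i-j}$, and then use a subadditivity inequality on the degrees $m_{ij}=\deg_u(f_if_j+g_ig_j)$ and $n_{ij}=\deg_v(f_if_j+g_ig_j)$ together with Lemma~\ref{le-3} to show that every ``cross-term'' vertex (those arising from $i<j$) either lies collinearly between two diagonal vertices or sits in the interior of a small polygon built from the diagonal vertices, and therefore fails to be a Newton-diagram vertex of $b(\mathcal{X})$. Writing $\mathcal{F}_{4d+1-i-j}=(u^2+v^2)^{2d-i-j}\,\mathscr{Y}^{(ij)}$ with $\mathscr{Y}^{(ij)}$ the vector field~\eqref{eq-18} associated to $\mathcal{H}_{ij}=f_if_j+g_ig_j$ (homogeneous of degree $i+j$), Lemma~\ref{le-4}(b,c) supplies the Newton-diagram vertices of $\mathscr{Y}^{(ij)}$; after accounting for the degree shift induced by the factor $(u^2+v^2)^{2d-i-j}$, the vertices of $\mathcal{F}_{4d+1-i-j}$ are
\[
V_1^{(ij)}=(m_{ij}+4d-2i-2j+2,\ i+j-m_{ij}),\qquad V_2^{(ij)}=(i+j-n_{ij},\ n_{ij}+4d-2i-2j+2).
\]
Corollary~\ref{co-2} then places the Newton-diagram vertices of $\widetilde{\mathscr{Y}}$ in $\bigcup_{i}\{V_1^{(ii)},V_2^{(ii)}\}$ and those of $b(\mathcal{X})$ in $\bigcup_{i\le j}\{V_1^{(ij)},V_2^{(ij)}\}$, so the proposition reduces to proving that for each $1\le i<j\le d$ neither $V_1^{(ij)}$ nor $V_2^{(ij)}$ is a Newton-diagram vertex of $b(\mathcal{X})$.

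The crucial estimates are $m_{ij}\le\tfrac12(m_{ii}+m_{jj})$ and $n_{ij}\le\tfrac12(n_{ii}+n_{jj})$, which I would verify from $m_{kk}=2\max(\alpha_k,\beta_k)$ and $m_{ij}\le\max(\alpha_i+\alpha_j,\beta_i+\beta_j)$ (with $\alpha_k=\deg_u f_k$, $\beta_k=\deg_u g_k$) via the elementary subadditivity $\max(a+b,c+d)\le\max(a,c)+\max(b,d)$. Translating into coordinates, the $u$-component of $V_1^{(ij)}$ equals that of the midpoint of $V_1^{(ii)}$ and $V_1^{(jj)}$ when $m_{ij}=\tfrac12(m_{ii}+m_{jj})$ and is strictly smaller otherwise. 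In the equality case the three points $V_1^{(ii)},V_1^{(ij)},V_1^{(jj)}$ are collinear with $V_1^{(ij)}$ strictly between the other two, so Lemma~\ref{le-3}(a) rules $V_1^{(ij)}$ out as a Newton-diagram vertex of $b(\mathcal{X})$; the analogous argument with $V_2^{(ii)},V_2^{(jj)}$ handles $V_2^{(ij)}$.

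In the strict case $m_{ij}<\tfrac12(m_{ii}+m_{jj})$ the vertex $V_1^{(ij)}$ slides to the upper-left of that midpoint along the line $u+v=4d+2-i-j$; a direct coordinate check using both bounds $m_{ij}\le\tfrac12(m_{ii}+m_{jj})$ and $n_{ij}\le\tfrac12(n_{ii}+n_{jj})$ places $V_1^{(ij)}$ strictly inside the convex quadrilateral with corners $V_1^{(ii)}, V_2^{(ii)}, V_2^{(jj)}, V_1^{(jj)}$, so Lemma~\ref{le-3}(b) excludes $V_1^{(ij)}$ from the Newton diagram of $b(\mathcal{X})$. The same reasoning disposes of $V_2^{(ij)}$. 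Since adding support points that already lie in $\mathrm{conv}\,\mathrm{supp}(\widetilde{\mathscr{Y}})+\mathbb{R}_+^2$ cannot dislodge any pre-existing extreme point, the Newton diagrams of $\widetilde{\mathscr{Y}}$ and $b(\mathcal{X})$ share the same vertex set. The main obstacle is the strict-inequality subcase: one must verify that $V_1^{(ij)}$ really sits in the open interior of the quadrilateral for every admissible configuration, which requires a short case analysis based on the signs of $\alpha_i-\beta_i$, $\alpha_j-\beta_j$ and their $n$-analogues $\gamma_i-\delta_i$, $\gamma_j-\delta_j$.
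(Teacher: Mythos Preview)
Your approach is essentially the paper's: use Lemma~\ref{le-4} to locate the two Newton-diagram endpoints of each homogeneous piece $\mathcal{F}_{4d+1-i-j}$, use the subadditivity bound $m_{ij}\le m_i+m_j$ (equivalently $m_{ij}\le\tfrac12(m_{ii}+m_{jj})$), and then apply Lemma~\ref{le-3} to kill the cross-term endpoints. The only substantive difference is in organizing the geometry. The paper does not split into ``equality vs.\ strict'' in the degree bound, nor does it need any analysis of the signs $\alpha_i-\beta_i$, $\alpha_j-\beta_j$. Instead it introduces the two auxiliary points
\[
A=\bigl(i+j-n_i-n_j,\ 2(2d+1-i-j)+n_i+n_j\bigr),\qquad
B=\bigl(2(2d+1-i-j)+m_i+m_j,\ i+j-m_i-m_j\bigr),
\]
which are precisely the midpoints of the segments $V_2^{(ii)}V_2^{(jj)}$ and $V_1^{(ii)}V_1^{(jj)}$, and observes from the degree bounds that both $V_1^{(ij)}$ and $V_2^{(ij)}$ lie on the segment $AB$. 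The case split is then on whether the quadrilateral $V_1^{(ii)},V_2^{(ii)},V_2^{(jj)},V_1^{(ij)}$ degenerates (i.e.\ whether $V_1^{(ii)}=V_2^{(ii)}$ or $V_1^{(jj)}=V_2^{(jj)}$), which immediately places $AB$ either on a single segment (Lemma~\ref{le-3}(a)) or inside the triangle/quadrilateral (Lemma~\ref{le-3}(b)). This handles all boundary situations uniformly, so your anticipated ``main obstacle'' disappears: there is no need for any sign-based subcase analysis, and your claim that $V_1^{(ij)}$ lies in the \emph{open} interior is not always true (it can land on an edge, where Lemma~\ref{le-3}(a) applies instead).
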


\begin{proof}
 It is sufficient to prove that the vertices of the Newton diagram of $\mathcal{F}_{4d+1-i-j}$  are not the vertices of the Newton diagram of $b\left(\mathcal{X}\right)$ for $i\neq j$.

From Lemma \ref{le-4}, the Newton diagrams of $\mathcal{F}_{4d+1-i-j}$ and $\left(u^2+v^2\right)^{2d+1-i-j}\left(f_if_j+g_ig_j\right)$ have the same vertices for $i,j=1,\ldots,d$.

Let $m_i=\max\left\{\text{deg}_xf_i,\text{deg}_xg_i\right\}$, $m_j=\max\left\{\text{deg}_xf_j,\text{deg}_xg_j\right\}$, $n_i=\max\left\{\text{deg}_yf_i,\text{deg}_yg_i\right\}$ and $n_j=\max\left\{\text{deg}_yf_j,\text{deg}_yg_j\right\}$. For $i\neq j$, the following relations hold:
$$\text{deg}_x\left(f_i^2+g_i^2\right)=2m_i,\;m_{i+j}:=\text{deg}_x\left(f_if_j+g_ig_j\right)\leq m_i+m_j,\;\text{deg}_x\left(f_j^2+g_j^2\right)=2m_j;$$ $$\text{deg}_y\left(f_i^2+g_i^2\right)=2n_i,\;n_{i+j}:=\text{deg}_y\left(f_if_j+g_ig_j\right)\leq n_i+n_j,\;\text{deg}_y\left(f_j^2+g_j^2\right)=2n_j.$$
Applying Lemmas \ref{le-2} and \ref{le-4} again,  we obtain that the vertices of the Newton diagram of $\mathcal{F}_{4d+1-2i}$ are
\begin{small}
\begin{align}\label{V12even}
&V_{1,2i}=\left(2\left(2d+1-2i+m_i\right),2\left(i-m_i\right)\right)\quad \text{and} \quad V_{2,2i}=\left(2\left(i-n_i\right),2\left(2d+1-2i+n_i\right)\right);
\end{align}
\end{small}the vertices of the Newton diagram of $\mathcal{F}_{4d+1-i-j}$ are
\begin{small}
\begin{align*}
&V_{1,i+j}=\left(2\left(2d+1-i-j\right)\!+\!m_{i+j},i+j\!-\!m_{i+j}\right)\quad \text{and} \quad V_{2,i+j}=\left(i\!+\!j\!-\!n_{i+j},2\left(2d+1-i-j\right)\!+\!n_{i+j}\right);
\end{align*}
\end{small}and the vertices of the Newton diagram of $\mathcal{F}_{4d+1-2j}$ are
\begin{small}
\begin{align*}
&V_{1,2j}=\left(2\left(2d+1-2j+m_j\right),2\left(j-m_j\right)\right)\quad \text{and} \quad V_{2,2j}=\left(2\left(j-n_j\right),2\left(2d+1-2j+n_j\right)\right).
\end{align*}
\end{small}Without loss of generality, we can assume $i<j$. From Lemma \ref{le-2}, it follows that $V_{1,2i}$ and $V_{2,2i}$ lie on the straight line $l_1: x+y=2\left(2d+1-i\right)$, $V_{1,i+j}$ and $V_{2,i+j}$ lie on the straight line $l_2: x+y=2\left(2d+1\right)-i-j$, and $V_{1,2j}$ and $V_{2,2j}$ lie on the straight line $l_3: x+y=2\left(2d+1-j\right)$. Consequently, the vertices $V_{1,2i}$, $V_{2,2i}$, $V_{1,2j}$ and $V_{2,2j}$ have three possible configurations  given in Figure \ref{Fig-5}, where $A=\left(i+j-n_i-n_j,2\left(2d+1-i-j\right)+n_i+n_j\right)$ and $B=\left(2\left(2d+1-i-j\right)+m_i+m_j,i+j-m_i-m_j\right)$.

\begin{figure}[H]
\scriptsize
\centering
\begin{minipage}{0.225\linewidth}
\centering
\psfrag{0}{$x$}
\psfrag{1}{$y$}
\psfrag{2}{$l_3$}
\psfrag{3}{$l_2$}
\psfrag{4}{$l_1$}
\psfrag{5}{$V_j$}
\psfrag{6}{$C$}
\psfrag{7}{$V_i$}
\centerline{\includegraphics[width=1\textwidth]{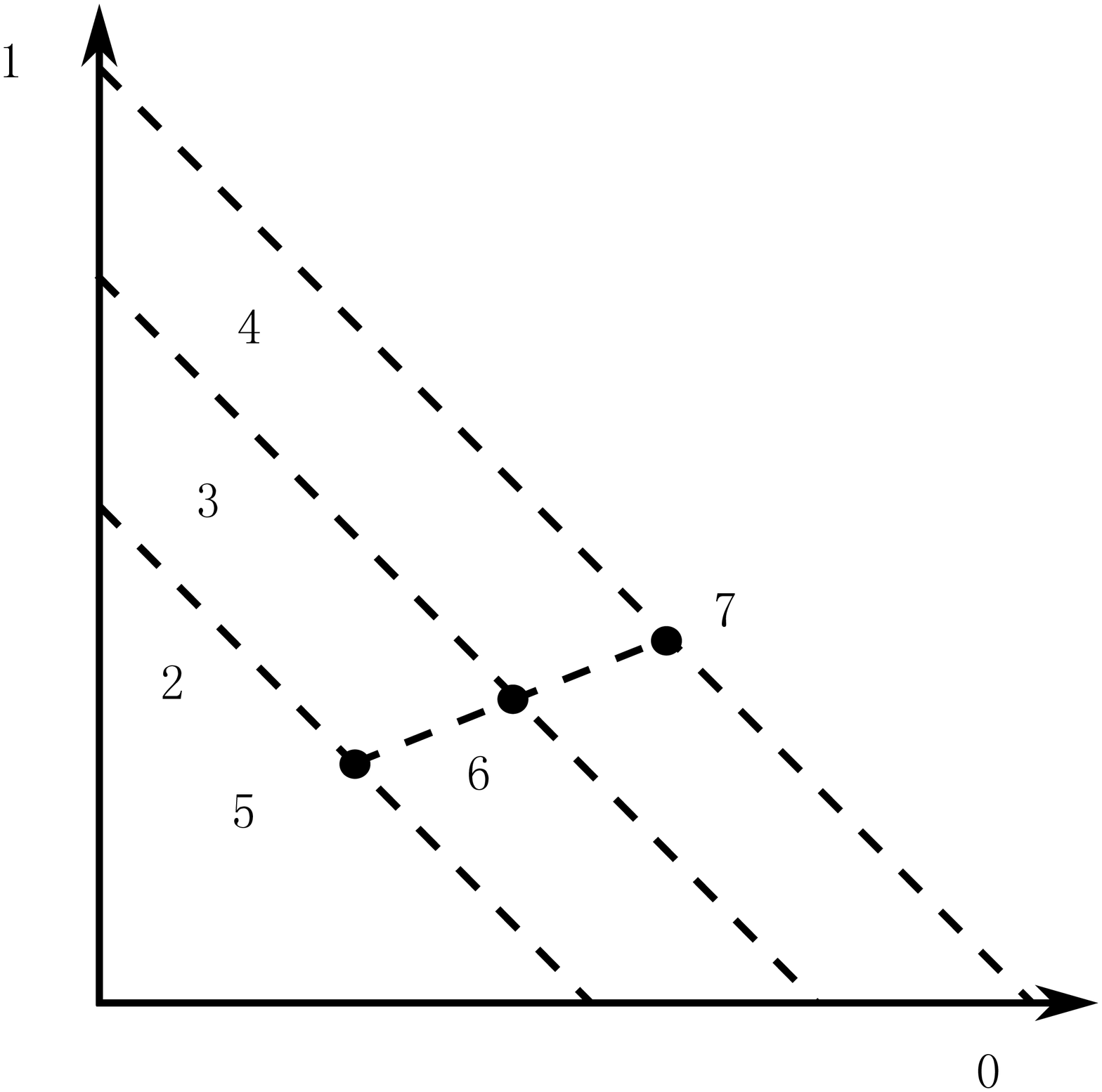}}
\centerline{(i)\;$V_i=V_{1,2i}=V_{2,2i}$\;and\;$V_j=V_{1,2j}=V_{2,2j}$}
\end{minipage}
\qquad
\qquad
\qquad
\begin{minipage}{0.225\linewidth}
\centering
\psfrag{0}{$x$}
\psfrag{1}{$y$}
\psfrag{2}{$l_3$}
\psfrag{3}{$l_2$}
\psfrag{4}{$l_1$}
\psfrag{5}{$V_i$}
\psfrag{6}{$A$}
\psfrag{7}{$V_{2,2j}$}
\psfrag{8}{$V_{1,2j}$}
\psfrag{9}{$B$}
\centerline{\includegraphics[width=1\textwidth]{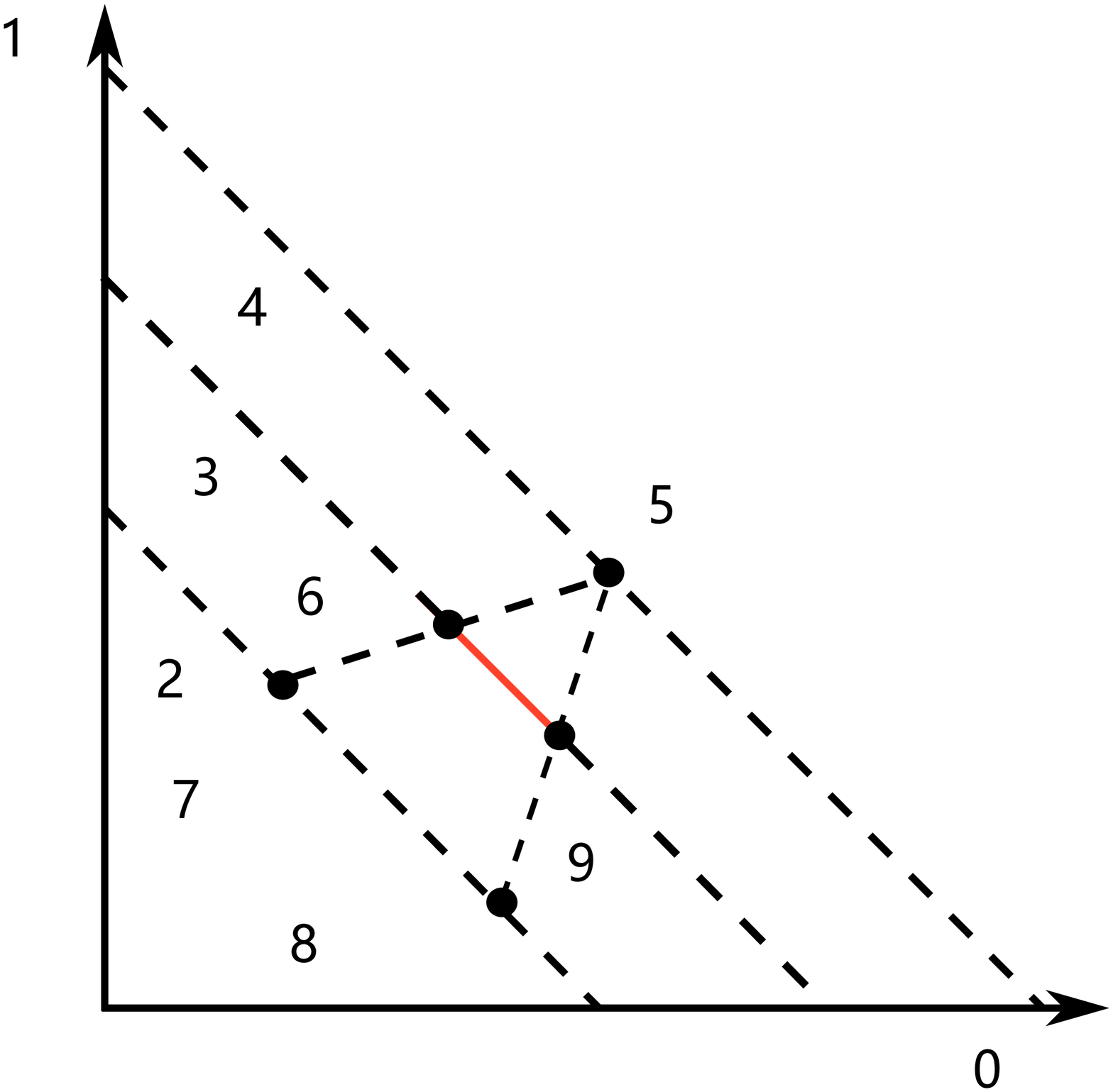}}
\centerline{(ii)\;$V_i=V_{1,2i}=V_{2,2i}$\;and\;$V_{1,2j}\neq V_{2,2j}$}
\end{minipage}
\qquad
\qquad
\begin{minipage}{0.225\linewidth}
\centering
\psfrag{0}{$x$}
\psfrag{1}{$y$}
\psfrag{2}{$l_3$}
\psfrag{3}{$l_2$}
\psfrag{4}{$l_1$}
\psfrag{5}{$V_{2,2j}$}
\psfrag{6}{$A$}
\psfrag{7}{$V_{2,2i}$}
\psfrag{8}{$V_{1,2i}$}
\psfrag{9}{$B$}
\psfrag{10}{$V_{1,2j}$}
\centerline{\includegraphics[width=1\textwidth]{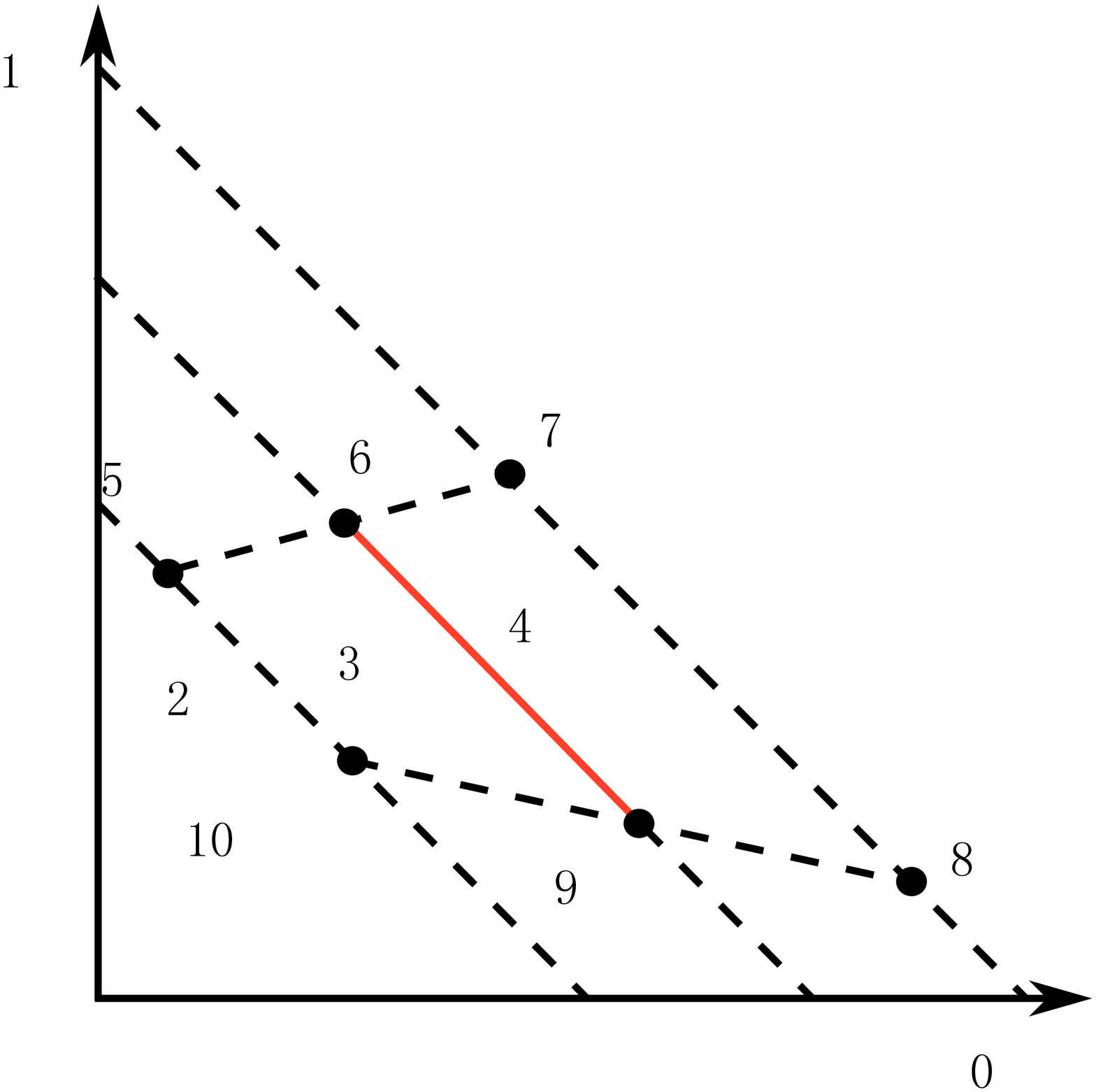}}
\centerline{(iii)\;$V_{1,2i}\neq V_{2,2i}$\;and\;$V_{1,2j}\neq V_{2,2j}$}
\end{minipage}
\caption{Possible configurations of the vertices.}\label{Fig-5}
\end{figure}
{\bf Configuration (i):} If $V_i=V_{1,2i}=V_{2,2i}$ and $V_j=V_{1,2j}=V_{2,2j}$, then we get that $C=V_{1,i+j}=V_{2,i+j}$ is located at the  line segment determined by $V_i$ and $V_j$, and it is not the vertex of the Newton diagram of $b\left(\mathcal{X}\right)$ from Lemma \ref{le-3}.

{\bf Configuration (ii):} If $V_i=V_{1,2i}=V_{2,2i}$ and $V_{1,2j}\neq V_{2,2j}$, then $V_{1,i+j}$ and $V_{2,i+j}$  belong to the line segment $AB$, where $A$ and $B$ belong to the line segments $V_iV_{2,2j}$ and $V_iV_{1,2j}$, respectively.  Thus, by Lemma \ref{le-3}, $V_{1,i+j}$ and $V_{2,i+j}$ are not the vertices of the Newton diagram of $b\left(\mathcal{X}\right)$. Analogously, one can prove the case when $V_{1,2i}\neq V_{2,2i}$ and $V_{1,2j}=V_{2,2j}$.

{\bf Configuration (iii):} By the same reasons as {\bf Configuration (ii)}, $V_{1,i+j}$ and $V_{2,i+j}$ are not the vertices of the Newton diagram of $b\left(\mathcal{X}\right)$.

The proof of the proposition is completed.
\end{proof}

The next  result can be found in \cite{tian2021necessary}, which describes a property of the origin of the vector field $b\left(\mathcal{X}\right)$.
\begin{proposition}[see \cite{tian2021necessary}]\label{pr-5}
	Let $F=\left(f,g\right):\mathbb{R}^2\rightarrow \mathbb{R}^2$ be a polynomial map such that $F\left(0,0\right)=\left(0,0\right)$ and  $\det DF\left(x,y\right)\neq0$ for all  $\left(x,y\right)\in\mathbb{R}^2$. Then the origin of the vector field $b\left(\mathcal{X}\right)$ has no parabolic sectors.
\end{proposition}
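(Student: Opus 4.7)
The plan is to exploit the Hamiltonian structure of $\mathcal{X}$ and to combine it with the Newton-diagram machinery already developed in this section. Since $\mathcal{X}=(-H_y,H_x)$ for the non-negative polynomial $H(x,y):=(f^2+g^2)/2$, the function $H$ is a polynomial first integral of $\mathcal{X}$. Pulling back by the Bendixson inversion $\phi:(u,v)\mapsto(u/(u^2+v^2),v/(u^2+v^2))$, the rational function $H^*:=H\circ\phi$ is a first integral of $b(\mathcal{X})$ on the punctured plane $\mathbb{R}^2\setminus\{\mathbf{0}\}$, because $b(\mathcal{X})$ and the pullback $\phi^*\mathcal{X}$ differ only by the positive scalar factor $(u^2+v^2)^d$ there. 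Consequently every orbit of $b(\mathcal{X})$ lies on a level curve $\{H^*=c\}$.

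I would argue by contradiction. Assume the origin of $b(\mathcal{X})$ admits a parabolic sector $S$. By the standard sectorial decomposition (see e.g.\ Chapter~1 of \cite{Dumortier2006qualitative}), $S$ contains a continuous one-parameter family of orbits $\{\gamma_\alpha\}$, all approaching the origin tangent to a common characteristic direction $\theta_0\in [0,2\pi)$, and the invariant values $c_\alpha:=H^*|_{\gamma_\alpha}$ are pairwise distinct, filling a non-degenerate interval. Let $k:=\deg H$ and let $H_k$ denote its top homogeneous component. A direct expansion in polar coordinates $(u,v)=(r\cos\theta,r\sin\theta)$ gives
\[
H^*(r\cos\theta,r\sin\theta)=r^{-k}\,H_k(\cos\theta,\sin\theta)+O\!\left(r^{-(k-1)}\right)\qquad\text{as }r\to 0^+.
\]
Because $H\geq 0$ on $\mathbb{R}^2$, passing to the limit in $H(\lambda x,\lambda y)/\lambda^k$ as $\lambda\to\infty$ yields $H_k\geq 0$ on the unit circle. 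If $H_k(\cos\theta_0,\sin\theta_0)>0$, then $H^*\to+\infty$ along the ray $\theta=\theta_0$, whereas each orbit $\gamma_\alpha$ tangent to $\theta_0$ satisfies $H^*\equiv c_\alpha<\infty$, a contradiction. Hence every asymptotic direction $\theta_0$ of an orbit in $S$ must be a zero of $H_k$ on the unit circle.

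The main obstacle is to rule out the degenerate case $H_k(\cos\theta_0,\sin\theta_0)=0$, where the leading radial asymptotics are inconclusive. Here I would invoke the Newton-diagram toolkit just built: by Proposition \ref{pr-6} and Lemma \ref{le-4}, the zero directions of $H_k$ on the unit circle correspond bijectively to the bounded edges of $\mathcal{N}(b(\mathcal{X}))$, and each such edge of type $\mathbf{t}$ carries a quasi-homogeneous Hamiltonian $h_{r_{\mathbf{t}}+|\mathbf{t}|}$ that governs the next-order radial-angular asymptotics of $H^*$ along that edge. Performing the quasi-homogeneous blow-up adapted to each edge and repeating the level-curve contradiction with $h_{r_{\mathbf{t}}+|\mathbf{t}|}$ in the role of $H_k$---using Propositions \ref{pr1} and \ref{pr-7} as classification aids for the sectors generated at each blow-up step---should exhaust every admissible tangent direction $\theta_0$ and drive the hypothesis of a parabolic sector to a contradiction at some order. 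The technical delicacy lies in the combinatorial bookkeeping: one must verify, using the explicit vector-coefficient formulas of Lemma \ref{le-4}(c), that the leading angular coefficient of $h_{r_{\mathbf{t}}+|\mathbf{t}|}$ inherited from the non-negative squared structure $f_i^2+g_i^2$ is strictly positive along $\theta_0$ at every iteration, so that the radial contradiction persists all the way down the Newton cascade.
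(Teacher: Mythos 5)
Note first that the paper does not prove Proposition \ref{pr-5} at all: it is imported verbatim from \cite{tian2021necessary}, so there is no internal argument to match your proposal against; your attempt has to stand on its own. Its first half does: $H\circ\phi$ is indeed a first integral of $b\left(\mathcal{X}\right)$ on the punctured plane (the factor $\left(u^2+v^2\right)^d$ in \eqref{eq-5} is positive away from the origin), $H_{2d}\geq 0$ on the unit circle, and the uniform expansion $H^*=r^{-2d}H_{2d}(\cos\theta,\sin\theta)+O\!\left(r^{-(2d-1)}\right)$ correctly rules out any orbit tending to the origin tangentially to a direction where $H_{2d}>0$. (Two small inaccuracies there: orbits of a parabolic sector need not share a common tangent direction — think of a node — and the values $c_\alpha$ need not be pairwise distinct; both are harmless, since the argument works orbit by orbit and one only needs $H^*$ non-constant on the sector, which follows because otherwise $H$ would be constant.)

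The genuine gap is exactly where you locate it, and it is not a bookkeeping issue. The degenerate directions $H_{2d}(\cos\theta_0,\sin\theta_0)=0$ are the common real linear factors of the top terms — precisely the situation this paper is written for (Theorem \ref{th3} already covers the case without such factors, and Examples \ref{ex1}, \ref{ex2} have them with high multiplicity) — and your plan for them is only a plan. Two of its load-bearing claims are unjustified and, as stated, incorrect: (i) the zero directions of $H_{2d}$ do not correspond bijectively to the bounded edges of $\mathcal{N}\left(b\left(\mathcal{X}\right)\right)$; off-axis zero directions show up as real factors of the Hamiltonian of the $(1,1)$-type edge, while the other bounded edges encode quasi-homogeneous behaviour near the coordinate axes (see the diagrams in the proofs of Theorem \ref{th2} and the examples); (ii) the "strict positivity inherited from $f_i^2+g_i^2$ at every iteration" is not automatic, because the edge-associated Hamiltonians are built from all points of an edge and in general contain the cross terms $f_if_j+g_ig_j$ ($i\neq j$), which are not definite — Proposition \ref{pr-6} only controls the vertices, not the full edge polynomials. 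Since Proposition \ref{pr-5} must hold with no hypothesis whatsoever on the Newton diagram, a cascade whose termination hinges on an unverified positivity at each blow-up step does not establish it; completing your route would amount to redoing essentially the whole monodromy analysis, whereas the cited proof in \cite{tian2021necessary} obtains the absence of parabolic sectors directly from the Hamiltonian structure. As it stands, the proposal proves only the non-degenerate case and leaves the essential case open.
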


The properties of the Newton diagram for the vector field $b\left(\mathcal{X}\right)$ are given as follows.
\begin{theorem}\label{th-9}
Let $\mathcal{N}\left(b\left(\mathcal{X}\right)\right)$ be the Newton diagram of system \eqref{eq-6}. The following statements hold.
\begin{itemize}
\item [\emph{(a)}] For each bounded edge, its associated Hamiltonian is non-null.
  \item [\emph{(b)}] The Newton diagram $\mathcal{N}\left(b\left(\mathcal{X}\right)\right)$ has two exterior vertices. Moreover, if $\left(\tilde{a},0\right)$ and $\left(0,\tilde{b}\right)$ are the vector coefficients of the two exterior vertices of $\mathcal{N}\left(b\left(\mathcal{X}\right)\right)$, then $\tilde{a}  \tilde{b}<0$.
  \item [\emph{(c)}]All the inner vertices of $\mathcal{N}\left(b\left(\mathcal{X}\right)\right)$ satisfy $\beta>0$.
  \item [\emph{(d)}] All the vertices of $\mathcal{N}\left(b\left(\mathcal{X}\right)\right)$ have even coordinates.
\end{itemize}
\end{theorem}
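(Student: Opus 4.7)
The plan is to handle the four claims in the order of increasing difficulty: (d), (a), (b), and finally (c). The unifying reduction is Proposition~\ref{pr-6}, which ensures every vertex of $\mathcal{N}(b(\mathcal{X}))$ is one of the $V_{1,2i}$ or $V_{2,2i}$ read off from~\eqref{V12even}. Claim (d) is then immediate, because both coordinate expressions in~\eqref{V12even} carry an explicit factor of~$2$. For claim (a) I would argue by contradiction: if some bounded edge of type~$\mathbf{t}$ satisfied $h_{r_{\mathbf{t}}+|\mathbf{t}|}\equiv 0$, then the edge still being present in the diagram forces the quasi-homogeneous component $\mathscr{X}_{r_{\mathbf{t}}}$ of $b(\mathcal{X})$ to be non-zero, and the conservative-dissipative splitting~\eqref{eq-16} then gives $\mu_{r_{\mathbf{t}}}\not\equiv 0$. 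Proposition~\ref{pr1} would thereby place a node, hence parabolic sectors, at the origin of $b(\mathcal{X})$, contradicting Proposition~\ref{pr-5}.

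For claim (b) I would restrict $b(\mathcal{X})$ directly to the coordinate axes using~\eqref{eq-6}. Setting $v=0$ kills the entire $vP$-component, so the vector coefficient at any support point $(A,0)$ is of the form $(0,b_{A,0})$. After summing contributions over all $(i',j')$ with $i'+j'=l$ (including the factor of $1/2$ on the diagonal), the coefficient collapses into $b_{A,0}=(l/2)\,[f(u,0)^{2}+g(u,0)^{2}]_{u^{l}}$ at $A=4d+2-l$. The exterior vertex on the $u$-axis then corresponds to the largest $l$ for which this coefficient is non-zero, namely $l=2N$ with $N=\max\{\deg f(u,0),\deg g(u,0)\}$, and there $b_{A,0}=N(\alpha_{N}^{2}+\gamma_{N}^{2})>0$, where $\alpha_{N},\gamma_{N}$ denote the $u^{N}$-coefficients of $f$ and $g$. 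The symmetric computation on the $v$-axis inherits an overall minus sign from the $-v^{2}\partial_{v}\mathcal{H}$ factor in $P$, producing a vector coefficient $(\tilde a,0)$ with $\tilde a<0$; hence $\tilde a\tilde b<0$.

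Claim (c) is the step I expect to be the main obstacle. At each inner vertex~$V$ the number $\beta=c_{i_0}\tilde c_{j_0}$ couples two Hamiltonians of generally different types, and the vector coefficient at~$V$ in $b(\mathcal{X})$ absorbs contributions from every $\mathcal{F}_{4d+1-i'-j'}$ with $i'+j'=2i$ whose support contains~$V$, not just the diagonal $\mathcal{F}_{4d+1-2i}$, so Lemma~\ref{le-4}(c) on its own is insufficient. My plan is first to derive the closed-form identity $uQ-vP=i(u^{2}+v^{2})^{2d-2i+1}[f^{2}+g^{2}]_{2i}$ for the quasi-homogeneous type-$(1,1)$ piece of $b(\mathcal{X})$ on the line $u+v=4d+2-2i$, so that the Hamiltonian coefficient at $V_{1,2i}$ unfolds into a binomial-weighted convolution in which the strictly positive diagonal term $f_{i,m_i}^{2}+g_{i,m_i}^{2}$ enters with the leading binomial weight; an analogous identity for the second adjacent edge then lets $\beta$ factor as a product of two strictly positive numbers. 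As a safety net for possibly degenerate edge configurations, I would combine Proposition~\ref{pr-7} with Proposition~\ref{pr-5} to first secure $\beta\ge 0$, and then rely on the explicit computation to exclude $\beta=0$.
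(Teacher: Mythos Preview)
Your treatments of (a), (b), and (d) coincide with the paper's: the same contradictions via Propositions~\ref{pr1} and~\ref{pr-5} for (a), the same axis restrictions of~\eqref{eq-6} for (b), and the same appeal to Proposition~\ref{pr-6} together with~\eqref{V12even} for (d).

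Where you diverge is (c), and there you are working far harder than necessary. The paper's entire argument is: Proposition~\ref{pr-5} forbids parabolic sectors at the origin of $b(\mathcal{X})$, so Proposition~\ref{pr-7} immediately gives $\beta>0$ at every inner vertex. The case $\beta=0$ that you intend to exclude by explicit computation never arises: by definition $\beta=c_{i_0}\tilde c_{j_0}$, where $c_{i_0}$ and $\tilde c_{j_0}$ are the \emph{first nonzero} coefficients of the two adjacent Hamiltonians $h_{r_{\mathbf t}+|\mathbf t|}$ and $h_{r_{\mathbf s}+|\mathbf s|}$, and part~(a) has already shown these Hamiltonians are non-null. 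Hence $\beta\neq 0$ is automatic, and your ``safety net'' is already the complete proof. Your primary plan---the convolution identity $uQ-vP=i(u^2+v^2)^{2d-2i+1}[f^2+g^2]_{2i}$ followed by a positivity analysis of binomial-weighted sums---is correct in principle (the identity does follow from Lemma~\ref{le-4}(a) summed over all $\mathcal{F}_{4d+1-i'-j'}$ with $i'+j'=2i$), but it only controls the type-$(1,1)$ Hamiltonian directly; extending it to the second adjacent edge of arbitrary type $\mathbf s$ and then showing the resulting $\beta$ factors positively would be substantially more delicate than the two-line dynamical argument the paper uses.
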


\begin{proof}
$\rm(a)$ Suppose that there exists an edge whose associated Hamiltonian $h_{r_{\mathbf{t}}+|\mathbf{t}|}\equiv0$. Then, by Proposition \ref{pr1},  the origin of $b\left(\mathcal{X}\right)$ is a node. This contradicts with Proposition \ref{pr-5}.

$\rm(b)$ From Proposition \ref{pr-6}, one has that the Newton diagrams of $\widetilde{\mathscr{Y}}$ and $b\left(\mathcal{X}\right)$ have the same exterior vertices if $\mathcal{N}\left(b\left(\mathcal{X}\right)\right)$ has exterior vertices. For the vector field $\widetilde{\mathscr{Y}}$, we have
\begin{equation}\begin{split}\label{eq10}
v\cdot\dot{u}\big|_{u=0}&=-v^2\sum_{i=1}^dv^{4d-4i}\left(vf_i\left(0,v\right)f_{iy}\left(0,v\right)+vg_i\left(0,v\right)g_{iy}\left(0,v\right)\right)\\
&=-v^2\sum_{i=1}^div^{4d-4i}\left(f_i^2\left(0,v\right)+g_i^2\left(0,v\right)\right)
\end{split}\end{equation}
and
\begin{equation}\begin{split}\label{eq11}
u\cdot\dot{v}\big|_{v=0}&=u^2\sum_{i=1}^du^{4d-4i}\left(uf_i\left(u,0\right)f_{ix}\left(u,0\right)+ug_i\left(u,0\right)g_{ix}\left(u,0\right)\right)\\
&=u^2\sum_{i=1}^diu^{4d-4i}\left(f_i^2\left(u,0\right)+g_i^2\left(u,0\right)\right).
\end{split}\end{equation}
It is obvious that $v\cdot\dot{u}\big|_{u=0}\not\equiv0$ and $u\cdot\dot{v}\big|_{v=0}\not\equiv0$. So $\mathcal{N}\left(b\left(\mathcal{X}\right)\right)$ has two exterior vertices. The coefficients of the lowest term of $v\cdot\dot{u}\big|_{u=0}$ in $v$ and the lowest term of $u\cdot\dot{v}\big|_{v=0}$ in $u$ are
negative and positive, respectively. The statement $\rm(b)$ is confirmed.

$\rm(c)$ By Proposition \ref{pr-5}, we have that the origin of the vector field $b\left(\mathcal{X}\right)$ has no parabolic sectors. From Proposition \ref{pr-7}, it follows that $\beta>0$.

$\rm(d)$ Applying Lemma \ref{le-4}, the Newton diagrams of $\mathcal{F}_{4d+1-2i}$ and $\left(u^2+v^2\right)^{2d+1-2i}\left(f_i^2+g_i^2\right)$ have the same vertices for $i=1,\ldots,d$. Thus, all the vertices of $\mathcal{N}\left(\mathcal{F}_{4d+1-2i}\right)$ have even coordinates, see \eqref{V12even}. From Proposition \ref{pr-6}, the statement $\rm(d)$ holds.

This ends the proof.
\end{proof}

Tian and Zhao \cite{tian2021necessary}  proved the following equivalent results, which reveal a relationship between the global property of a polynomial map $F$ and the local dynamical property of $b\left(\mathcal{X}\right)$.
\begin{theorem}[see \cite{tian2021necessary}]\label{th-3}
	Let $F=\left(f,g\right):\mathbb{R}^2\rightarrow \mathbb{R}^2$ be a polynomial map such that $F\left(0,0\right)=\left(0,0\right)$ and  $\det DF\left(x,y\right)\neq0$ for all  $\left(x,y\right)\in\mathbb{R}^2$. Then the following statements are equivalent.
	\begin{itemize}
		\item [\emph{(a)}] $F$ is a global diffeomorphism of the plane onto itself.
		\item [\emph{(b)}]The origin of the vector field $b\left(\mathcal{X}\right)$ is a monodromic singular point.
	\end{itemize}
\end{theorem}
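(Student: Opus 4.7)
The plan is to reduce the equivalence to a purely dynamical statement using Sabatini's Theorem~\ref{th-1}, and then exploit the interplay between the Hamiltonian structure of $\mathcal{X}$ and the inversion underlying the Bendixson compactification. By Sabatini's theorem, statement (a) is equivalent to the origin being a global center of $\mathcal{X}$, whose Hamiltonian is $H=(f^{2}+g^{2})/2$. Thus it suffices to prove: the origin of $\mathcal{X}$ is a global center if and only if the origin of $b(\mathcal{X})$ is monodromic. The Bendixson compactification is built from the inversion $u=x/(x^{2}+y^{2})$, $v=y/(x^{2}+y^{2})$ followed by multiplication by the positive factor $(u^{2}+v^{2})^{d}$, and this factor is a time reparametrization on $(u,v)\ne(0,0)$, so it does not affect orbits away from the origin of $b(\mathcal{X})$.

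For the forward direction ($\Rightarrow$), suppose the origin of $\mathcal{X}$ is a global center. Every non-origin orbit of $\mathcal{X}$ is then a simple closed curve surrounding the origin of $\mathcal{X}$. A bounded simple closed curve in $(x,y)$-coordinates whose interior contains the origin of $\mathcal{X}$ maps under the inversion to a bounded simple closed curve in $(u,v)$-coordinates whose interior contains the origin of $b(\mathcal{X})$, since the origin of $b(\mathcal{X})$ is the image of the point at infinity of $(x,y)$, which lies in the exterior of the original curve. As the orbits of $\mathcal{X}$ sweep out $\mathbb{R}^{2}\setminus\{(0,0)\}$, their images fill a punctured neighborhood of the origin of $b(\mathcal{X})$ with closed orbits, making it monodromic.

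For the reverse direction ($\Leftarrow$), suppose the origin of $b(\mathcal{X})$ is monodromic. On a suitable transversal $\ell$ to the flow through the origin of $b(\mathcal{X})$, the pulled-back Hamiltonian $\widetilde H(u,v)=H(u/(u^{2}+v^{2}),v/(u^{2}+v^{2}))$ is preserved along orbits of $b(\mathcal{X})$ and has a pole of order $2d$ at the origin of $b(\mathcal{X})$, so it is strictly monotone along $\ell$ near the origin. Consequently the Poincar\'{e} first-return map $P\colon\ell\to\ell$ must be the identity, and every orbit of $b(\mathcal{X})$ in a punctured neighborhood of its origin is a simple closed curve surrounding it. Pulling back through the inversion, there is $R>0$ such that every orbit of $\mathcal{X}$ meeting $\{x^{2}+y^{2}>R^{2}\}$ is a simple closed curve contained in that region, and $H\to+\infty$ along such orbits as they recede to infinity; in particular $H$ is coercive, hence proper. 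A direct computation gives $\nabla H=(DF)^{\top}F$, so critical points of $H$ coincide with zeros of $F$; at each such zero the Hessian equals $(DF)^{\top}DF$, which is positive definite because $\det DF\ne 0$. Thus every critical point of $H$ is a non-degenerate local minimum with value $0$. A proper smooth function on $\mathbb{R}^{2}$ whose critical set consists only of non-degenerate local minima has exactly one critical point, by a mountain-pass argument (two distinct minima would force a saddle of strictly positive value). Hence $F^{-1}(0,0)=\{(0,0)\}$, the level sets $\{H=c\}$ for $c>0$ are compact, smooth, simple closed curves foliating $\mathbb{R}^{2}\setminus\{(0,0)\}$, and they coincide with the orbits of $\mathcal{X}$, so the origin is a global center and Sabatini's theorem gives (a).

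The main obstacle is the reverse direction. Two subtle points arise there: first, selecting the transversal $\ell$ and verifying that $\widetilde H|_{\ell}$ is strictly monotone requires controlling the leading behaviour of the pulled-back Hamiltonian near the origin of $b(\mathcal{X})$, where its denominator vanishes; second, passing from a family of closed orbits in the ``far field'' $\{x^{2}+y^{2}>R^{2}\}$ to a global foliation of $\mathbb{R}^{2}\setminus\{(0,0)\}$ relies on combining Hamiltonian conservation with the positive definiteness of $(DF)^{\top}DF$ at zeros of $F$ in order to exclude any extra zeros of $F$ and hence any other singular point of $\mathcal{X}$.
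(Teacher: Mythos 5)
The paper itself does not prove Theorem~\ref{th-3}: it is quoted from \cite{tian2021necessary}, so there is no internal proof to compare with, and your proposal must stand on its own. Your reduction via Sabatini's Theorem~\ref{th-1} and the orbit correspondence under the inversion is the natural route, and the direction (a)$\Rightarrow$(b) is essentially fine: once $F$ is a global diffeomorphism, $H=(f^2+g^2)/2$ is proper, its level curves are the closed orbits of the global center, and their images are closed orbits of $b(\mathcal{X})$ shrinking to its origin, which gives monodromy.

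The direction (b)$\Rightarrow$(a), however, has a genuine gap, and it sits exactly where the whole content of the theorem lies. You assert that $\widetilde H=H\circ\iota$ ``has a pole of order $2d$'' at the origin of $b(\mathcal{X})$, and later that $H\to+\infty$ along the far-field closed orbits, ``in particular $H$ is coercive, hence proper''. Neither claim is justified, and neither can be assumed: $\widetilde H(u,v)\to\infty$ as $(u,v)\to(0,0)$ is equivalent to properness of $H$, i.e.\ of $F$, and a proper local diffeomorphism of the plane is already a global diffeomorphism by Hadamard's theorem, so this is essentially the conclusion you are trying to reach; moreover it genuinely fails for maps with $\det DF\neq0$ such as Pinchuk's \cite{MR1292168}, so it must be extracted from the monodromy hypothesis, which you never use for that purpose. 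Concretely, two repairs are needed. First, monotonicity of $\widetilde H$ on a transversal does not follow from a ``pole of order $2d$'' (the blow-up is direction-dependent and may fail along lines where the top homogeneous part of $H$ vanishes); instead one can note that along an analytic arc through the origin $\widetilde H$ is a quotient of analytic functions of the arc parameter, hence either constant (impossible, since then $H$ would be constant on an open set) or eventually strictly monotone, which already forces the return map to be the identity. Second, after you know the nearby orbits of $b(\mathcal{X})$ are closed and the corresponding orbits of $\mathcal{X}$ are closed level curves with monotone values $c_n$, you must still rule out $c_n\to L<\infty$: otherwise $H$ would be bounded outside a disk (by the closed-orbit foliation) and hence on all of $\mathbb{R}^2$, so $H$ would be a constant polynomial, contradicting $\det DF\neq0$; only then is $H$ coercive and your Hessian/mountain-pass step closes the argument. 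Alternatively, one can bypass properness entirely: every zero of $F$ is a nondegenerate minimum of $H$, hence a singular point of $\mathcal{X}$ of index $+1$, while the index of $\mathcal{X}$ along each far-field periodic orbit equals $1$, so by Poincar\'e--Hopf the origin is the only zero of $F$ inside it and all interior level components are periodic orbits surrounding it; letting these orbits recede to infinity yields the global center and then Theorem~\ref{th-1} gives (a).
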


\begin{proof}[\bf Proof of Theorem \ref{th-11}]
Using Theorems \ref{th-12}  and \ref{th-9}, we have that the origin of $b\left(\mathcal{X}\right)$ is a monodromic singular point. So this theorem follows immediately from Theorem \ref{th-3}.
\end{proof}

\begin{proof}[\bf Proof of Theorem \ref{th2}]
Let $f\left(x,y\right)=\sum\nolimits_{i=1}^{n}f_i\left(x,y\right)$, $g\left(x,y\right)=\sum\nolimits_{j=1}^{m}g_j\left(x,y\right)$ and $d=\max\{n,m\}$, where $f_i\left(x,y\right)$ and $g_j\left(x,y\right)$ are homogeneous polynomials of degree $i$ and $j$, respectively. The Hamiltonian of the vector field \eqref{eq-1} is
\begin{align}\label{eq8}
&H\left(x,y\right)=\frac{f^2\left(x,y\right)+g^2\left(x,y\right)}{2}=\frac{1}{2}\sum_{l=2}^{2d}H_l\left(x,y\right),
\end{align}
where $H_l=f_if_j+g_ig_j$ is a homogeneous polynomial of degree $l$ and $i+j=l$ for $i,j=1,\ldots,d$. Obviously, $\deg H=2d$ and $H_{2d,x}^2+H_{2d,y}^2\not\equiv0$.  Our proof  can be divided into three cases.

 {\bf Case 1: $\deg H_x=\deg H_y$.} For this case, the higher homogeneous terms of the polynomials $ff_x+gg_x$ and $ff_y+gg_y$ are
 $$\frac{1}{2}H_{2d,x}\left(x,y\right)=f_d\left(x,y\right)f_{d,x}\left(x,y\right)+g_d\left(x,y\right)g_{d,x}\left(x,y\right)\not\equiv0$$
 and
 $$\frac{1}{2}H_{2d,y}\left(x,y\right)=f_d\left(x,y\right)f_{d,y}\left(x,y\right)+g_d\left(x,y\right)g_{d,y}\left(x,y\right)\not\equiv0,$$
 respectively. Note that $H_{2d}\left(x,0\right)=f_d^2\left(x,0\right)+g_d^2\left(x,0\right)\not\equiv0$ and $H_{2d}\left(0,y\right)=f_d^2\left(0,y\right)+g_d^2\left(0,y\right)\not\equiv0$. If $H_{2d}\left(x,0\right)\equiv0$, then $y|f_d\left(x,y\right)$ and $y|g_d\left(x,y\right)$. So, $y|H_{2d,x}\left(x,y\right)$ and $y|H_{2d,y}\left(x,y\right)$.  This contradicts with the fact that $H_{2d,x}\left(x,y\right)$ and $H_{2d,y}\left(x,y\right)$ do not have real linear factors in common.  Analogously, $H_{2d}\left(0,y\right)\not\equiv0$.
Thereby, $f_d^2\left(x,0\right)+g_d^2\left(x,0\right)=ax^{2d}$ and
 $f_d^2\left(0,y\right)+g_d^2\left(0,y\right)=by^{2d}$ with $a>0,b>0$.

 By  Proposition \ref{pr-6}, the Newton diagrams of
 \begin{align*}
 	&\widetilde{\mathscr{Y}}=\frac{1}{2}\sum_{i=1}^d\mathcal{F}_{4d+1-2i}
 \end{align*}
  and $b\left(\mathcal{X}\right)$ have the same vertices, where
  \begin{small}
  	\begin{align*}
  		&\mathcal{F}_{4d+1-2i}=\left(
  		\begin{array}{l}
  			\left(u^2+v^2\right)^{2d-2i}\left[\left(u^2-v^2\right)\partial_v\left(f_i^2+g_i^2\right)
  			-2uv\partial_u\left(f_i^2+g_i^2\right)\right]\\
  			\left(u^2+v^2\right)^{2d-2i}\left[\left(u^2-v^2\right)\partial_u\left(f_i^2+g_i^2\right)
  			+2uv\partial_v\left(f_i^2+g_i^2\right)\right]\\
  		\end{array}
  		\right)^T
  	\end{align*}
  \end{small}for $i=1,\ldots,d$. For the vector field $\widetilde{\mathscr{Y}}$, we have $$v\cdot\dot{u}\big|_{u=0}=-bdv^{2d+2}+o\left(v^{2d+2}\right)\quad\text{and}\quad u\cdot\dot{v}\big|_{v=0}=adu^{2d+2}+o\left(u^{2d+2}\right)$$ (see the equations \eqref{eq10} and \eqref{eq11}). Thus, the exterior vertices  of $\mathcal{N}\left(b\left(\mathcal{X}\right)\right)$ are $V_0=\left(0,2d+2\right)$ and $V_1=\left(2d+2,0\right)$.  It follows from Lemma \ref{le-4} that the Newton diagrams of $\mathcal{F}_{4d+1-2i}$ and $\left(u^2+v^2\right)^{2d+1-2i}\left(f_i^2+g_i^2\right)$ have the same vertices for $i=1,\ldots,d$. Therefore, by the statement $\rm (b)$ of Lemma \ref{le-2},  the vertices of the Newton diagram of $\mathcal{F}_{4d+1-2i}$ lie on the straight line $l_{4d+1-2i}: x+y=4d+2-2i$ for $i=1,\ldots,d$. So, the configuration of the support for the vector field $b\left(\mathcal{X}\right)$ is  described in (i) of Figure \ref{fig1}.   The Newton diagram of
  $b\left(\mathcal{X}\right)$ consists of two exterior vertices $V_0=\left(0,2d+2\right)$ and $V_1=\left(2d+2,0\right)$, and a unique bounded edge of type $\left(1,1\right)$, see (ii) of Figure \ref{fig1}.
  \begin{figure}[H]
  		\centering
  		\begin{minipage}{0.35\linewidth}
  			\centering
  			\psfrag{0}{$V_0$}
  			\psfrag{1}{$V_1$}
  			\psfrag{2}{$l_{2d+1}$}
  			\psfrag{3}{$l_{4d-1}$}
  			\centerline{\includegraphics[width=1\textwidth]{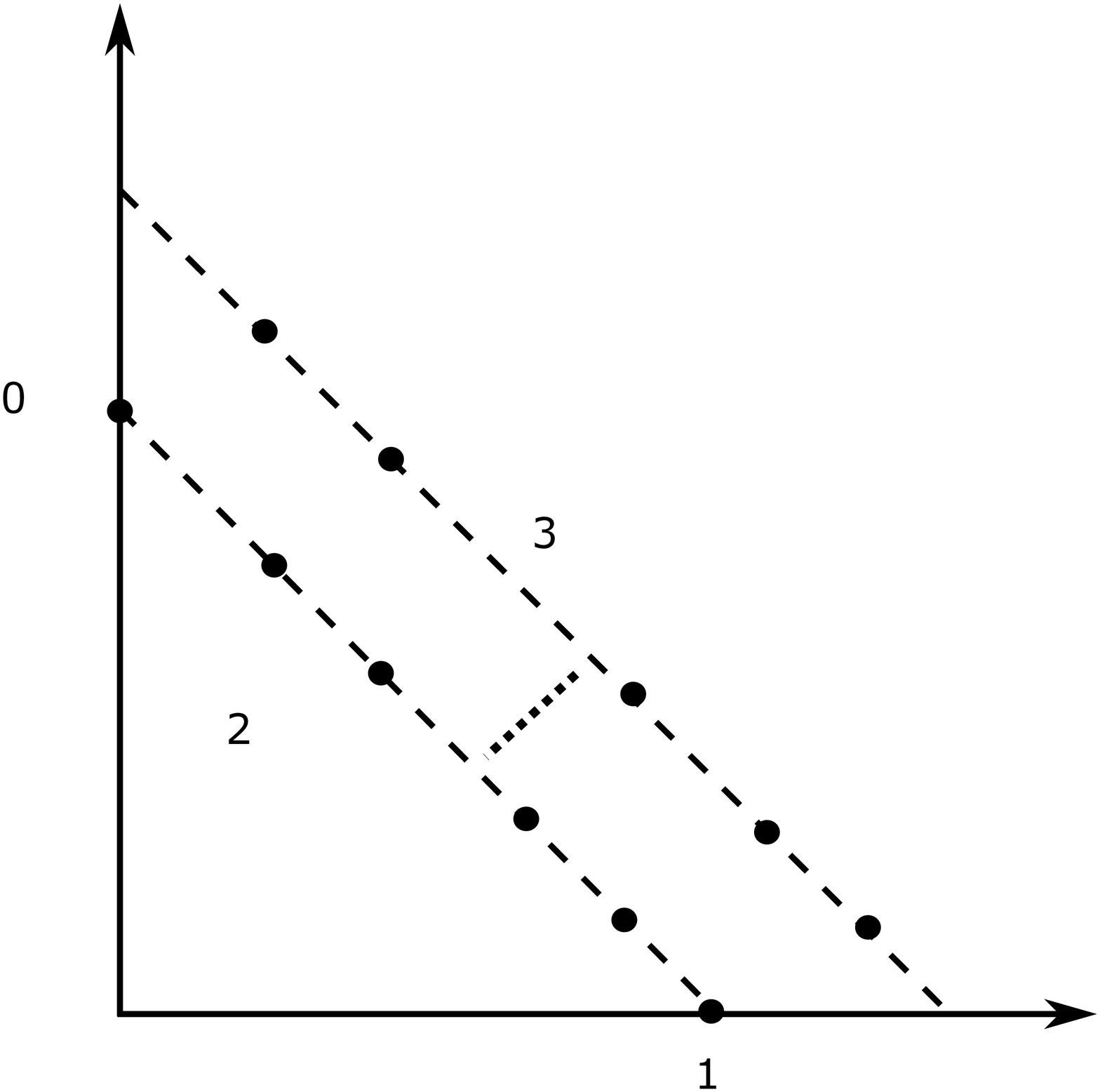}}
  			\vspace{1ex}
  			\centerline{(i)\;Configuration of the support of $b\left(\mathcal{X}\right)$.}
  		\end{minipage}
  		\begin{minipage}{0.15\linewidth}
  			\centering
  			\centerline{\includegraphics[width=0.4\textwidth]{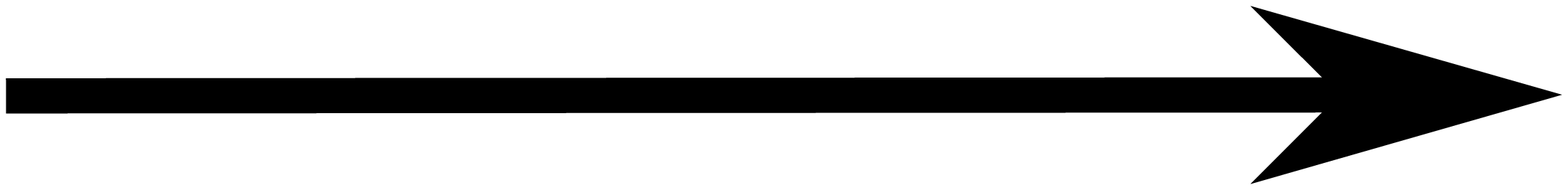}}
  		\end{minipage}
  		\begin{minipage}{0.35\linewidth}
  			\centering
  			\psfrag{0}{$V_0$}
  		\psfrag{1}{$V_1$}
  		\psfrag{2}{$h_{2\left(d+1\right)}^{\left(1,1\right)}=\frac{d}{2\left(d+1\right)}\left(u^2+v^2\right)\left(f_d^2+g_d^2\right)$}
  			\centerline{\includegraphics[width=1\textwidth]{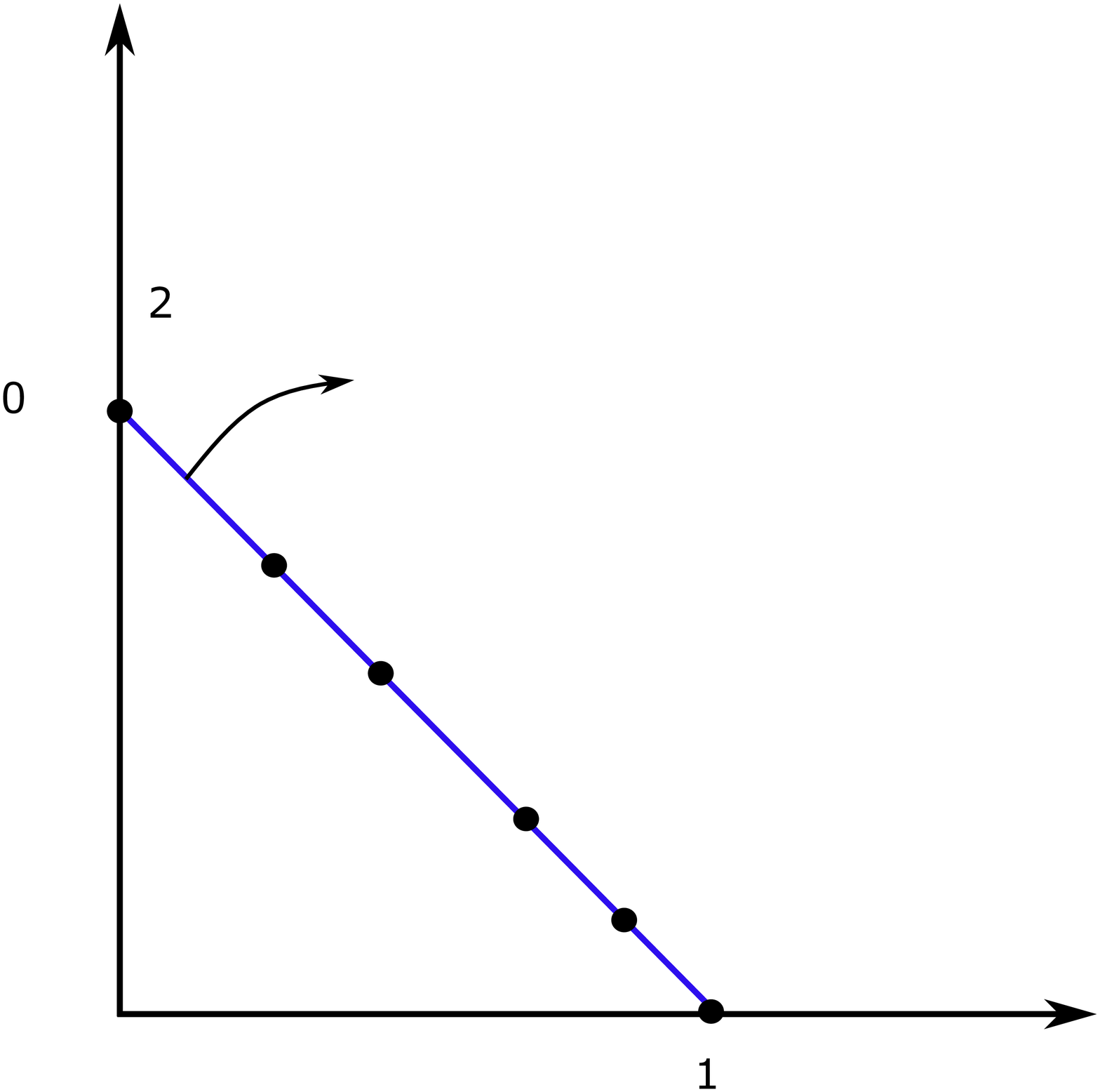}}
  			\vspace{2ex}
  			\centerline{(ii)\;Newton diagram.}
  		\end{minipage}
  		\caption{Configuration of the support of $b\left(\mathcal{X}\right)$ and Newton diagram of $ b\left(\mathcal{X}\right)$.}\label{fig1}
  \end{figure}

  For the vector field $ b\left(\mathcal{X}\right)$, the lowest-degree homogeneous terms of type $\left(1,1\right)$  are
  \begin{align*}
  	&\frac{1}{2}\mathcal{F}_{2d+1}=\frac{1}{2}\left(
  	\begin{array}{l}
  		\left(u^2-v^2\right)\partial_v\left(f_d^2+g_d^2\right)
  		-2uv\partial_u\left(f_d^2+g_d^2\right)\vspace{1ex}\\
  		\left(u^2-v^2\right)\partial_u\left(f_d^2+g_d^2\right)
  		+2uv\partial_v\left(f_d^2+g_d^2\right)\\
  	\end{array}
  	\right)^T.
  \end{align*}
Thus the Hamiltonian associated to the  edge of type $\left(1, 1\right)$ is
 $$h_{2\left(d+1\right)}^{\left(1,1\right)}=\frac{d}{2\left(d+1\right)}\left(u^2+v^2\right)\left(f_d^2\left(u,v\right)+g_d^2\left(u,v\right)\right).$$
  Assume that $h_{2\left(d+1\right)}^{\left(1,1\right)}$ has a factor $v-\lambda u$ with $\lambda\in\mathbb{R}\setminus\{0\}$. Namely, $f_d^2\left(u,v\right)+g_d^2\left(u,v\right)$
  has a factor $v-\lambda u$ with $\lambda\in\mathbb{R}\setminus\{0\}$. Therefore, $v-\lambda u| f_d\left(u,v\right)$  and $v-\lambda u| g_d\left(u,v\right)$. This means that
  $H_{2d,x}$ and $H_{2d,y}$  have the factor  $y-\lambda x$ in common, which contradicts with the hypothesis of the theorem.  Thus, the Hamiltonian  $h_{2\left(d+1\right)}^{\left(1,1\right)}$ has no factor of the form $v-\lambda u$ with $\lambda\in\mathbb{R}\setminus\{0\}$.

 {\bf Case 2: $\deg H_x>\deg H_y$.}  In this case, one has that $\deg H_x=2d-1$ and $\deg H_y=\bar{d}_1-1$ with $2d>\bar{d}_1>1$. Then $H_{2d,x}\not\equiv0$ and $H_{2d,y}\equiv0$.
 It is clear that the Hamiltonian $H\left(x,y\right)$ has the form
 \begin{align}\label{eq12}
 &H\left(x,y\right)=\frac{1}{2}\left(H_{2d}\left(x\right)+H_{2d-1}\left(x\right)+\cdots+H_{\bar{d}_1+1}\left(x\right)+H_{\bar{d}_1}\left(x,y\right)+\cdots+H_2\left(x,y\right)\right),
 \end{align}
 where $H_{j}\left(x\right)=a_jx^j$ for $j=\bar{d}_1+1,\ldots,2d$, with $a_{2d}>0$, $H_{\bar{d}_1,y}\left(x,y\right)\not\equiv0$ and  $H_{i}\left(x,y\right)$  is a homogeneous polynomial of degree $i$ for $i=2,\ldots,\bar{d}_1$. The higher homogeneous terms of the polynomials $ff_x+gg_x$ and $ff_y+gg_y$ are $da_{2d}x^{2d-1}$ and $H_{\bar{d}_1,y}\left(x,y\right)/2$, respectively. Since $x^{2d-1}$ and $H_{\bar{d}_1,y}\left(x,y\right)$ do not have real linear factors in common, we must have
 \begin{align}\label{eq13}
 &H_{\bar{d}_1}\left(x,y\right)=by^{\bar{d}_1}+\sum_{i+j=\bar{d}_1,\;i\geq1}b_{ij}x^iy^j
 \end{align}
 with $b\neq0$. From the equations \eqref{eq8}, \eqref{eq12} and \eqref{eq13}, one has
 \begin{align*}
 \frac{f^2(0,y)+g^2(0,y)}{2}=H(0,y)=\frac{1}{2}\left(by^{\bar{d}_1}+\cdots+H_2(0,y)\right).
 \end{align*}
 Thus, $\bar{d}_1$ is even and $b>0$, and the equation \eqref{eq13} can be rewritten as
  \begin{align}\label{eq15}	&H_{\bar{d}_1}\left(x,y\right)=H_{2d_1}\left(x,y\right)=by^{2d_1}+\sum_{i+j=2d_1,\;i\geq1}b_{ij}x^iy^j
 \end{align}
with $b>0$.

By the equation \eqref{eq-5}, the expression of $b\left(\mathcal{X}\right)$ is given by
\begin{equation}\label{eq18}
	\begin{split}
		&\dot{u}=\frac{1}{2}\sum_{i=2}^{2d}\left(u^2+v^2\right)^{2d-i}\left(\left(u^2-v^2\right)H_{i,v}\left(u,v\right)-2uvH_{i,u}\left(u,v\right)\right),\\
		&\dot{v}=\frac{1}{2}\sum_{i=2}^{2d}\left(u^2+v^2\right)^{2d-i}\left(\left(u^2-v^2\right)H_{i,u}\left(u,v\right)+2uvH_{i,v}\left(u,v\right)\right),
	\end{split}
\end{equation}
where $H_i,\ i=2,3,\ldots,2d$ are given in \eqref{eq12}.
Let
\begin{align}\label{eq19}
	&\mathcal{F}_{4d+1-i}=\left(
	\begin{array}{l}
	\left(u^2+v^2\right)^{2d-i}\left(\left(u^2-v^2\right)H_{i,v}\left(u,v\right)-2uvH_{i,u}\left(u,v\right)\right)\\
		\left(u^2+v^2\right)^{2d-i}\left(\left(u^2-v^2\right)H_{i,u}\left(u,v\right)+2uvH_{i,v}\left(u,v\right)\right)\\
	\end{array}
	\right)^T\in\mathcal{Q}_{4d+1-i}^{\left(1,1\right)}
\end{align}
for $i=2,3, \ldots,2d$.  Then,
$$\text{supp}\left(b\left(\mathcal{X}\right)\right)=\bigcup_{i=2}^{2d}\text{supp}\left(\mathcal{F}_{4d+1-i}\right).$$
By the statement $\rm (a)$ of Lemma \ref{le-2}, $\text{supp}\left(\mathcal{F}_{4d+1-i}\right)$ lie on the straight line $l_{4d+1-i}: x+y=4d+2-i$ for $i=2,3,\ldots,2d$.  From Lemma \ref{le-4}, the Newton diagrams of $\mathcal{F}_{4d+1-i}$ and $\left(u^2+v^2\right)^{2d+1-i}H_i\left(u,v\right)$ have the same vertices for $i=2,3,\ldots,2d$.

Furthermore, we have $\left(u^2+v^2\right)^{2d+1-i}H_i\left(u,v\right)=\left(u^2+v^2\right)^{2d+1-i}H_i\left(u\right)
=a_iu^i\left(u^2+v^2\right)^{2d+1-i}$ for $i=2d_1+1,\ldots,2d$. Consequently, the vertices of the Newton diagram of $\mathcal{F}_{4d+1-i}$ are
$$V_{1,i}=\left(2\left(2d+1\right)-i,0\right)\quad\text{and}\quad V_{2,i}=\left(i,2\left(2d+1-i\right)\right)$$
for $i=2d_1+1,\ldots,2d$.  Note  that $V_{2,i}$ lies on the straight line $l_0:2x+y=2\left(2d+1\right)$  for $i=2d_1+1,\ldots,2d$.

For $i=2d_1$, the Newton diagram of $\mathcal{F}_{4d+1-2d_1}$ has an exterior vertex
 $$V_{2,2d_1}=\left(0,2\left(2d+1-d_1\right)\right)$$
 by the equation \eqref{eq15}. Based on the above analysis, we get the configuration of the support  for the vector field $b\left(\mathcal{X}\right)$, see (i) of Figure \ref{fig2}.
Hence, the Newton diagram of  $ b\left(\mathcal{X}\right)$ has two exterior vertices $V_{1,2d}=\left(2\left(d+1\right),0\right)$ and $V_{2,2d_1}=\left(0,2\left(2d+1-d_1\right)\right)$, and an inner vertex $V_{2,2d}=\left(2d,2\right)$, and two edges of type $\left(2d-d_1,d\right)$ and $\left(1, 1\right)$,  as shown in (ii) of Figure \ref{fig2}.  The vector
 fields associated to the vertices $V_{1,2d}$, $V_{2,2d}$ and $V_{2,2d_1}$  are   $\left(0,a_{2d}du^{2d+1}\right)$ , $\left(-2a_{2d}du^{2d}v,-a_{2d}du^{2d-1}v^2\right)$ and $\left(-bd_1v^{2\left(2d-d_1\right)+1},0\right)$, respectively.
 \begin{figure}[H]
 	{\tiny
 	\hspace{-20pt}\centering
 	\begin{minipage}{0.4\linewidth}
 		\centering
 		\psfrag{0}{$l_0$}
 		\psfrag{1}{$2\left(2d+1\right)$}
 		\psfrag{2}{$2d+1$}
 		\psfrag{3}{$V_{1,2d}$}
 		\psfrag{4}{$V_{2,2d}$}
 		\psfrag{5}{$V_{2,2d_1+1}$}
 		\psfrag{6}{$V_{1,2d_1+1}$}
 		\psfrag{7}{$V_{2,2d_1}$}
 		\psfrag{8}{$l_{2d+1}$}
 		\psfrag{9}{$l_{4d-2d_1}$}
 		\psfrag{10}{$l_{4d-2d_1+1}$}
 		\psfrag{11}{$l_{4d-1}$}
 		\centerline{\includegraphics[width=1\textwidth]{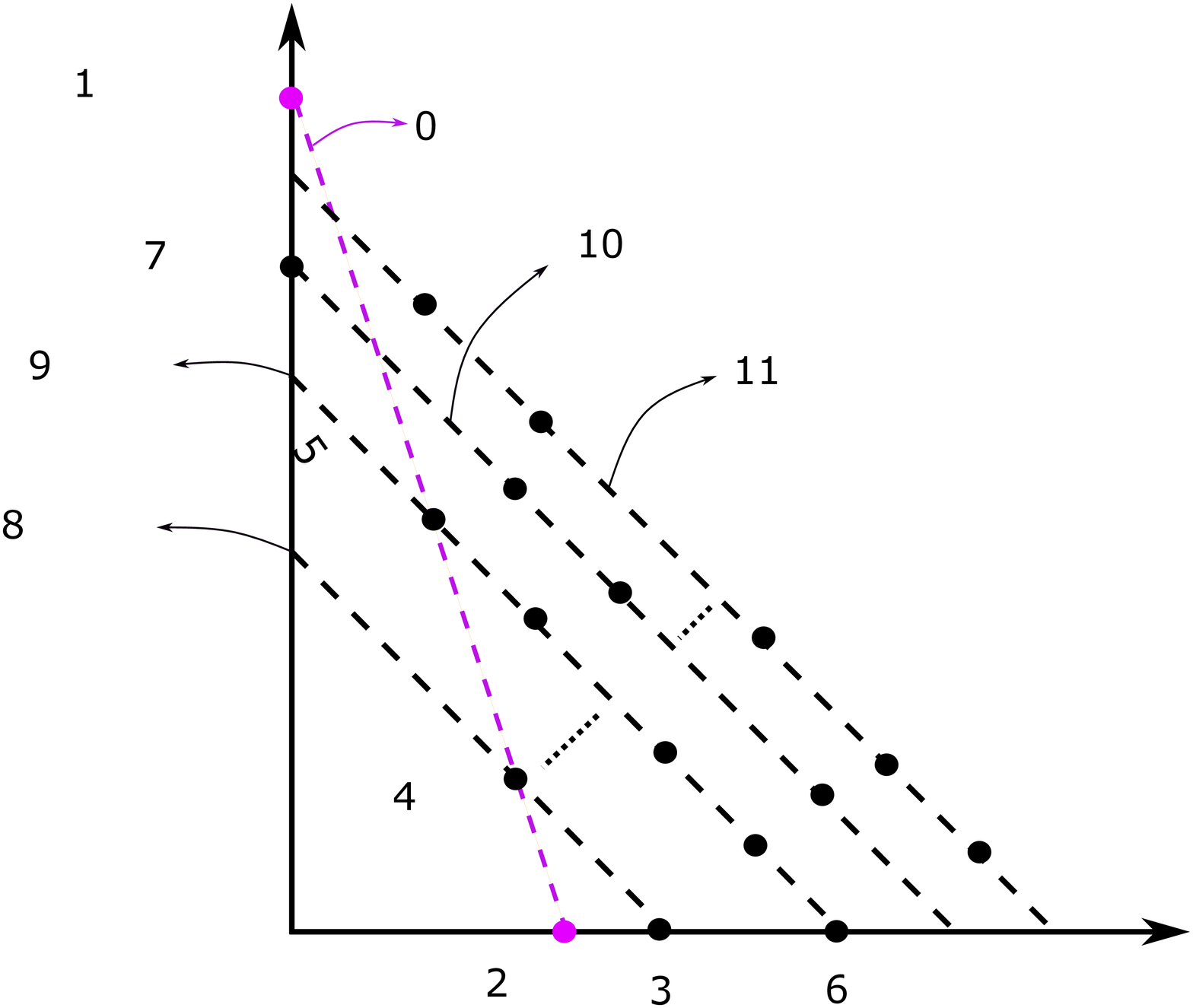}}
 		\vspace{1ex}
 		\centerline{(i)\;Configuration of the support of $ b\left(\mathcal{X}\right)$.}
 			\end{minipage}
 \hspace{-20pt}
 		\begin{minipage}{0.15\linewidth}
 			\centering
 			\centerline{\includegraphics[width=0.4\textwidth]{row.eps}}
 		\end{minipage}
 \hspace{-20pt}
 	\begin{minipage}{0.35\linewidth}
 	\centering
 	\psfrag{3}{$V_{1,2d}$}
 	\psfrag{4}{$V_{2,2d}$}
 	\psfrag{7}{$V_{2,2d_1}$}
 	\psfrag{10}{$h_{2d\left(2d-d_1+1\right)}^{\left(2d-d_1,d\right)}\!=\!\frac{d_1}{2\left(2d-d_1+1\right)}\left(a_{2d}u^{2d}\!+\!bv^{2\left(2d-d_1\right)}\right)v^2$}
 	\psfrag{11}{$h_{2\left(d+1\right)}^{\left(1,1\right)}=\frac{a_{2d}}{2\left(d+1\right)}\left(u^2+v^2\right)u^{2d}$}
 	\centerline{\includegraphics[width=1\textwidth]{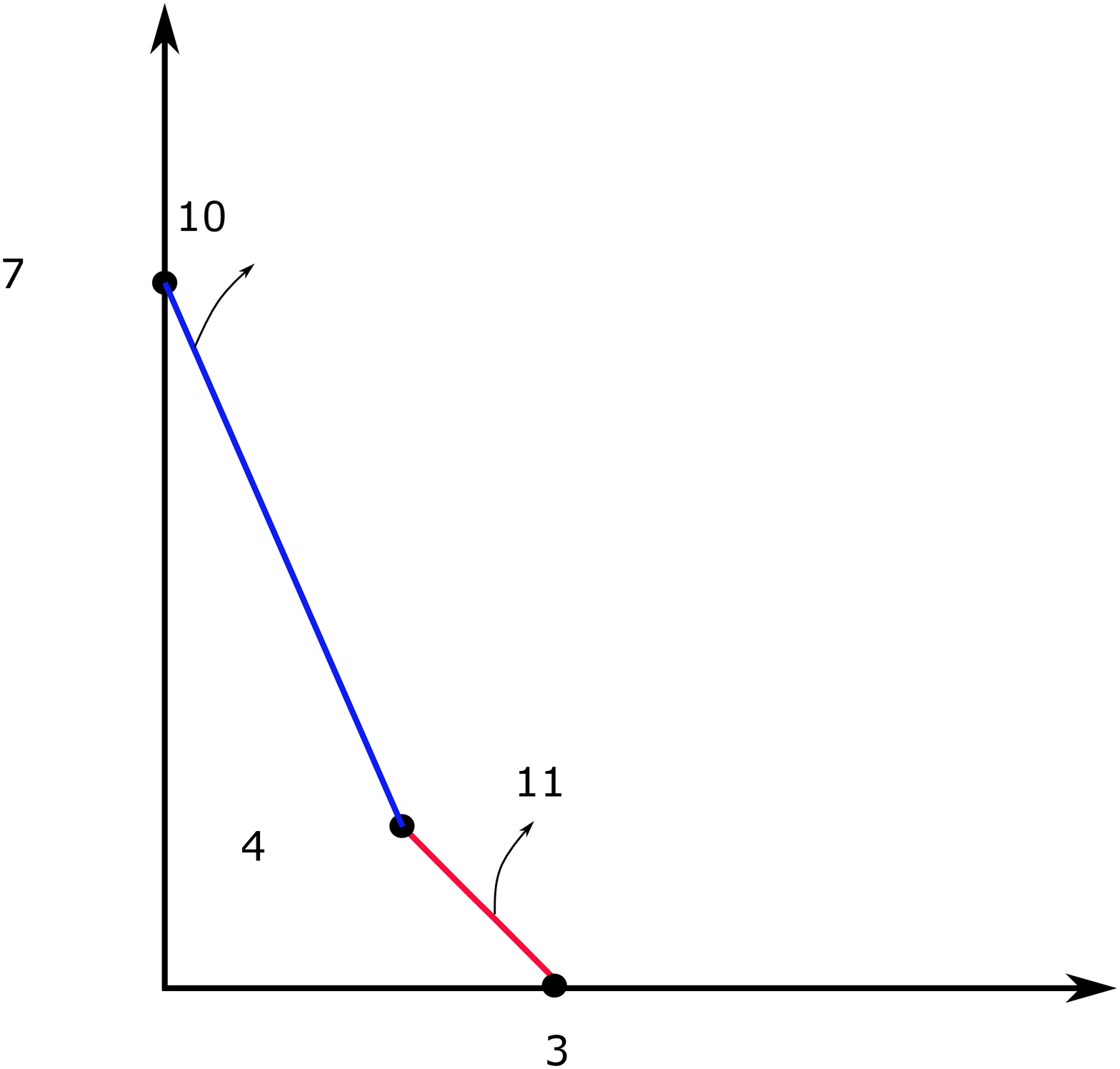}}
 	\vspace{2ex}
 	\centerline{(ii)\;Newton diagram.}
 		\end{minipage}
 	\caption{Configuration of the support of $ b\left(\mathcal{X}\right)$ and Newton diagram for $ b\left(\mathcal{X}\right)$.}\label{fig2}}
 \end{figure}

For the vector field $ b\left(\mathcal{X}\right)$, the lowest-degree homogeneous terms of type $\left(1,1\right)$  are
	\begin{align*}
		&\frac{1}{2}\mathcal{F}_{2d+1}=\frac{1}{2}\left(
		\begin{array}{l}
			\left(u^2-v^2\right)H_{2d,v}\left(u,v\right)
			-2uvH_{2d,u}\left(u,v\right)\vspace{1ex}\\
			\left(u^2-v^2\right)H_{2d,u}\left(u,v\right)
			+2uvH_{2d,v}\left(u,v\right)\\
		\end{array}
		\right)^T=
		\left(
		\begin{array}{l}
			-2a_{2d}du^{2d}v\vspace{1ex}\\
			a_{2d}d\left(u^2-v^2\right)u^{2d-1}
		\end{array}
	\right)^T.
	\end{align*}
Then, the Hamiltonian associated to the edge of type $\left(1, 1\right)$ is
$$h_{2\left(d+1\right)}^{\left(1,1\right)}=\frac{a_{2d}d}{2\left(d+1\right)}\left(u^2+v^2\right)u^{2d}.$$

To obtain the lowest-degree quasi-homogeneous terms of type $\left(2d-d_1,d\right)$, we must determine that whether
there exists some other vertex $V_{2,i}$ which belongs to the edge $\overline{V_{2,2d_1}V_{2,2d}}$.
We again remark that the vertex  $V_{2,i}$  lies on the straight line $l_0:2x+y=2\left(2d+1\right)$ for $i=2d_1+1,\ldots,2d$, see (i) of Figure \ref{fig2}.  This implies that
$$\overline{V_{2,2d_1}V_{2,2d}}\;\cap\;\text{supp}\left(b\left(\mathcal{X}\right)\right)=\left\{V_{2,2d_1},V_{2,2d}\right\}.$$
Thus, the lowest-degree quasi-homogeneous terms of type $\left(2d-d_1,d\right)$ for the vector field
$b\left(\mathcal{X}\right)$ are
	\begin{align*}
	&\left(
	\begin{array}{l}
		-bd_1v^{2\left(2d-d_1\right)+1}-2a_{2d}du^{2d}v\vspace{1ex}\\
		-a_{2d}du^{2d-1}v^2
	\end{array}
	\right)^T.
\end{align*}
The Hamiltonian associated to the  edge of type $\left(2d-d_1,d\right)$ is
$$h_{2d\left(2d-d_1+1\right)}^{\left(2d-d_1,d\right)}=\frac{d_1}{2\left(2d-d_1+1\right)}\left(a_{2d}\left(u^{d}\right)^2+b\left(v^{2d-d_1}\right)^2\right)v^2$$
with $a_{2d}>0$ and $b>0$. It follows that the Hamiltonians  $h_{2d\left(2d-d_1+1\right)}^{\left(2d-d_1,d\right)}$ and $h_{2\left(d+1\right)}^{\left(1,1\right)}$ have no factor of the form $v^{t_1}-\lambda u^{t_2}$ with $\lambda\in\mathbb{R}\setminus\{0\}$.

 {\bf Case 3: $\deg H_y>\deg H_x$.} The proof of this case can be done in the same way as {\bf Case 2} by interchanging the variables $x$ and $y$.

 We complete the proof of Theorem \ref{th2}.
\end{proof}

\section{Proof of Examples \ref{ex1} and \ref{ex2}}\label{se-4}

In this section, we solve Examples \ref{ex1} and \ref{ex2} for the
application of Theorem \ref{th-11}.

\begin{proof}[\bf Proof of Example \ref{ex1}]
Let $\bar{g}\left(x\right)=\sum_{i=1}^mb_ix^{2i}$.  Then, the Hamiltonian of \eqref{eq-1} is given by
\begin{align*}
		&H\left(x,y\right)=\frac{f^2\left(x\right)+\left(y+\bar{g}\left(x\right)\right)^2}{2}=\frac{1}{2}\left(f^2\left(x\right)+\bar{g}^2\left(x\right)+2y\bar{g}\left(x\right)+y^2\right),
	\end{align*}
where $f\left(x\right)=\sum_{i=0}^na_ix^{2i+1}$. Since $2\left(2n+1\right)=\max\left\{\deg f^2, \deg\bar{g}^2\right\}>\deg \left(y\bar{g}\right)=2m+1>2$,  the Hamiltonian $H\left(x,y\right)$ can be written in the form
\begin{equation}\begin{split}\label{eq21}
H\left(x,y\right)=&\frac{1}{2}\sum_{i=2}^{2\left(2n+1\right)}H_i(x,y)\\ =&\frac{1}{2}\left(H_{2\left(2n+1\right)}\left(x\right)\!+\!H_{4n}\left(x\right)\!+\!\cdots\!+\!H_{2\left(m+2\right)}
\left(x\right)\!+\!H_{2(m+1)}\left(x\right)\!+\!H_{2m+1}\left(x,y\right)\!+\!
\!\cdots\!+\!H_2\left(x,y\right)\right),
\end{split}\end{equation}	
where $H_i(x,y)$ is a homogeneous polynomial of degree $i$, and $H_i(x,y)=H_i(x)=c_ix^i$ for $i=2\left(m+1\right),2\left(m+2\right),\ldots,4n,2\left(2n+1\right)$, with $c_{2\left(2n+1\right)}=a_n^2>0$,  and for  $i=2,3,\ldots,2m+1$,
\begin{equation}\label{eq23}
H_i\left(x,y\right)=
\begin{cases}
(a_0^2+b_1^2)x^2+y^2,\;\text{if}\;i=2,\\
c_ix^i,\;\text{if}\;i\;\text{is even with}\;i\geq4,\\
2b_{(i-1)/2}x^{i-1}y,\;\text{if}\;i\;\text{is odd}.
\end{cases}
\end{equation}

Using the equation \eqref{eq-5}, the compactified vector field $b\left(\mathcal{X}\right)$ is
\begin{equation}\label{eq22}
	\begin{split}
		&\dot{u}=\frac{1}{2}\sum_{i=2}^{2\left(2n+1\right)}\left(u^2+v^2\right)^{2\left(2n+1\right)-i}\left(\left(u^2-v^2\right)H_{i,v}\left(u,v\right)-2uvH_{i,u}\left(u,v\right)\right),\\
		&\dot{v}=\frac{1}{2}\sum_{i=2}^{2\left(2n+1\right)}\left(u^2+v^2\right)^{2\left(2n+1\right)-i}\left(\left(u^2-v^2\right)H_{i,u}\left(u,v\right)+2uvH_{i,v}\left(u,v\right)\right),
	\end{split}
\end{equation}
where $H_i,\ i=2,3,\ldots,2m+1,2\left(m+1\right),2\left(m+2\right),\ldots,2\left(2n+1\right)$ are given in \eqref{eq21}.  By a similar analysis as {\bf Case 2} in the proof of Theorem \ref{th2}, we obtain the following statements:
\begin{itemize}
\item [(i)] The support of $\mathcal{F}_{8n+5-i}$ (see \eqref{eq19}) lies on the straight line $l_{8n+5-i}: x+y=2\left(4n+3\right)-i$ for $i=2,3,\ldots,2m+1,2\left(m+1\right),2\left(m+2\right),\ldots,2\left(2n+1\right)$.
\item [(ii)] If $i=2$, then the Newton diagram of $\mathcal{F}_{8n+3}$ has an exterior vertex $V_{2,2}=\left(0,4\left(2n+1\right)\right)$.
\item [(iii)] If $i$ is even with $i\geq4$ and $c_i\neq0$, then the vertices of the Newton diagram of $\mathcal{F}_{8n+5-i}$ are
$$V_{1,i}=\left(2\left(4n+3\right)-i,0\right)\quad\text{and}\quad V_{2,i}=\left(i,2\left(4n+3-i\right)\right).$$
Moreover, $V_{2,i}$ lies on the straight line $\ell_0:2x+y=2\left(4n+3\right)$.
\item [(iv)] If $i$ is odd with $3\leq i\leq2m+1$ and $b_{(i-1)/2}\neq0$, then the vertices of the Newton diagram of $\mathcal{F}_{8n+5-i}$ are
\begin{align}\label{eq26}
V_{1,i}=\left(8n+5-i,1\right)\quad\text{and}\quad V_{2,i}=\left(i-1,8n+7-2i\right).
\end{align}
Moreover, $V_{2,i}$ lies on the straight line $\ell_1:2x+y=8n+5$.
\end{itemize}

  \begin{figure}[H]
  	 	{\tiny
	\centering
	\begin{minipage}{0.42\linewidth}
		\centering
		\psfrag{0}{$V_{2,2(2n+1)}$}
		\psfrag{1}{$V_{1,2(2n+1)}$}
		\psfrag{2}{$l_{4n+3}$}
		\psfrag{3}{$l_{2\left(4n-m\right)+3}$}
		\psfrag{4}{$l_{2\left(4n-m+2\right)}$}
		\psfrag{5}{$V_{2,2}$}
		\psfrag{6}{$8n+5$}
		\psfrag{7}{$2\left(4n+3\right)$}
		\psfrag{8}{$l_{8n+3}$}
		\psfrag{9}{$\ell_1$}
		\psfrag{10}{$\ell_0$}
		\psfrag{11}{$V_{2,2m+1}$}
		\centerline{\includegraphics[width=1\textwidth]{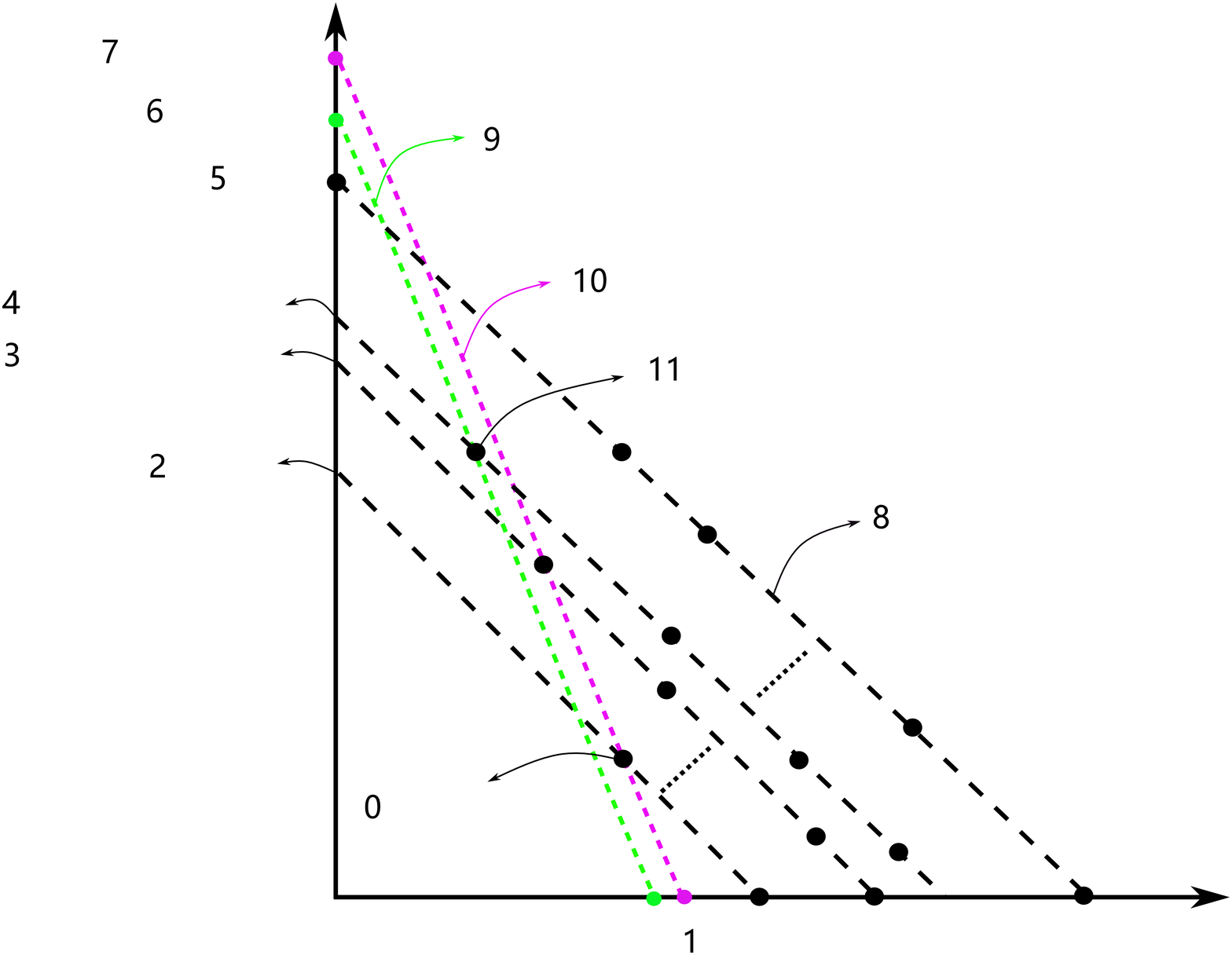}}
		\vspace{1ex}
		\centerline{(i)\;Configuration of the support of system \eqref{eq22}.}
	\end{minipage}
	\begin{minipage}{0.15\linewidth}
		\centering
		\centerline{\includegraphics[width=0.4\textwidth]{row.eps}}
	\end{minipage}
	\begin{minipage}{0.27\linewidth}
		\centering
			\psfrag{0}{$V_{2,2(2n+1)}$}
		\psfrag{1}{$V_{1,2(2n+1)}$}
		\psfrag{2}{$h_{4(2n+1)^2}^{\left(4n+1,2n+1\right)}$}
		\psfrag{3}{$h_{4\left(n+1\right)}^{\left(1,1\right)}$}
		\psfrag{5}{$V_{2,2}$}
		\centerline{\includegraphics[width=1\textwidth]{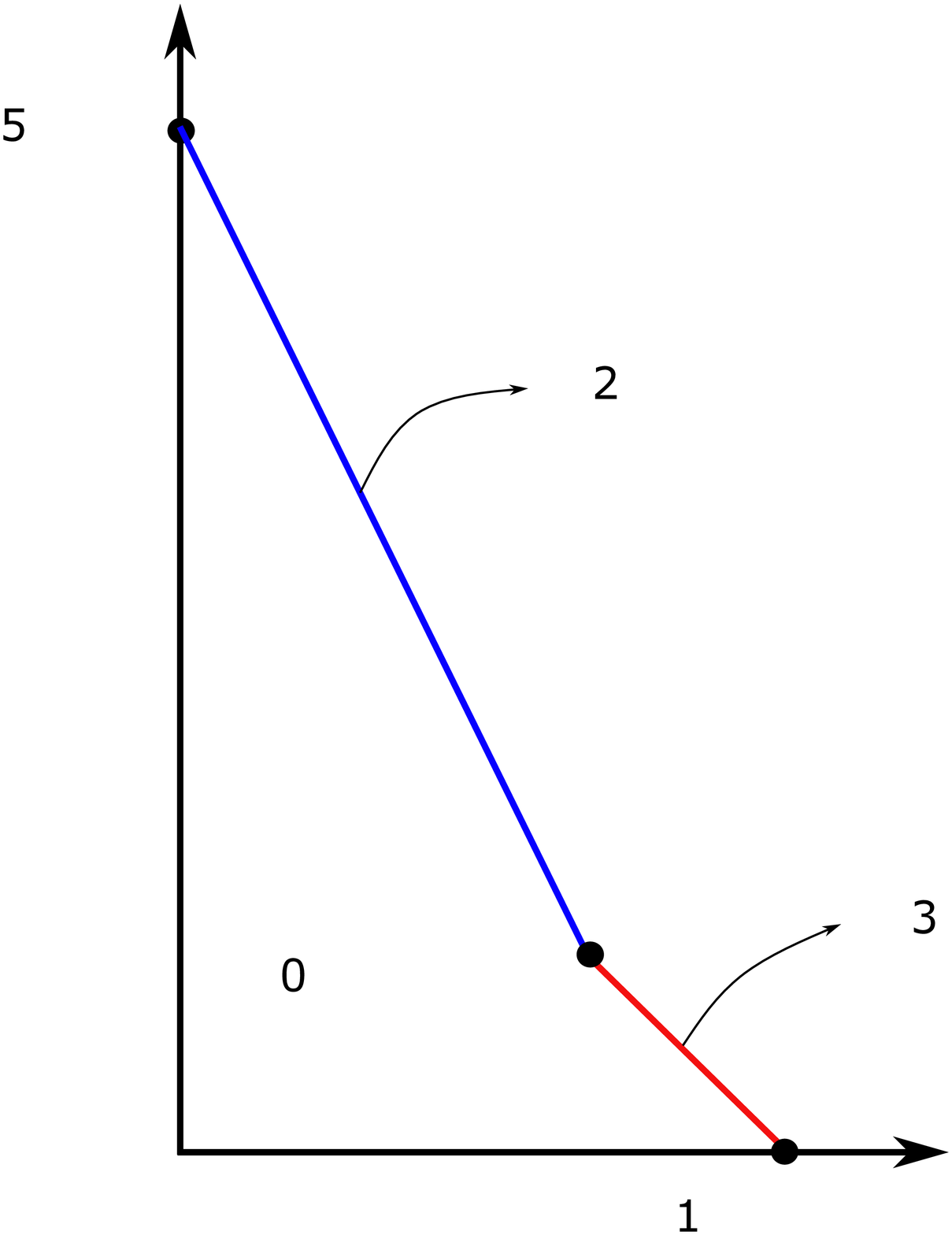}}
		\vspace{2ex}
		\centerline{(ii)\;Newton diagram.}
	\end{minipage}
	\caption{Configuration of the support of system \eqref{eq22} and Newton diagram of system \eqref{eq22}.}\label{fig3}}
\end{figure}

Therefore,  the configuration of the support for the vector field \eqref{eq22} is given in (i) of Figure \ref{fig3}. By the statement (d) of Theorem \ref{th-9}, the vertex $V_{2,i}$  is not a vertex of the Newton diagram of system \eqref{eq22} if $i$ is odd with $3\leq i\leq2m+1$ and $b_{(i-1)/2}\neq0$. We get that
the Newton diagram of system \eqref{eq22} has two exterior vertices $V_{2,2}=\left(0,4\left(2n+1\right)\right)$ and $V_{1,2(2n+1)}=\left(4\left(n+1\right),0\right)$, and an inner vertex $V_{2,2(2n+1)}=\left(2\left(2n+1\right),2\right)$, and two edges of type $\left(4n+1,2n+1\right)$ and $\left(1, 1\right)$,  see (ii) of  Figure \ref{fig3}.   The vector
fields associated to the vertices $V_{2,2}$, $V_{2,2(2n+1)}$ and $V_{1,2(2n+1)}$  are   $\left(-v^{8n+3},0\right)$, $\left(-2\left(2n+1\right)c_{2\left(2n+1\right)}u^{2\left(2n+1\right)}v,-\left(2n+1\right)c_{2\left(2n+1\right)}u^{4n+1}v^2\right)$ and $\left(0,\left(2n+1\right)c_{2\left(2n+1\right)}u^{4n+3}\right)$, respectively.

For the vector field \eqref{eq22}, the lowest-degree homogeneous terms of type $\left(1,1\right)$  are
\begin{align*}
	&\frac{1}{2}\mathcal{F}_{4n+3}=
	\left(
	\begin{array}{l}
		-2\left(2n+1\right)c_{2\left(2n+1\right)}u^{2\left(2n+1\right)}v\vspace{1ex}\\
	\left(2n+1\right)c_{2\left(2n+1\right)}\left(u^2-v^2\right)u^{4n+1}
	\end{array}
	\right)^T.
\end{align*}
Thus the Hamiltonian associated to the edge of type $\left(1, 1\right)$ is
$$h_{4\left(n+1\right)}^{\left(1,1\right)}=\frac{\left(2n+1\right)c_{2\left(2n+1\right)}}{4\left(n+1\right)}\left(u^2+v^2\right)u^{2\left(2n+1\right)}.$$

Next, we compute the Hamiltonian associated to the edge of type $\left(4n+1,2n+1\right)$. The line $\ell_2$ passing through $V_{2,2}$ and $V_{2,2(2n+1)}$ is $\left(4n+1\right)x+\left(2n+1\right)y=4\left(2n+1\right)^2$.  The lines $\ell_1$ and $\ell_2$ intersect at  the point  $\left(2n+1,4n+3\right)$.  If $\left(2n+1,4n+3\right)\in\text{supp}\left(b\left(\mathcal{X}\right)\right)$, by \eqref{eq26}, then
there exists an odd number $i$ such that $i-1=2n+1$, which is impossible. So,
$$\ell_2\cap\text{supp}\left(b\left(\mathcal{X}\right)\right)=\left\{V_{2,2},V_{2,2(2n+1)}\right\}.$$
The lowest-degree quasi-homogeneous terms of type $\left(4n+1,2n+1\right)$ for the vector field
\eqref{eq22} are
\begin{align*}
	&\left(
	\begin{array}{l}
		-v^{8n+3}-2\left(2n+1\right)c_{2\left(2n+1\right)}u^{2\left(2n+1\right)}v\vspace{1ex}\\
		-\left(2n+1\right)c_{2\left(2n+1\right)}u^{4n+1}v^2
	\end{array}
	\right)^T.
\end{align*}
It follows that the Hamiltonian associated to the  edge of type  $\left(4n+1,2n+1\right)$ is
$$h_{4(2n+1)^2}^{\left(4n+1,2n+1\right)}=\frac{1}{4(2n+1)}\left(c_{2\left(2n+1\right)}\left(u^{2n+1}\right)^{2}+\left(v^{4n+1}\right)^2\right)v^2$$
with $c_{2\left(2n+1\right)}>0$. It is obvious that the Hamiltonians  $h_{4(2n+1)^2}^{\left(4n+1,2n+1\right)}$ and $h_{4\left(n+1\right)}^{\left(1,1\right)}$ have no factor of the form $v^{t_1}-\lambda u^{t_2}$ with $\lambda\in\mathbb{R}\setminus\{0\}$. Using Theorem \ref{th-11}, the polynomial map $F$ is injective.
\end{proof}

\begin{proof}[\bf Proof of Example \ref{ex2}]
Let $\bar{f}\left(y\right)=\sum_{i=0}^{m_1}a_iy^{2i+1},\tilde{f}\left(x\right)=\sum_{i=0}^{m_2}b_ix^{2i+1},\bar{g}\left(y\right)=\sum_{i=0}^{m_3}c_iy^{2i+1}$ and $\tilde{g}\left(x\right)=\sum_{i=0}^{m_2}d_ix^{2i+1}$.
Then, the Hamiltonian of the vector field \eqref{eq-1} is	
\begin{align*} &H\left(x,y\right)=\frac{f^2+g^2}{2}=\frac{1}{2}\left(\bar{f}^2\left(y\right)+\bar{g}^2\left(y\right)+2\left(\bar{f}\left(y\right)\tilde{f}\left(x\right)-\bar{g}\left(y\right)\tilde{g}\left(x\right)\right)+\tilde{f}^2\left(x\right)+\tilde{g}^2\left(x\right)\right).
\end{align*}
Since $m_1>\max\left\{m_2,m_3\right\}$, we have that
$2\left(2m_1+1\right)=\max\left\{\deg \bar{f}^2,\deg\bar{g}^2\right\}>\max\big\{\deg\left(\bar{f}\tilde{f}\right),$ $\deg\left(\bar{g}\tilde{g}\right)\big\}=2\left(m_1+m_2+1\right)>2\left(2m_2+1\right)=\max\left\{\deg \tilde{f}^2,\deg\tilde{g}^2\right\}$.
 So, the Hamiltonian $H\left(x,y\right)$ can be represented in the form
\begin{small}
\begin{equation}\label{eq25}
\begin{split}
H\left(x,y\right)=&\frac{1}{2}\sum_{k=1}^{2m_1+1}H_{2k}\left(x,y\right)\\[1ex]
=&\frac{1}{2}\Big(a_{m_1}^2y^{2\left(2m_1+1\right)}\!+\!	H_{4m_1}\left(y\right)\!+\!\cdots\!+\!H_{2\left(m_1+m_2+2\right)}\left(y\right)\!+\!\alpha_{2\left(m_1+m_2+1\right)}y^{2\left(m_1+m_2+1\right)}\\[1ex]
&\!+\!2a_{m_1}b_{m_2}x^{2m_2+1}y^{2m_1+1}\!+\!H_{2\left(m_1+m_2\right)}\left(x,y\right)\!+\!\cdots\!+\!H_{4\left(m_2+1\right)}\left(x,y\right)
\!+\!\alpha_{2\left(2m_2+1\right)}y^{2\left(2m_2+1\right)}\\[1ex]
&\!+\!\sum\limits_{i+j=2m_2}\alpha_{i,j}x^{2i+1}y^{2j+1}\!+\!\left(b_{m_2}^2\!+\!d_{m_2}^2\right)x^{2\left(2m_2+1\right)}
\!+\!H_{4m_2}\left(x,y\right)\!+\!\cdots\!+\!H_2\left(x,y\right)\Big),
\end{split}
\end{equation}
\end{small}where $H_{2k}\left(x,y\right)$ is a homogeneous polynomial of degree $2k$ for $k=1,2,\ldots,2m_1+1$ and  $H_{2k}\left(x,y\right)$ is one of the following three types:\\
{\bf Type I}:\ $H_{2k}\left(x,y\right)=\alpha_{2k}y^{2k}$,\\[1ex]
{\bf Type II}:\ $H_{2k}\left(x,y\right)=\alpha_{2k}y^{2k}+\sum\limits_{i+j+1=k}\alpha_{i,j}x^{2i+1}y^{2j+1}$ with $2\left(m_2+1\right)\leq k\leq m_1+m_2+1$,\\[1ex]
{\bf Type III}:\ $H_{2k}\left(x,y\right)=\alpha_{2k}y^{2k}+\sum\limits_{i+j+1=k}\alpha_{i,j}x^{2i+1}y^{2j+1}+\beta_{2k}x^{2k}$ with $k\leq2m_2+1$.\\
Here $0\leq i\leq m_2$ and $0\leq j\leq m_1$.

Applying  the equation \eqref{eq-5}, the Bendixson compactification of $\mathcal{X}$ is given by
\begin{equation}\label{eq27}
	\begin{split}
		&\dot{u}=\frac{1}{2}\sum_{k=1}^{2m_1+1}\left(u^2+v^2\right)^{2\left(2m_1-k+1\right)}\left(\left(u^2-v^2\right)H_{2k,v}\left(u,v\right)-2uvH_{2k,u}\left(u,v\right)\right),\\
		&\dot{v}=\frac{1}{2}\sum_{k=1}^{2m_1+1}\left(u^2+v^2\right)^{2\left(2m_1-k+1\right)}\left(\left(u^2-v^2\right)H_{2k,u}\left(u,v\right)+2uvH_{2k,v}\left(u,v\right)\right),
	\end{split}
\end{equation}
where $H_{2k},\ k=1,2,\ldots,2m_1+1$ are given in \eqref{eq25}. Repeating the process of the proof of {\bf Case 2}  in Theorem \ref{th2}, we have the following results:
\begin{itemize}
	\item [(i)] The support of $\mathcal{F}_{2\left(4m_1-k\right)+5}$ (see \eqref{eq19}) lies on the straight line $l_{2\left(4m_1-k\right)+5}: x+y=2\left(4m_1-k+3\right)$ for $k=1,2,\ldots,2m_1+1$.
	\item [(ii)] If $H_{2k}\left(x,y\right)=\alpha_{2k}y^{2k}$ (i.e., {\bf Type I}) and $\alpha_{2k}\neq0$, then the vertices of the Newton diagram of $\mathcal{F}_{2\left(4m_1-k\right)+5}$ are
	$$V_{1,k}=\left(2\left(4m_1-2k+3\right),2k\right)\quad\text{and}\quad V_{2,k}=\left(0,2\left(4m_1-k+3\right)\right).$$
	Moreover, $V_{1,k}$ lies on the straight line $\ell_0:x+2y=2\left(4m_1+3\right)$.
		\item [(iii)] If $k=2m_2+1$, then $\beta_{2(2m_2+1)}=b_{m_2}^2\!+\!d_{m_2}^2>0$ and the Newton diagram of $\mathcal{F}_{4\left(2m_1-m_2\right)+3}$ has an exterior vertex $V_{1,2m_2+1}=\left(4\left(2m_1-m_2+1\right),0\right)$.
	\item [(iv)] If $H_{2k}\left(x,y\right)=\alpha_{2k}y^{2k}+\sum\limits_{i+j+1=k}\alpha_{i,j}x^{2i+1}y^{2j+1}$ with $0\leq i\leq m_2$ and $0\leq j\leq m_1$ (i.e., {\bf Type II}), then one of the vertices of the Newton diagram of $\mathcal{F}_{2\left(4m_1-k\right)+5}$ is
	$$V_{1,k}=\left(2\left(4m_1-j_k-k\right)+5,2j_k+1\right),$$
where
\begin{equation}\label{iv}
j_k=\min\left\{j\mid i+j+1=k,\ 0\leq i\leq m_2,\ 0\leq j\leq m_1,\ \alpha_{k-1-j,j}\neq0\right\},
\end{equation}
and the other vertex $V_{2,k}=\left(0,2\left(4m_1-k+3\right)\right)$ if $\alpha_{2k}\neq0$, otherwise
$$V_{2,k}=\left(2i_k+1,2\left(4m_1-i_k-k\right)+5\right)$$
where $i_k=\min\left\{i\mid i+j+1=k,\ 0\leq i\leq m_2,\ 0\leq j\leq m_1,\ \alpha_{i,k-1-i}\neq0\right\}.$
\end{itemize}

We remark  that  if $H_{2k}\left(x,y\right)$ has the form {\bf Type II}, then the vertex $V_{1,k}$ in (iv) is not the vertex of the Newton diagram of system \eqref{eq27} by the statement (d) of Theorem \ref{th-9}.  Consequently, the configuration of the support for the vector field \eqref{eq27} is described in (i) of Figure \ref{fig-2}. We  obtain that
the Newton diagram of system \eqref{eq27} has two exterior vertices $V_{2,2m_1+1}=\left(0,4\left(m_1+1\right)\right)$ and $V_{1,2m_2+1}=\left(4\left(2m_1-m_2+1\right),0\right)$, an inner vertex $V_{1,2m_1+1}=\left(2,2\left(2m_1+1\right)\right)$, and two edges of type $\left(1,1\right)$ and $\left(2m_1+1, 2\left(2m_1-m_2\right)+1\right)$, see (ii) of Figure \ref{fig-2}.  Moreover, the vector
fields associated to the vertices $V_{2,2m_1+1}$, $V_{1,2m_1+1}$ and $V_{1,2m_2+1}$  are   $\left(-\left(2m_1+1\right)a_{m_1}^2v^{4m_1+3},0\right)$, $ \left(\left(2m_1+1\right)a_{m_1}^2u^2v^{4m_1+1},2\left(2m_1+1\right)a_{m_1}^2uv^{2\left(2m_1+1\right)}\right)$ and $\left(0,\left(2m_2\!+\!1\right)\left(b_{m_2}^2\!+\!d_{m_2}^2\right)u^{4\left(2m_1-m_2\right)+3}\right)$, respectively.

 \begin{figure}[H]
	{\tiny
		\centering
		\begin{minipage}{0.42\linewidth}
			\centering
			\psfrag{0}{$4m_1+3$}
			\psfrag{1}{$V_{1,2m_1+1}$}
			\psfrag{2}{$V_{1,2m_2+1}$}
			\psfrag{3}{$V_{1,m_1+m_2+2}$}
			\psfrag{4}{$V_{2,2m_1+1}$}
			\psfrag{5}{$l_{2\left(3m_1-m_2\right)+1}$}
			\psfrag{6}{$l_{4\left(2m_1-m_2\right)+3}$}
			\psfrag{7}{$l_{8m_1+3}$}
			\psfrag{8}{$\ell_0$}
			\psfrag{9}{$2\left(4m_1+3\right)$}
			\psfrag{10}{$l$}
			\centerline{\includegraphics[width=1\textwidth]{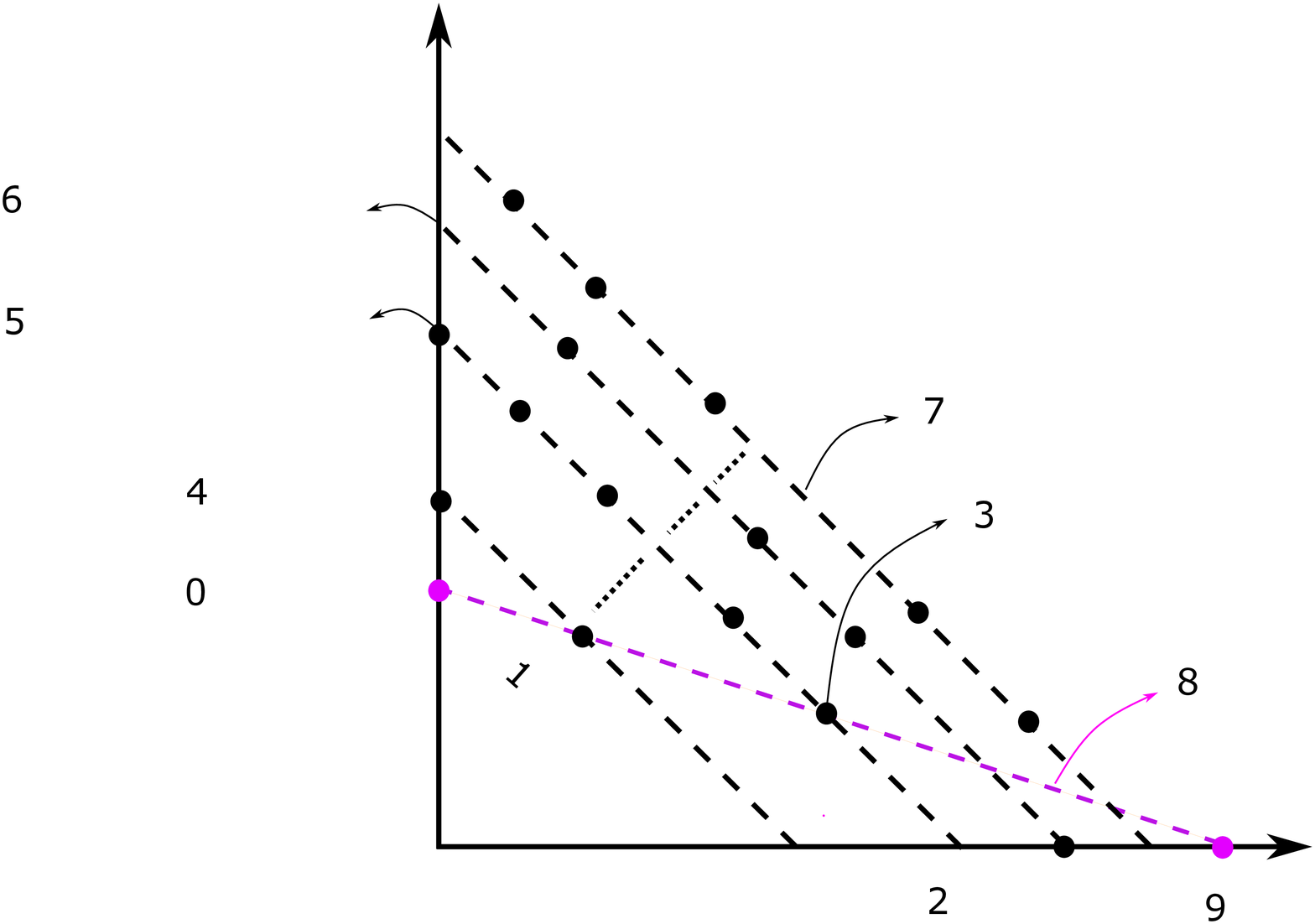}}
			\vspace{1ex}
			\centerline{(i)\;Configuration of the support of system \eqref{eq27}.}
		\end{minipage}
		\begin{minipage}{0.15\linewidth}
			\centering
			\centerline{\includegraphics[width=0.4\textwidth]{row.eps}}
		\end{minipage}
		\begin{minipage}{0.35\linewidth}
			\centering
				\psfrag{1}{$V_{1,2m_1+1}$}
			\psfrag{2}{$V_{1,2m_2+1}$}
			\psfrag{3}{$h_{4\left(m_1+1\right)}^{\left(1,1\right)}$}
			\psfrag{4}{$V_{2,2m_1+1}$}
			\psfrag{5}{$h_{4\left(2m_1+1\right) \left(2m_1-m_2+1\right)}^{\left(2m_1+1, 2\left(2m_1-m_2\right)+1\right)}$}
			\psfrag{6}{$V_{1,m_1+m_2+1}$}
			\centerline{\includegraphics[width=1\textwidth]{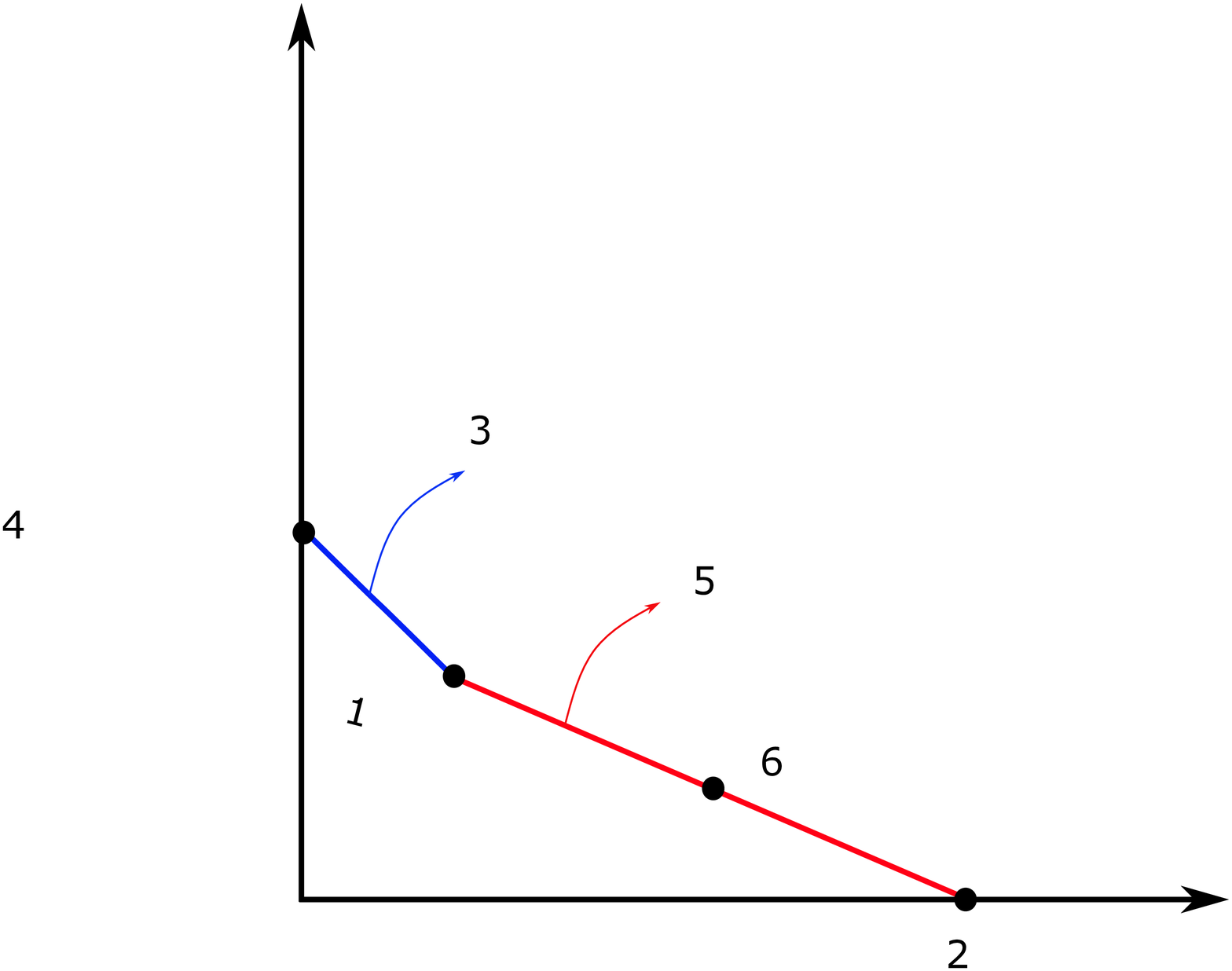}}
			\vspace{2ex}
			\centerline{(ii)\;Newton diagram.}
		\end{minipage}
		\caption{Configuration of the support of system \eqref{eq27} and Newton diagram for system \eqref{eq27}.}\label{fig-2}}
\end{figure}

The lowest-degree homogeneous terms of type $\left(1,1\right)$  are given by
\begin{align*}
	&\frac{1}{2}\mathcal{F}_{4m_1+3}=
	\left(
	\begin{array}{l}
	\left(2m_1+1\right)a_{m_1}^2\left(u^2-v^2\right)v^{4m_1+1}\vspace{1ex}\\
	2\left(2m_1+1\right)a_{m_1}^2uv^{2\left(2m_1+1\right)}
	\end{array}
	\right)^T.
\end{align*}
Thus the Hamiltonian associated to the edge of type $\left(1, 1\right)$ is
$$h_{4\left(m_1+1\right)}^{\left(1,1\right)}=\frac{2m_1+1}{4\left(m_1+1\right)}a_{m_1}^2\left(u^2+v^2\right)v^{2\left(2m_1+1\right)}.$$

Next, we calculate the Hamiltonian associated to the edge of type $\left(2m_1+1, 2\left(2m_1-m_2\right)+1\right)$. Notice that the edge of type $\left(2m_1+1, 2\left(2m_1-m_2\right)+1\right)$ lies on the straight line $\ell_1:\left(2m_1+1\right)x+\left(2\left(2m_1-m_2\right)+1\right)y=4\left(2m_1-m_2+1\right)\left(2m_1+1\right)$.  Let $V_{1,k}$ and $V_{2, k}$ be the vertices of the Newton diagram of $\mathcal{F}_{2\left(4m_1-k\right)+5}$, one has
$$\ell_1\cap\text{supp}\left(b\left(\mathcal{X}\right)\right)\subset\bigcup_{k=1}^{2m_1+1}\left\{V_{1,k},V_{2, k}\right\}.$$
Based on the above discussions and (i) of Figure \ref{fig-2}, we know that if $H_{2k}\left(x,y\right)$ has the expression of {\bf Type I} or {\bf Type III}, then the vertices of the Newton diagram of $\mathcal{F}_{2\left(4m_1-k\right)+5}$ do not belong to the  set
$$\Upsilon=\ell_1\cap\text{supp}\left(b\left(\mathcal{X}\right)\right)\setminus\left\{V_{1,2m_1+1},V_{1,2m_2+1}\right\}.$$
This tells us that if the vertices of the Newton diagram of $\mathcal{F}_{2\left(4m_1-k\right)+5}$ belong to $\Upsilon$, then $H_{2k}\left(x,y\right)$ must be the form of {\bf Type II} and $V_{1,k}=\left(2\left(4m_1-j_k-k\right)+5,2j_k+1\right)\in\Upsilon$. Thereby,
\begin{equation}\label{jk}
\left(2m_1\!+\!1\right)\left(2\left(4m_1\!-\!j_k\!-\!k\right)\!+\!5\right)\!+\!\left(2\left(2m_1\!-\!m_2\right)\!+\!1\right)\left(2j_k\!+\!1\right)\!
=\!4\left(2m_1\!-\!m_2\!+\!1\right)\left(2m_1\!+\!1\right).
\end{equation}
To solve all $j_k$ and $k$ satisfying the equation \eqref{jk}, we let the bivariate function	
$$G(j_k,k)=\left(2m_1\!+\!1\right)\left(2\left(4m_1\!-\!j_k\!-\!k\right)\!+\!5\right)\!+\!\left(2\left(2m_1\!-\!m_2\right)\!+\!1\right)\left(2j_k\!+\!1\right)\!
-\!4\left(2m_1\!-\!m_2\!+\!1\right)\left(2m_1\!+\!1\right),$$
which is defined in the region $\Omega=\{(j_k,k)\mid0\leq j_k\leq m_1$, $j_k+1\leq k\leq j_k+m_2+1\}$, see \eqref{iv}. It is easy to verify that
\begin{equation*}\begin{split}
&G_{j_k}(j_k,k)=4(m_1-m_2)>0,\quad G_{k}(j_k,k)=-2(2m_1+1)<0,\\
&G(0,k)=2(-(2m_1+1)k+3m_1+m_2+4m_1m_2+1)\geq G(0,m_2+1)=2m_1(2m_2+1)>0,\\
&G(m_1,k)=2(2m_1+1)(m_1+m_2+1-k)\geq0,\\
&G(j_k,j_k+1)=2(-(2m_2+1)j_k+m_1+m_2+4m_1m_2)\geq G(m_1,m_1+1)=2m_2(2m_1+1)>0,\\
&G(j_k,j_k+m_2+1)=2(2m_2+1)(m_1-j_k)\geq0.
\end{split}
\end{equation*}
Thus the function $G(j_k,k)$ achieves its minimum $0$ when $j_k=m_1$ and $k=m_1+m_2+1$. And since $\alpha_{m_2,m_1}=2a_{m_1}b_{m_2}>0$, we have $\Upsilon=\left\{V_{1,m_1+m_2+1}\right\}=\left\{\left(2\left(2m_1-m_2\right)+3,2m_1+1\right)\right\}$ and the vector
field associated to $V_{1,m_1+m_2+1}$ is
$$\left(\left(2 m_1+1\right) a_{m_1} b_{m_2} u^{2\left(2m_1-m_2\right)+3} v^{2 m_1},\left(2\left(2m_1+m_2\right)+3\right) a_{m_1} b_{m_2} u^{2\left(2m_1-m_2+1\right)} v^{2 m_1+1}\right).$$	
Hence, the lowest-degree quasi-homogeneous terms of type $\left(2m_1+1, 2\left(2m_1-m_2\right)+1\right)$ for the vector field
\eqref{eq27} are
\begin{scriptsize}
\begin{align*}
	&\left(
	\begin{array}{l}
		\left(2m_1+1\right)a_{m_1}u^2v^{2m_1}\left(a_{m_1} v^{2m_1+1}+b_{m_2}u^{2\left(2m_1- m_2\right)+1}\right)\vspace{1ex}\\
		\left(4m_1\!+\!2m_2\!+\!3\right)a_{m_1} b_{m_2} u^{2\left(2m_1\!-\!m_2\!+\!1\right)}v^{2m_1+1}\!+\!2\left(2 m_1\!+\!1\right)a_{m_1}^2uv^{2\left(2m_1+1\right)}\!+\!\left(2m_2+1\right) \left(b_{m_2}^2\!+\!d_{m_2}^2\right)u^{4\left(2m_1-m_2\right)+3}
	\end{array}
	\right)^T,
\end{align*}
\end{scriptsize}with $a_{m_1}>0, b_{m_2}>0$ and $d_{m_2}>0$. Moreover, the Hamiltonian associated to the  edge of type  $\left(2m_1+1, 2\left(2m_1-m_2\right)+1\right)$ is
\begin{small}
\begin{equation*}
	h_{4\left(2m_1+1\right) \left(2m_1-m_2+1\right)}^{\left(2m_1+1, 2\left(2m_1-m_2\right)+1\right)}\!=\!\frac{2m_2+1}{4\left(2m_1-m_2+1\right)}\left(\left(a_{m_1}v^{2m_1+1}
\!+\!b_{m_2}u^{2\left(2m_1-m_2\right)+1}\right)^2\!+\!\left(d_{m_2}u^{2\left(2m_1-m_2\right)+1}\right)^2\right)u^2
\end{equation*}
\end{small}with $a_{m_1}>0,\ b_{m_2}>0$ and $d_{m_2}>0$.

It is not difficult to check that the Hamiltonians $h_{4\left(m_1+1\right)}^{\left(1,1\right)}$ and  $h_{4\left(2m_1+1\right) \left(2m_1-m_2+1\right)}^{\left(2m_1+1, 2\left(2m_1-m_2\right)+1\right)}$ have no factor of the form $v^{t_1}-\lambda u^{t_2}$ with $\lambda\in\mathbb{R}\setminus\{0\}$. By Theorem \ref{th-11}, the polynomial map $F$ is injective.

\end{proof}

\section*{Acknowledgments}

%The first author would like to express his heartful gratitude to Professor Yulin Zhao from Sun Yat-sen University, who led him into this field.

This research is supported by the National Natural Science Foundation of China (No.11801582, No. 11790273 and No. 11971495)  and Guangdong Basic and Applied Basic Research Foundation (No. 2019A1515011239).

%\bibliographystyle{unsrt}
%\bibliographystyle{elsarticle-num}
%\bibliographystyle{elsarticle-num-names}
%\bibliographystyle{elsarticle-harv}
%\biboptions{square,numbers,sort&compress}
%\bibliographystyle{plain}
%\bibliographystyle{siam}
%\bibliography{ref}

\end{document}